\newcommand{\R} {\mathbb{R}}
\newcommand{\eps}{\varepsilon}
\numberwithin{equation}{section}
\newtheorem{teo}{Theorem}[section]
\newtheorem{lem}[teo]{Lemma}
\newtheorem{cor}[teo]{Corolary}
\newtheorem{prop}[teo]{Proposition}
\newtheorem{re}[teo]{Remark}
\begin{document}

\title[Estimates on the distance of inertial manifolds]{Estimates on the distance of inertial manifolds}
\thanks{ Math Subject Classification (2010): 35B42, 35K90}
\author[J. M. Arrieta]{Jos\'{e} M. Arrieta$^{*, \dag}$}
\thanks{$^*$ Corresponding author:  Jos\'e M. Arrieta,  Departamento de Matem\'atica Aplicada, Facultad de Matem\'aticas, 
Universidad Complutense de Madrid, 28040  Madrid, Spain. e-mail: arrieta@mat.ucm.es}
\thanks{$^\dag$ Partially
supported by grants MTM2009-07540 and MTM2012-31298 (MINECO), Spain and Grupo de Investigaci\'on-UCM 920894 ``Comportamiento Asint\'otico y Din\'amica de Ecuaciones Diferenciales-CADEDIF''.}
\address[Jos\'e M. Arrieta]{Departamento de Matem\'atica Aplicada,
Facultad de Ma\-te\-m\'a\-ti\-cas, Universidad Complutense de
Madrid, 28040 Madrid, Spain.} \email{arrieta@mat.ucm.es}

\author[E. Santamar\'ia]{Esperanza Santamar\'ia$^{\dag}$}
\address[Esperanza Santamar\'ia]{Departamento de Matem\'atica Aplicada,
Facultad de Ma\-te\-m\'a\-ti\-cas, Universidad Complutense de
Madrid, 28040 Madrid, Spain.} \email{esperanza.sm@mat.ucm.es}

\date{}

\begin{abstract}
In this paper we obtain estimates on the distance of the inertial manifolds for dynamical systems generated by 
evolutionary parabolic type equations. We consider the situation where the systems are defined in different phase space and we estimate the distance in terms of the distance of the resolvent operators of the corresponding elliptic operators and the distance of the nonlinearities of the equations.
\end{abstract}

\maketitle
\numberwithin{equation}{section}
\newtheorem{theorem}{Theorem}[section]
\newtheorem{lemma}[theorem]{Lemma}
\newtheorem{corollary}[theorem]{Corollary}
\newtheorem{proposition}[theorem]{Proposition}
\newtheorem{remark}[theorem]{Remark}
\allowdisplaybreaks

\maketitle

%\rightline {Dedicado a la memoria de Jos\'e Real. }

\section{Introduction}
Many systems coming from Partial Differential Equations of evolutionary type, enjoy the property of having a finite dimensional manifold which is smooth, invariant and exponentially attracting and carries over all the asymptotic dynamic information of the system. All bounded invariant sets (equilibria, periodic orbits, connecting orbits, attractors, etc) lie in this invariant manifold. The 
existence of these manifolds is proved once we guarantee that the associated linear elliptic operator of the system has large enough gaps in the spectrum and it is obtained through an appropriate fixed point argument.    Proving that we have 
these gaps is one of the major difficulties of the theory, but still there is a class of equations (for instance, one dimensional parabolic equations) for which these inertial manifolds exist and once they exist, we can reduce the system to a finite dimensional one, for which more techniques are available. We refer to \cite{Bates-Lu-Zeng1998, Sell&You} for general references on the theory of Inertial manifolds. See also \cite{JamesRobinson} for an accessible introduction to the theory. These inertial manifolds are smooth, see \cite{ChaoLuSell}. We also refer to \cite{Henry1, Hale, B&V2,Sell&You,LibroAlexandre,Cholewa}
for general references on dynamics of evolutionary equations. 

Due to the relevance of these manifolds, the analysis of its behavior under perturbations is very important.  Identifying the kind of perturbations allowed so that the inertial manifold persists and estimating the distance of  the inertial manifolds is an important task which have implications in the analysis of the dynamics of the equations.  One of the first examples in which an analysis of the persistence of inertial manifolds was carried over was in \cite{Hale&Raugel3}, where the 
dynamics of a parabolic equation in a thin domain is analyzed.  This paper has been one of the main motivations for our work. In the case treated in \cite{Hale&Raugel3}, the limit equation is one-dimensional for which the gap condition is satisfied since the elliptic operator is of Sturm-Liouville type and spectral gaps are known to exist.  The inertial manifold of the limiting one-dimensional problem is proved and after an analysis of the continuity of the spectrum under this perturbation, the inertial manifold is lifted to the perturbed 2-dimensional problem in the thin domain. An estimate of the distance of the inertial manifolds is provided, although it is not as sharp as the one we obtain in this paper.  Also, some general results on persistence can be found in \cite{Bates-Lu-Zeng1998}, and  also in \cite{Jones}, where the results are more focused on the numerical approximations of the equations. More recently some results on the behavior of these manifolds under perturbation of the domain  have appeared \cite{Ng2013,Varchon2012}, although they do not provide estimates on the distance of the manifolds. 

In this work we provide estimates on the distance between the inertial manifold of a system and the inertial manifold of a perturbation of it.  The systems may have different phase space (so we may apply these techniques to domain perturbation problems)  and the distance is estimated in terms of two parameters only: the distance of the resolvent operators of the elliptic part and the distance of the nonlinearities of the equations, see Theorem \ref{distaciavariedadesinerciales}.   

We describe now the contents of the paper. 

In Section \ref{setting} we introduce the notation, the main hypothesis that we will impose, {\bf (H1)} related to the convergence of  the resolvent operators and {\bf(H2)} related to the convergence of the nonlinearities.  We also state the main result  of the paper, Theorem \ref{distaciavariedadesinerciales}.

In Section  \ref{linear} we analyze the behavior of the linear part of the equations.  We show the convergence of the spectrum once the resolvent convergence is imposed and obtain different estimates on the linear problems.

In Section \ref{existence} we obtain the existence of the inertial manifolds. To accomplish this task we apply the results from \cite{Sell&You}. 

In Section \ref{distance} using the implicit definition of the inertial manifolds (given as a fixed point of an appropriate functional) 
and with the estimates of Section \ref{linear} we prove the main result. 

We have also included the short Section \ref{comment} with some final comments about the results of the paper.

\section{Setting of the problem and main results}
\label{setting}

Let $A_0$ be a self-adjoint positive linear operator on a separable real Hilbert space $X_0$ with domain $D(A_0)$, that is $A_0:D(A_0)\subset X_0\rightarrow X_0.$  We denote by $X_0^\alpha$, with $\alpha\in[0, 1]$, the fractional power spaces associated to the operator $A_0$ and $\|\cdot \|_{\alpha}$ its norm, defined in the usual way,  see for instance \cite{Henry1, Cholewa}.

We consider the following evolutionary problem,
\begin{equation}\label{problemalimite}
(P_0)\left\{
\begin{array}{r l }
u^0_t+A_0u^0&=F_0(u^0),\\
u^0(0)\in X^\alpha_0,
\end{array}
\right.
\end{equation}
with $F_0: X^\alpha_0\rightarrow X_0$ certain nonlinearity guaranteeing that we have global existence of solutions. 

We also consider a perturbed problem,
\begin{equation}\label{problemaperturbado}
(P_\varepsilon)\left\{
\begin{array}{r l }
u^\varepsilon_t+A_\varepsilon u^\varepsilon&=F_\varepsilon(u^\varepsilon),\qquad 0<\varepsilon\leq \eps_0\\
u^\varepsilon(0)\in X^\alpha_\varepsilon,
\end{array}
\right.
\end{equation}
where $A_\varepsilon$ is also a self-adjoint positive linear operator on a separable real Hilbert space $X_\varepsilon$,  that is 
$A_\varepsilon: D(A_\varepsilon)=X^1_\varepsilon\subset X_\varepsilon\rightarrow X_\varepsilon,$
and the nonlinear term $F_\eps:X_\eps^\alpha\to X_\eps$ is another nonlinearity guaranteeing  also global existence of solutions of \eqref{problemaperturbado}.   We will impose appropriate  hypotheses on $F_\eps$ and $A_\eps$ so 
such that problem $(P_\varepsilon)$ converges to $(P_0)$ as $\varepsilon$ tends to $0$ in some sense. 

Since our aim is to compare different aspects about the asymptotic dynamics of both problems, (\ref{problemalimite}) and (\ref{problemaperturbado}) and these dynamics live in different functional spaces $X_0$, and $X_\eps$, we will 
need to compare functions from $X_0$ and $X_\varepsilon$, ($X_0^\alpha$ and $X_\varepsilon^\alpha$, respectively, with $\alpha\in[0, 1)$ fixed above). We refer to \cite{Car-Pis}  for a general reference where comparison of functions, operators (and their spectrum) defined in different spaces are analyzed, specially for problems related to asymptotic dynamics. See also \cite{Arrieta&Carvalho, dumbel1} for similar approaches to particular perturbation problems.

We assume the existence of linear continuous operators, $E$ and $M$, such that,
$$E: X_0\rightarrow X_\varepsilon,\qquad\textrm{and}\quad M: X_\varepsilon\rightarrow X_0,$$
and,
$$E_{\mid_{X^\alpha_0}}: X_0^\alpha\rightarrow X_\varepsilon^\alpha, \qquad\textrm{and}\quad M_{\mid_{X_\varepsilon^\alpha}}: X_\varepsilon^\alpha\rightarrow X_0^\alpha.$$
Although these operators depend on $\eps$ we will not make explicit this dependence.  We will assume they are bounded uniform in $\varepsilon$ and without loss of generality we will assume
\begin{equation}\label{cotaextensionproyeccion}
\|E\|_{\mathcal{L}(X_0, X_\varepsilon)}, \|M\|_{\mathcal{L}(X_\varepsilon, X_0)}\leq 2,\qquad \|E\|_{\mathcal{L}(X^\alpha_0, X^\alpha_\varepsilon)}, \|M\|_{\mathcal{L}(X^\alpha_\varepsilon, X^\alpha_0)}\leq 2.
\end{equation}
We also assume these operators satisfy the following properties,
\begin{equation}\label{propiedadesextensionproyeccion}
M\circ E= I,\qquad \|Eu_0\|_{X_\varepsilon}\rightarrow \|u_0\|_{X_0}\quad\textrm{for}\quad u_0\in X_0.
\end{equation}
In particular, the first statement in \eqref{propiedadesextensionproyeccion} implies that $E$ is injective and $M$ is surjective.

We will also assume that  the family of operators $A_\varepsilon$, for $0\leq \eps\leq \eps_0$,  have compact resolvent, that is, the resolvent operators are compact for all $\lambda\in\rho(A_\eps)$  where $\rho(A_\varepsilon)$
is the resolvent set of $A_\varepsilon$.
This fact, together with the fact that the operators are selfadjoint,  implies that its spectrum is discrete real and consists only of eigenvalues, each one with finite multiplicity. Moreover, the fact that $A_\varepsilon$, $0\leq \varepsilon\leq \eps_0$, is positive implies that its spectrum is positive. So, we denote by $\sigma(A_\varepsilon)$,  the spectrum of the operator $A_\varepsilon$,  with, 
$$\sigma(A_\varepsilon)=\{\lambda_n^\varepsilon\}_{n=1}^\infty,\qquad\textrm{ and}\quad 0<c\leq\lambda_1^\varepsilon\leq\lambda_2^\varepsilon\leq...\leq\lambda_n^\varepsilon\leq...$$
and we also denote by $\{\varphi_i^\varepsilon\}_{i=1}^\infty$ an associated orthonormal family of eigenfunctions. Observe that the requirement of the operators $A_\eps$ being positive can be relaxed to requiring that they are all bounded from below uniformly in the parameter $\epsilon$. We can always consider the modified operators $A_\eps+cI$  with $c$ a large enough constant to make the modified operators positive.  The nonlinear equations \eqref{problemalimite} and \eqref{problemaperturbado} would have to be rewritten accordingly.  

\par\bigskip 

With respect to the relation between both operators, $A_0$ and $A_\eps$ we will assume the following hypothesis

\vspace{0.5cm}
{\sl \paragraph{\textbf{(H1).}}  With $\alpha$ the exponent from problems (\ref{problemalimite}) and (\ref{problemaperturbado}), we have
\begin{equation}\label{H1equation}
\|A_\varepsilon^{-1}- EA_0^{-1}M\|_{\mathcal{L}(X_\varepsilon, X_\varepsilon^\alpha)}\to 0\quad \hbox{ as } \eps\to 0.
\end{equation}
}
\par\bigskip 
Notice in particular that from \eqref{H1equation} we also have that  $\|A_\varepsilon^{-1}E- EA_0^{-1}\|_{\mathcal{L}(X_0, X_\varepsilon^\alpha)}\to 0$ as $\eps\to 0$.  Let us define $\tau(\eps)$ as an increasing function of $\eps$ such that 
\begin{equation}\label{definition-tau}
\|A_\varepsilon^{-1}E- EA_0^{-1}\|_{\mathcal{L}(X_0, X_\varepsilon^\alpha)}\leq \tau(\eps).
\end{equation}

\par\bigskip

With respect to the nonlinearities $F_0$ and $F_\eps$, 
  \par\bigskip
{\sl 
\paragraph{\textbf{(H2).}} We assume that the nonlinear terms $F_\varepsilon: X^\alpha_\varepsilon\rightarrow X_\varepsilon$ for $0\leq \eps\leq \eps_0$, satisfy:

\begin{enumerate}
\item[(a)]  They are uniformly bounded, that is, there exists a constant $C_F>0$ independent of $\varepsilon$ such that,
$$\|F_\varepsilon\|_{L^\infty(X_\varepsilon^\alpha, X_\varepsilon)}\leq C_F.$$
\item[(b)] They are globally Lipschitz on $X^\alpha_\varepsilon$ with a uniform Lipstichz constant $L_F$, that is,
\begin{equation}\label{LipschitzFepsilon}
\|F_\varepsilon(u)- F_\varepsilon(v)\|_{X_\varepsilon}\leq L_F\|u-v\|_{X_\varepsilon^\alpha}. 
\end{equation}

\item[(c)] They have a uniformly bounded support in $\varepsilon\geq 0$: there exists $R>0$ such that 
$$Supp F_\varepsilon\subset D_{R}=\{u_\varepsilon\in X_\varepsilon^\alpha: \|u_\varepsilon\|_{X_\varepsilon^\alpha}\leq R\}.$$

\item[(d)]  $F_\eps$ approaches $F_0$ in the following sense,
\begin{equation}\label{estimacionefes}
\sup_{u_0\in X^\alpha_0}\|F_\varepsilon (Eu_0)-EF_0(u_0)\|_{X_\varepsilon}=\rho(\varepsilon),
\end{equation}
and $\rho(\varepsilon)\rightarrow 0$ as $\varepsilon\rightarrow 0$.

\end{enumerate} 
}

\par\bigskip  As we will see below, the convergence of the resolvent operators given by hypothesis {\bf (H1)} guarantees the spectral convergence of the operators, that is, the convergence of the eigenvalues and the eigenfunctions (or eigenprojections).  This implies in particular that if we have a gap on the eigenvalues of $A_0$, we will also have, for $\eps$ small enough a similar gap for the eigenvalues of $A_\eps$.  
This fact, together with the uniform estimates on the nonlinerities $F_\eps$ given by hypothesis {\bf(H2)},  guarantees that
we may construct inertial manifolds of the same dimension for all $0\leq \eps\leq \eps_0$.
We will follow the Lyapunov-Perron method, as developed in \cite{Sell&You} to  obtain these inertial manifolds $\mathcal{M}_\varepsilon$, $0\leq\varepsilon\leq\varepsilon_0$. As a matter of fact,  consider $m\in\mathbb{N}$ such that $\lambda_m^0<\lambda_{m+1}^0$ and denote by $\mathbf{P}_{\mathbf{m}}^{\bm\varepsilon}$ the canonical orthogonal projection onto the eigenfunctions, $\{\varphi^\varepsilon_i\}_{i=1}^m$, corresponding to the first $m$ eigenvalues of the operator $A_\varepsilon $, $0\leq\varepsilon\leq\varepsilon_0$ and $\mathbf{Q}^{\bm\varepsilon}_{\mathbf{m}}$ its orthogonal complement, see (\ref{projectionP}) and (\ref{projectionP0}). For technical reasons, we express any element belonging to the linear subspace $\mathbf{P}_{\mathbf{m}}^{\bm\varepsilon}(X_\varepsilon)$ as a linear combination of the elements of the following basis
$$\{\mathbf{P}_{\mathbf{m}}^{\bm\varepsilon}(E\varphi^0_1), \mathbf{P}_{\mathbf{m}}^{\bm\varepsilon}(E\varphi^0_2), ...,\mathbf{P}_{\mathbf{m}}^{\bm\varepsilon}(E\varphi^0_m)\},\qquad\textrm{for}\quad 0\leq\varepsilon\leq\varepsilon_0,$$
with $\{\varphi^0_i\}_{i=1}^m$ the eigenfunctions related to the first $m$ eigenvalues of $A_0$, which will be seen below that is a basis in $\mathbf{P}_{\mathbf{m}}^{\bm\varepsilon}(X_\varepsilon)$ and 
in $\mathbf{P}_{\mathbf{m}}^{\bm\varepsilon}(X_\varepsilon^\alpha)$.  We will denote by $\psi_i^\eps=\mathbf{ P_m^\eps}(E\varphi_i^0)$.

Let us denote by $j_\varepsilon$ the isomorphism from $ \mathbf{P}_{\mathbf{m}}^{\bm\varepsilon}(X_\varepsilon)=[\psi_1^\varepsilon, ..., \psi_m^\varepsilon]$ onto $\mathbb{R}^m$, that gives us the coordinates of each vector. That is,

\begin{equation}\label{definition-jeps}
\begin{array}{rl}
j_\varepsilon:\mathbf{P}_{\mathbf{m}}^{\bm\varepsilon}(X_\varepsilon)&\longrightarrow \mathbb{R}^m, \\
w_\varepsilon&\longmapsto\bar{p},
\end{array}
\end{equation}
where $w_\varepsilon=\sum^m_{i=1} p_i\psi^\varepsilon_i$ and $\bar{p}=(p_1, ..., p_m)$.

We denote by $|\cdot|$ the usual norm in $\mathbb{R}^m$, 
\begin{equation}\label{normacero}
|\bar{p}|=\left(\sum_{i=1}^mp_i^2\right)^{\frac{1}{2}},
\end{equation}
and by $|\cdot|_\alpha$ the following one,
\begin{equation}\label{normaalpha}
|\bar{p}|_\alpha=\left(\sum_{i=1}^mp_i^2(\lambda_i^\varepsilon)^{2\alpha}\right)^{\frac{1}{2}}.
\end{equation}
\vspace{0.4cm}
We consider the spaces $(\mathbb{R}^m, |\cdot|)$ and $(\mathbb{R}^m, |\cdot|_\alpha)$, that is, $\mathbb{R}^m$ with the norm $|\cdot|$ and $|\cdot|_\alpha$, respectively, and notice that for $w_0=\sum^m_{i=1} p_i\psi^0_i$ and $0\leq\alpha<1$ we have that,
\begin{equation}\label{normajepsilon}
\|w_0\|_{X^\alpha_0}=|j_0(w_0)|_\alpha.
\end{equation}

\par\bigskip 
As we mentioned in the introduction, we are looking for inertial manifolds for systems \eqref{problemalimite} and \eqref{problemaperturbado}. That is, finite dimensional manifolds which are smooth, invariant and exponentially attracting and carry over all the asymptotic dynamic information of the systems. These manifolds will be obtained as graphs of appropriate functions.  This motivates the introduction of the set 
$\mathcal{F}_L$,
$$\mathcal{F}_L=\{ \Phi_\varepsilon:\mathbb{R}^m\rightarrow\mathbf{Q}_{\mathbf{m}}^{\bm\varepsilon}(X^\alpha_\varepsilon),\quad\textrm{such that}\quad \textrm{supp } \Phi_\varepsilon\subset B_R\quad \textrm{and}\quad$$
$$\quad \|\Phi_\varepsilon(\bar{p}^1)-\Phi_\varepsilon(\bar{p}^2)\|_{X^\alpha_\varepsilon}\leq L|\bar{p}^1-\bar{p}^2|_\alpha \quad\bar{p}^1,\bar{p}^2\in\mathbb{R}^m \}.$$ 
Then we can show the following result.
\begin{prop}\label{existenciavariedadinercial}
Let hypotheses {\bf (H1)} and {\bf (H2)} be satisfied. Assume also that $m\geq 1$ is such that,
\begin{equation}\label{CondicionAutovaloresFuerte0}
\lambda_{m+1}^0-\lambda_m^0\geq 12L_F\left[(\lambda_m^0)^\alpha+(\lambda_{m+1}^0)^\alpha\right],
\end{equation}
and
\begin{equation}\label{autovalorgrande0}
(\lambda_m^0)^{1-\alpha}\geq 24L_F(1-\alpha)^{-1}.
\end{equation}
Then, there exist $L<1$ and $\varepsilon_0>0$ such that for all $0\leq\varepsilon\leq\varepsilon_0$ there exists an inertial manifold $\mathcal{M}_\varepsilon$ for (\ref{problemalimite}) and (\ref{problemaperturbado}), given by the ``graph'' of a function $\Phi_\varepsilon\in\mathcal{F}_L$. 
\end{prop}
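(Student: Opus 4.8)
The plan is to obtain each inertial manifold $\mathcal{M}_\varepsilon$ as the graph of a fixed point of the Lyapunov--Perron operator associated with $(P_\varepsilon)$, working in coordinates on the finite-dimensional part via the isomorphism $j_\varepsilon$ so that all the manifolds live over the common space $\mathbb{R}^m$. First I would record the consequences of \textbf{(H1)}: by the spectral convergence that follows from resolvent convergence (to be proved in Section \ref{linear}), for $\varepsilon$ small enough the gap condition \eqref{CondicionAutovaloresFuerte0} and the lower bound \eqref{autovalorgrande0}, which are assumed for $m$ at $\varepsilon=0$, persist in a slightly weakened but still effective form for $A_\varepsilon$, say with the constants $12,24$ replaced by $11,22$; this is where one uses that $\lambda_m^0<\lambda_{m+1}^0$ strictly, so that exactly $m$ eigenvalues stay below the gap and $\mathbf{P}_{\mathbf{m}}^{\bm\varepsilon}$ has constant rank $m$. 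I would also check that $\{\psi_i^\varepsilon=\mathbf{P}_{\mathbf{m}}^{\bm\varepsilon}(E\varphi_i^0)\}_{i=1}^m$ is indeed a basis of $\mathbf{P}_{\mathbf{m}}^{\bm\varepsilon}(X_\varepsilon)$ for $\varepsilon$ small: since $\mathbf{P}_{\mathbf{m}}^{\bm\varepsilon}(E\varphi_i^0)\to \varphi_i^0$ (using $E\varphi_i^0\to\varphi_i^0$ in the appropriate sense and $\mathbf{P}_{\mathbf{m}}^{\bm\varepsilon}\to\mathbf{P}_{\mathbf{m}}^{\mathbf{0}}$ strongly), the Gram matrix of the $\psi_i^\varepsilon$ converges to the identity and is thus invertible for small $\varepsilon$, which also shows $j_\varepsilon$ and its inverse are bounded uniformly in $\varepsilon$.

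Next I would set up the Lyapunov--Perron fixed point exactly as in \cite{Sell&You}: given $\Phi\in\mathcal{F}_L$, for each $\bar p\in\mathbb{R}^m$ solve the backward-in-time integral equation for the trajectory on the manifold whose $\mathbf{Q}_{\mathbf{m}}^{\bm\varepsilon}$-component is slaved to $\Phi$, using the variation-of-constants formula split by $\mathbf{P}_{\mathbf{m}}^{\bm\varepsilon}$ and $\mathbf{Q}_{\mathbf{m}}^{\bm\varepsilon}$ and the exponential dichotomy coming from the spectral gap; then define $(\mathcal{T}_\varepsilon\Phi)(\bar p)$ to be the $\mathbf{Q}_{\mathbf{m}}^{\bm\varepsilon}$-component of that trajectory at time $0$. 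The smoothing estimates $\|e^{-A_\varepsilon t}\mathbf{Q}_{\mathbf{m}}^{\bm\varepsilon}\|_{\mathcal{L}(X_\varepsilon,X_\varepsilon^\alpha)}\le C t^{-\alpha}e^{-\lambda_{m+1}^\varepsilon t}$ and $\|e^{A_\varepsilon t}\mathbf{P}_{\mathbf{m}}^{\bm\varepsilon}\|_{\mathcal{L}(X_\varepsilon,X_\varepsilon^\alpha)}\le C(\lambda_m^\varepsilon)^\alpha e^{\lambda_m^\varepsilon t}$, together with the uniform Lipschitz bound $L_F$ and uniform bound $C_F$ from \textbf{(H2)}(a),(b) and the support condition \textbf{(H2)}(c), are what make $\mathcal{T}_\varepsilon$ well defined, map $\mathcal{F}_L$ into itself, and be a contraction there. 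The choice of $L<1$ and the gap inequalities with the improved constants $11,22$ are arranged precisely so that the relevant Beta-function integrals $\int_0^\infty s^{-\alpha}e^{-(\lambda_{m+1}^\varepsilon-\lambda_m^\varepsilon)s}\,ds$ and $\int_{-\infty}^0(\lambda_m^\varepsilon)^\alpha e^{\lambda_m^\varepsilon s}\,ds$ multiplied by $L_F$ stay strictly below the thresholds needed for self-mapping and contraction, uniformly in $\varepsilon\le\varepsilon_0$; here one invokes Lemma \ref{} and the elementary bound $\int_0^\infty s^{-\alpha}e^{-\beta s}\,ds=\Gamma(1-\alpha)\beta^{\alpha-1}$ together with \eqref{autovalorgrande0} to control the $\mathbf{P}$-part. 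The fixed point $\Phi_\varepsilon$ then gives $\mathcal{M}_\varepsilon=\{\,j_\varepsilon^{-1}(\bar p)+\Phi_\varepsilon(\bar p):\bar p\in\mathbb{R}^m\,\}$, and one checks invariance and exponential attraction in the standard way, the support conditions in $\mathcal{F}_L$ being consistent because $\mathrm{Supp}\,F_\varepsilon\subset D_R$ forces the manifold to be flat (equal to the linear subspace) outside $B_R$.

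The main obstacle, as I see it, is \emph{uniformity in $\varepsilon$} of all the constants, not the existence for each fixed $\varepsilon$, which is \cite{Sell&You} verbatim. Concretely: (i) the dichotomy exponents $\lambda_m^\varepsilon,\lambda_{m+1}^\varepsilon$ must be controlled by their limits uniformly, which needs the quantitative form of spectral convergence from Section \ref{linear} rather than mere convergence; (ii) the projections $\mathbf{P}_{\mathbf{m}}^{\bm\varepsilon}$ act on different spaces $X_\varepsilon^\alpha$, so the smoothing constants $C$ in the semigroup estimates must be shown to be $\varepsilon$-independent --- this is where one leans on the self-adjointness of $A_\varepsilon$ (giving $C$ essentially absolute via the spectral theorem) rather than on abstract analytic-semigroup bounds whose constants could blow up; and (iii) the passage from the gap hypotheses at $\varepsilon=0$ to effective gap hypotheses at $\varepsilon>0$ must be made with a fixed margin so that a single $L<1$ and a single $\varepsilon_0$ work for the whole family. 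Once these uniform bounds are in hand, the contraction mapping argument is routine and identical in form for every $\varepsilon\in[0,\varepsilon_0]$, which is exactly the structure we need for the distance estimate in Section \ref{distance}.
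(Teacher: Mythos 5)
Your proposal follows essentially the same route as the paper: use the spectral convergence provided by \textbf{(H1)} to transfer the gap conditions to $A_\varepsilon$ for $\varepsilon$ small with a fixed margin (the paper passes from the constants $12,24$ at $\varepsilon=0$ to the Sell--You thresholds $6,12$ for $\varepsilon>0$, see \eqref{gaps-eps}), then invoke the Lyapunov--Perron construction of \cite{Sell&You} with constants uniform in $\varepsilon$, and finally carry the manifold over $\mathbb{R}^m$ via the isomorphism $j_\varepsilon$, setting $\Phi_\varepsilon=\Psi_\varepsilon\circ j_\varepsilon^{-1}$. The additional details you sketch (basis property of the $\psi_i^\varepsilon$, uniform smoothing constants via self-adjointness, re-verification of the contraction estimates) are consistent with, rather than different from, the paper's argument, which simply checks the perturbed gap conditions and cites \cite{Sell&You}.
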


\begin{re} i) Observe that the gap condition is stated for the eigenvalues of the limit problem. In  particular, this implies that the inertial manifold is obtained of the same dimension $m$ for all values of the parameter $0\leq \eps\leq \eps_0$. 
\par\noindent ii) We have written quotations in the word ``graph'' since the manifold $\mathcal{M}_\varepsilon$ is not properly speaking the graph of the function $\Phi_\varepsilon$ but rather the graph of the appropriate function obtained via the isomorphism $j_\eps$ which identifies $\mathbf{P}_{\mathbf{m}}^{\bm\varepsilon}(X_\varepsilon^\alpha)$ with $\R^m$. 
That is,
$$\mathcal{M}_\eps=\{ j_\eps^{-1}(\bar p)+\Phi_\varepsilon(\bar p); \quad \bar p\in \R^m\}$$
\end{re}

The main result we want to show in this article is the following:

\begin{teo}\label{distaciavariedadesinerciales}
Let hypotheses {\bf (H1)} and {\bf (H2)} be satisfied and let $\tau(\eps)$ be defined by  \eqref{definition-tau}. 
Then, under the hypothesis of Proposition \ref{existenciavariedadinercial}, if $\Phi_0$, $\Phi_\varepsilon$ are the maps  that give us the inertial manifolds, then we have,
\begin{equation}\label{distance-inertialmanifolds}
\|\Phi_\varepsilon-E\Phi_0\|_{L^\infty(\mathbb{R}^m, X^\alpha_\varepsilon)}\leq C[\tau(\varepsilon)|\log(\tau(\varepsilon))|+\rho(\varepsilon)],
\end{equation}
with $C$ a constant independent of $\varepsilon$.
\end{teo}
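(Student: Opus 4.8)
The plan is to exploit the fact that both $\Phi_0$ and $\Phi_\varepsilon$ are fixed points of Lyapunov--Perron type operators $\mathcal{T}_0$ and $\mathcal{T}_\varepsilon$ acting on the complete metric space $\mathcal{F}_L$, and to compare these fixed points by the standard perturbation-of-fixed-points inequality. Concretely, if $\mathcal{T}_\varepsilon$ is a uniform contraction with constant $\theta<1$ (independent of $\varepsilon$), then
\begin{equation*}
\|\Phi_\varepsilon-E\Phi_0\|\leq\frac{1}{1-\theta}\,\bigl\|\mathcal{T}_\varepsilon(E\Phi_0)-E\,\mathcal{T}_0(\Phi_0)\bigr\|,
\end{equation*}
so everything reduces to estimating how far the ``transported'' limit manifold $E\Phi_0$ is from being a fixed point of the perturbed operator. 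The first step is therefore to write down explicitly the Lyapunov--Perron fixed point equation: $\Phi_\varepsilon(\bar p)=\mathbf{Q}_{\mathbf{m}}^{\bm\varepsilon}\int_{-\infty}^0 e^{A_\varepsilon s}\,F_\varepsilon\bigl(\sigma_\varepsilon(s;\bar p,\Phi_\varepsilon)\bigr)\,ds$ evaluated along the backward solution $\sigma_\varepsilon$ of the inertial form in $\mathbf{P}_{\mathbf{m}}^{\bm\varepsilon}(X_\varepsilon)$ with $\mathbf{P}$-coordinate $\bar p$ at time $0$, and record the contraction constant coming from Section~\ref{existence} (built from the gap conditions \eqref{CondicionAutovaloresFuerte0}--\eqref{autovalorgrande0}).

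\textbf{Key steps.} (1) Decompose the difference $\mathcal{T}_\varepsilon(E\Phi_0)-E\mathcal{T}_0(\Phi_0)$ into several pieces: the difference of the linear semigroups $e^{A_\varepsilon s}\mathbf{Q}^{\bm\varepsilon}$ versus $E e^{A_0 s}\mathbf{Q}^{\bm 0}$; the difference of the nonlinearities $F_\varepsilon(E\,\cdot)$ versus $E F_0(\cdot)$, controlled by $\rho(\varepsilon)$ via {\bf (H2)}(d); and the difference of the backward trajectories $\sigma_\varepsilon$ versus $E\sigma_0$, which feeds back on the unknown and must be closed by a Gronwall argument on the $\mathbf{P}$-component. (2) For the linear pieces, use hypothesis {\bf (H1)} and the spectral convergence established in Section~\ref{linear} to estimate $\|e^{A_\varepsilon s}\mathbf{Q}_{\mathbf{m}}^{\bm\varepsilon}-E e^{A_0 s}\mathbf{Q}_{\mathbf{m}}^{\bm 0}\|_{\mathcal{L}(X_0,X_\varepsilon^\alpha)}$; here one splits the time integral at a cutoff $s_*$, using on $s\in(-s_*,0)$ a bound of the form $\tau(\varepsilon)|s|^{-\alpha}e^{-\lambda_{m+1}^0 s}$-type estimate (the price $\tau(\varepsilon)$ paid by {\bf (H1)} multiplied by the singular but integrable kernel), and on $s<-s_*$ the uniform exponential decay $e^{-\lambda_{m+1}^{\varepsilon}|s|}$ of the high-mode semigroup. (3) Optimizing the cutoff $s_*$ over the two competing contributions — one growing like $\tau(\varepsilon)$ times something, the other decaying exponentially in $s_*$ — is exactly what produces the factor $|\log(\tau(\varepsilon))|$: choosing $s_*\sim c|\log\tau(\varepsilon)|$ balances $\tau(\varepsilon)$ against $e^{-\beta s_*}$ and yields the bound $C\tau(\varepsilon)|\log\tau(\varepsilon)|$. (4) Combine with the $\rho(\varepsilon)$ contribution from the nonlinearities, absorb the trajectory-difference term using the uniform contraction and Gronwall, and conclude \eqref{distance-inertialmanifolds}.

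\textbf{Main obstacle.} The technical heart — and the step I expect to be most delicate — is Step (2)--(3): obtaining a clean estimate on the difference of the two linear semigroups acting between the \emph{different} spaces $X_0$ and $X_\varepsilon^\alpha$ and restricted to the high modes, with explicit control of the constants in both the singular-kernel regime near $s=0$ and the exponential-decay regime for large $|s|$, and then performing the cutoff optimization that generates the logarithm. This requires translating the resolvent convergence {\bf (H1)} into semigroup convergence with the right dependence on $s$ (e.g.\ via a contour/Dunford integral representation or via interpolation), and being careful that the spectral gap used for the decay is the \emph{limit} gap, available for $\varepsilon$ small by the spectral convergence of Section~\ref{linear}. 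A secondary subtlety is that $E$ does not commute with the projections or the semigroups, so each commutator $\mathbf{Q}_{\mathbf{m}}^{\bm\varepsilon}E-E\mathbf{Q}_{\mathbf{m}}^{\bm 0}$ and $e^{A_\varepsilon s}E - E e^{A_0 s}$ must be estimated separately and shown to be $O(\tau(\varepsilon))$ on the relevant spaces; once these commutator estimates and the semigroup difference estimate are in hand, the rest is a routine assembly via the fixed-point perturbation inequality.
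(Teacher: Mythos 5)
Your overall architecture is essentially the paper's: the paper also compares the two Lyapunov--Perron fixed points, writing $\Phi_\varepsilon(\bar p^0)-E\Phi_0(\bar p^0)$ as an integral of $e^{A_\varepsilon s}\mathbf{Q}_{\mathbf{m}}^{\bm\varepsilon}(\tilde F_\varepsilon-E\tilde F_0)$ (which carries the $\rho(\varepsilon)$ term, the backward-trajectory difference controlled by a Gronwall argument in Lemma \ref{distanciapes}, and the unknown $\|\Phi_\varepsilon-E\Phi_0\|_\infty$, absorbed at the end thanks to the gap condition) plus an integral of $\left(e^{A_\varepsilon s}\mathbf{Q}_{\mathbf{m}}^{\bm\varepsilon}E-Ee^{A_0 s}\mathbf{Q}_{\mathbf{m}}^{\mathbf{0}}\right)\tilde F_0$. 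Your abstract fixed-point perturbation inequality is a repackaging of that absorption step; the only wrinkle is that $E\Phi_0$ is not literally an admissible argument of $\mathbf{T}_{\bm\varepsilon}$ (its range is not contained in $\mathbf{Q}_{\mathbf{m}}^{\bm\varepsilon}X_\varepsilon^\alpha$, and the $\mathbb{R}^m$-coordinates are tied to $j_\varepsilon$ rather than $j_0$), so you must either correct by the projection commutator, which is $O(\tau(\varepsilon))$ by Lemma \ref{estimacionProyecciones}, or subtract the two fixed-point identities directly as the paper does and compare coordinates via Lemma \ref{normacoordenadas}. Those are minor adjustments.

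The genuine gap is in your steps (2)--(3), i.e.\ exactly at the point you flag as delicate. What {\bf (H1)} gives through the Dunford integral over the shifted sector is $\|e^{-A_\varepsilon t}\mathbf{Q}_{\mathbf{m}}^{\bm\varepsilon}E-Ee^{-A_0 t}\mathbf{Q}_{\mathbf{m}}^{\mathbf{0}}\|_{\mathcal{L}(X_0,X_\varepsilon^\alpha)}\leq C\,\tau(\varepsilon)\,t^{-1}e^{-(\lambda_{m+1}^0-1)t}$: the kernel multiplying $\tau(\varepsilon)$ is $t^{-1}$, which is \emph{not} integrable at $t=0$, and this is precisely the obstruction. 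The bound $\tau(\varepsilon)|s|^{-\alpha}$ you postulate near $s=0$ is not a consequence of {\bf (H1)} by any contour argument, and if it were available the whole time integral would be $O(\tau(\varepsilon))$ with no logarithm at all --- a stronger estimate that the paper explicitly says it could not prove. Consequently your cutoff mechanism is also not the one that operates here: since the difference of the high-mode semigroups decays with the uniform rate $\lambda_{m+1}^0-1$ for all $t>0$, there is no large-time optimization $s_*\sim|\log\tau(\varepsilon)|$; instead one uses $\min\{\tau(\varepsilon)t^{-1},t^{-\alpha}\}$ (Lemma \ref{estimacionsemigruposlinealesproyectados}) and the logarithm arises from $\int_{\tau(\varepsilon)^{1/(1-\alpha)}}^{1}\tau(\varepsilon)s^{-1}\,ds$, i.e.\ from splitting at the \emph{small} crossover time $\tau(\varepsilon)^{1/(1-\alpha)}$ (Lemma \ref{lepsilon}). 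To complete your proof you must either establish the interpolated bound $\tau(\varepsilon)t^{-\alpha}$ (which would in fact improve the theorem and remove the log, and is not justified) or replace (2)--(3) by the $\min$-estimate and the small-time splitting just described.
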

\par\bigskip

\begin{re} Observe that the estimate \eqref{distance-inertialmanifolds} consists of two terms, 
$\tau(\eps)|\log(\tau(\eps))|$, inherited from the distance of the resolvent operators and $\rho(\eps)$ inherited from the distance of the nonlinear terms.  The factor $|\log(\tau(\eps))|$ seems to appear because of technical reasons. A better estimate would be $\|\Phi_\varepsilon-E\Phi_0\|_{L^\infty(\mathbb{R}^m, X^\alpha_\varepsilon)}\leq C[\tau(\varepsilon)+\rho(\varepsilon)],$ which we have not been able to show, although it is very plausible that this would be true and it should be  the optimal rate. 
\end{re}

\section{ Linear analysis and spectral behavior}
\label{linear}
The spectral decomposition of the operator $A_\varepsilon$ implies that if $\lambda\in\rho(A_\varepsilon)$ then,
$$(\lambda-A_\varepsilon)^{-1}u=\sum_{i=1}^\infty\frac{1}{\lambda-\lambda_i^\varepsilon}(u, \varphi_i^\varepsilon)\varphi_i^\varepsilon.$$
In particular, for $\varepsilon\geq 0$,
$$\|(\lambda-A_\varepsilon)^{-1}\|_{\mathcal{L}(X_\varepsilon, X_\varepsilon)}\leq\max_{i\in\mathbb{N}}\left\{\frac{1}{|\lambda-\lambda_i^\varepsilon|},\quad \lambda_i^\varepsilon\in\sigma(A_\varepsilon)\right\}=\frac{1}{dist(\lambda, \sigma(A_\varepsilon))}.$$

For $\alpha\geq0$ and for all $0\leq \varepsilon\leq \eps_0$, let ${A_\varepsilon}_{\mid_{ X^\alpha_\varepsilon}}: X^{1+\alpha}_\varepsilon\subset X^\alpha_\varepsilon\rightarrow X^\alpha_\varepsilon$, with domain $X^{1+\alpha}_\varepsilon\subset X^1_\varepsilon$, be the restriction of $A_\varepsilon$ to the fractional power space $X^\alpha_\varepsilon\subset X_\varepsilon$ so that,
$$A_\varepsilon u= {A_\varepsilon}_{|_{X^\alpha_\varepsilon}} u\qquad\forall u\in X^{1+\alpha}_\varepsilon.$$
Then ${A_\varepsilon}_{|_{X^\alpha_\varepsilon}}$ is also a sectorial operator on $X^\alpha_\varepsilon$ and with a similar spectral decomposition as above, we can also obtain the estimate
$$\|(\lambda I-A_\varepsilon)^{-1}\|_{\mathcal{L}(X_\varepsilon^\alpha, X_\varepsilon^\alpha)}\leq\frac{1}{dist(\lambda, \sigma(A_\varepsilon))},\qquad 0\leq \varepsilon\leq \eps_0.$$

\bigskip

Moreover, since $A_\varepsilon$ is a sectorial operator, $-A_\varepsilon$ is the infinitesimal generator of a linear semigroup that we denote as $e^{-A_\varepsilon t}$, where,
$$e^{-A_\varepsilon t}=\frac{1}{2\pi i}\int_{\Gamma}(\lambda I+A_\varepsilon)^{-1}e^{\lambda t} d\lambda,$$
with $\Gamma$ a contour in the resolvent set of $-A_\varepsilon$, $\rho(-A_\varepsilon)$, with $arg\lambda\rightarrow \pm \theta$ as $|\lambda|\rightarrow \infty$ for some $\theta\in(\frac{\pi}{2}, \pi)$, (see \cite{Henry1}). Since $A_\varepsilon$, $\varepsilon\geq 0$, is a self-adjoint operator,  the formula above is equivalent to
\begin{equation}\label{semigroup-series}
e^{-A_\varepsilon t}u=\sum_{i=1}^\infty e^{-\lambda^\varepsilon_i t}(u, 	\varphi_i^\varepsilon)\varphi_i^\varepsilon.
\end{equation}
Moreover, we have the following result.
\begin{lem}\label{estimacionsemigrupolineal}
We have the following estimates for the linear semigroup
$$\|e^{-A_\varepsilon t}\|_{\mathcal{L}(X_\varepsilon, X_\varepsilon)}\leq e^{-\lambda_1^\varepsilon t}\leq 1,$$
and,
$$\|e^{-A_\varepsilon t}\|_{\mathcal{L}(X_\varepsilon, X_\varepsilon^\alpha)}\leq e^{-\lambda_1^\varepsilon t}\left(\max\{\lambda_1^\varepsilon, \frac{\alpha}{t}\}\right)^\alpha,$$
for $t\geq 0$.
\end{lem}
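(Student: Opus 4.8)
The plan is to use the spectral representation \eqref{semigroup-series} of the semigroup and reduce both estimates to scalar inequalities on the Fourier coefficients. First, for the bound in $\mathcal{L}(X_\varepsilon, X_\varepsilon)$, given $u=\sum_i (u,\varphi_i^\varepsilon)\varphi_i^\varepsilon$, Parseval's identity gives
\[
\|e^{-A_\varepsilon t}u\|_{X_\varepsilon}^2=\sum_{i=1}^\infty e^{-2\lambda_i^\varepsilon t}(u,\varphi_i^\varepsilon)^2\leq e^{-2\lambda_1^\varepsilon t}\sum_{i=1}^\infty (u,\varphi_i^\varepsilon)^2=e^{-2\lambda_1^\varepsilon t}\|u\|_{X_\varepsilon}^2,
\]
since $\lambda_i^\varepsilon\geq\lambda_1^\varepsilon>0$ for all $i$; taking square roots and using $e^{-\lambda_1^\varepsilon t}\leq 1$ for $t\geq 0$ finishes the first claim.

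For the second estimate I would recall that, by definition of the fractional power space, $\|v\|_{X_\varepsilon^\alpha}^2=\sum_i (\lambda_i^\varepsilon)^{2\alpha}(v,\varphi_i^\varepsilon)^2$, so that
\[
\|e^{-A_\varepsilon t}u\|_{X_\varepsilon^\alpha}^2=\sum_{i=1}^\infty (\lambda_i^\varepsilon)^{2\alpha}e^{-2\lambda_i^\varepsilon t}(u,\varphi_i^\varepsilon)^2\leq\Big(\sup_{i\geq 1}(\lambda_i^\varepsilon)^{2\alpha}e^{-2\lambda_i^\varepsilon t}\Big)\|u\|_{X_\varepsilon}^2.
\]
Thus everything reduces to bounding the scalar quantity $\sup_{\lambda\geq\lambda_1^\varepsilon}\lambda^{\alpha}e^{-\lambda t}$. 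I would study the function $g(\lambda)=\lambda^\alpha e^{-\lambda t}$ on $[\lambda_1^\varepsilon,\infty)$: it is increasing up to its critical point $\lambda^*=\alpha/t$ and decreasing afterwards. Hence if $\lambda_1^\varepsilon\geq\alpha/t$ the supremum over the admissible range is attained at $\lambda=\lambda_1^\varepsilon$, giving $g(\lambda_1^\varepsilon)\leq (\lambda_1^\varepsilon)^\alpha e^{-\lambda_1^\varepsilon t}$; if $\lambda_1^\varepsilon<\alpha/t$ the supremum is $g(\alpha/t)=(\alpha/t)^\alpha e^{-\alpha}\leq (\alpha/t)^\alpha$. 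To merge both cases into the stated form one factors out $e^{-\lambda_1^\varepsilon t}$, writing $\lambda^\alpha e^{-\lambda t}=e^{-\lambda_1^\varepsilon t}\,\lambda^\alpha e^{-(\lambda-\lambda_1^\varepsilon)t}$ and observing that $\lambda^\alpha e^{-(\lambda-\lambda_1^\varepsilon)t}\leq\big(\max\{\lambda_1^\varepsilon,\alpha/t\}\big)^\alpha$ for all $\lambda\geq\lambda_1^\varepsilon$ (in the first regime use monotonicity and drop the extra exponential; in the second, bound $\lambda^\alpha e^{-(\lambda-\lambda_1^\varepsilon)t}\leq\lambda^\alpha e^{-\lambda t}e^{\lambda_1^\varepsilon t}\leq (\alpha/t)^\alpha e^{-\alpha}e^{\lambda_1^\varepsilon t}$, which is a bit wasteful; cleaner is to just note $\lambda^\alpha e^{-(\lambda-\lambda_1^\varepsilon)t}$ has the same critical-point analysis shifted). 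Taking square roots then yields the claimed inequality.

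The only mildly delicate point — the main obstacle, such as it is — is the case analysis in the elementary maximization and making sure the $\max\{\lambda_1^\varepsilon,\alpha/t\}$ appears with exactly the right power and with the $e^{-\lambda_1^\varepsilon t}$ prefactor pulled out cleanly; this is the standard analytic-semigroup smoothing estimate and presents no conceptual difficulty. One should also note the degenerate case $\alpha=0$, where the bound collapses to the first estimate, and $t=0$, where $\alpha/t$ is interpreted as $+\infty$ so the bound is vacuous unless $\alpha=0$ — consistent with the fact that $e^{-A_\varepsilon\cdot 0}=I$ does not map $X_\varepsilon$ into $X_\varepsilon^\alpha$.
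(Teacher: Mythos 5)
Your proposal is correct and follows essentially the same route as the paper: expand $e^{-A_\varepsilon t}u$ in the eigenbasis, use Parseval for the $\mathcal{L}(X_\varepsilon,X_\varepsilon)$ bound, and for the smoothing estimate maximize $\lambda^\alpha e^{-\lambda t}$ over $\lambda\geq\lambda_1^\varepsilon$ with the case split at $\lambda=\alpha/t$, recovering the prefactor $e^{-\lambda_1^\varepsilon t}$ in the second case from $\lambda_1^\varepsilon t\leq\alpha$ (i.e. $e^{-\alpha}\leq e^{-\lambda_1^\varepsilon t}$), exactly as the paper does. The additional remarks on the degenerate cases $\alpha=0$ and $t=0$ are harmless and do not change the argument.
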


\begin{proof}  With the expression of the semigroup given by \eqref{semigroup-series}, we get
 $$\|e^{-A_\varepsilon t}u\|_{X_\varepsilon^\alpha}=\left(\sum_{i=1}^\infty e^{-2\lambda_i^\varepsilon t}(u, \varphi_i^\varepsilon)^2(\lambda_i^\varepsilon)^{2\alpha}\right)^{\frac{1}{2}}.$$
The function $f(\lambda)=e^{-\lambda t}\lambda^\alpha$ attains its maximum at $\lambda=\frac{\alpha}{t}$. Then, we have to distinguish two cases:
 \begin{itemize}
 \item[]If $\frac{\alpha}{t}<\lambda_{1}^\varepsilon$, we obtain
 $$\|e^{-A_\varepsilon t}u\|_{X_\varepsilon^\alpha}\leq e^{-\lambda_{1}^\varepsilon t} (\lambda_{1}^\varepsilon)^\alpha \|u\|_{X_\varepsilon}.$$
 \item[]And if $\lambda_{1}^\varepsilon\leq \frac{\alpha}{t}$,
 $$\|e^{-A_\varepsilon t}u\|_{X_\varepsilon^\alpha}\leq e^{-\alpha}\left(\frac{\alpha}{t}\right)^\alpha\|u\|_{X_\varepsilon}\leq e^{-\lambda_{1}^\varepsilon t}\left(\frac{\alpha}{t}\right)^\alpha\|u\|_{X_\varepsilon}.$$
 \end{itemize}
 That is,
 $$\|e^{-A_\varepsilon t}u\|_{X_\varepsilon^\alpha}\leq e^{-\lambda_{1}^\varepsilon t}\left(\max\{\lambda_{1}^\varepsilon, \frac{\alpha}{t}\}\right)^\alpha\|u\|_{X_\varepsilon} .$$
 In the same way, since
 $$\|e^{-A_\varepsilon t}u\|_{X_\varepsilon}=\left(\sum_{i=1}^\infty e^{-2\lambda_i^\varepsilon t}(u, \varphi_i^\varepsilon)^2\right)^{\frac{1}{2}},$$
 then, we obtain,
 $$\|e^{-A_\varepsilon t}u\|_{X_\varepsilon}\leq e^{-\lambda_{1}^\varepsilon t}\|u\|_{X_\varepsilon} .$$
 This concludes the proof of the result.  
 \end{proof}
\par\bigskip

With respect to the relation of the spectrum we have the following result.
\begin{lem}\label{spectrum-behavior}
If $K_0$ is a compact set of the complex plane with $K_0\subset \rho(A_0)$, the resolvent set of $A_0$, and hypothesis {\bf (H1)} is satisfied, then there exists $\eps_0(K_0)>0$ such that $K_0\subset \rho(A_\eps)$ for all $0<\eps\leq \eps_0(K_0)$. Moreover,  we have the estimates:
\begin{equation}\label{uniform-bounds-spectrum}
\|(\lambda I-A_\eps)^{-1}\|_{\mathcal{L}(X_\varepsilon, X^\alpha_\varepsilon)}\leq C(K_0),\qquad \|(\lambda I-A_\eps)^{-1}\|_{\mathcal{L}(X_\varepsilon, X_\varepsilon)}\leq C(K_0),
\end{equation}
for all $\lambda\in K_0$, $0<\eps\leq \eps_0(K_0)$. 
 \end{lem}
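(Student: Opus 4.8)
The plan is to reduce everything to the \emph{bounded} operators $A_\eps^{-1}$, to which hypothesis {\bf (H1)} applies directly. For $\lambda\neq 0$ one has the algebraic identity $\lambda I-A_\eps=-A_\eps(I-\lambda A_\eps^{-1})$, so $\lambda\in\rho(A_\eps)$ if and only if $I-\lambda A_\eps^{-1}$ is boundedly invertible on $X_\eps$, and in that case $(\lambda I-A_\eps)^{-1}=-(I-\lambda A_\eps^{-1})^{-1}A_\eps^{-1}$; the case $\lambda=0$ is trivial since the $A_\eps$ are positive. Thus it suffices to prove that $I-\lambda A_\eps^{-1}$ is invertible on $X_\eps$ with the norm of its inverse bounded uniformly in $\lambda\in K_0$ and $0<\eps\le\eps_0(K_0)$.

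First I would exploit that the limit operator $EA_0^{-1}M$ behaves like $A_0^{-1}$ because $ME=I$: using this relation one verifies directly that, for each $\lambda\in K_0$,
$$(I-\lambda EA_0^{-1}M)^{-1}=I+\lambda EA_0^{-1}(I-\lambda A_0^{-1})^{-1}M,$$
where $(I-\lambda A_0^{-1})^{-1}$ exists since $K_0\subset\rho(A_0)$. By the spectral decomposition of $A_0$, $\|(I-\lambda A_0^{-1})^{-1}\|_{\mathcal{L}(X_0)}=\sup_n|\lambda_n^0/(\lambda_n^0-\lambda)|$, which is bounded by a constant depending only on $\mathrm{dist}(K_0,\sigma(A_0))$ and $\sup_{K_0}|\cdot|$ (split according to whether $\lambda_n^0$ is above or below $2\sup_{K_0}|\cdot|$). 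Together with $\|E\|,\|M\|\le 2$ and the bound on $\|A_0^{-1}\|$, this gives $\|(I-\lambda EA_0^{-1}M)^{-1}\|_{\mathcal{L}(X_\eps)}\le C_1(K_0)$ uniformly in $\lambda\in K_0$ and $\eps$.

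Next I would write $I-\lambda A_\eps^{-1}=(I-\lambda EA_0^{-1}M)-\lambda(A_\eps^{-1}-EA_0^{-1}M)$ and factor out the (uniformly) invertible first factor. Since $X_\eps^\alpha\hookrightarrow X_\eps$ with an embedding constant independent of $\eps$ (because $\lambda_1^\eps\ge c>0$ uniformly), {\bf (H1)} yields $\|A_\eps^{-1}-EA_0^{-1}M\|_{\mathcal{L}(X_\eps)}\to 0$, so there is $\eps_0(K_0)>0$ with $\sup_{\lambda\in K_0}|\lambda|\,C_1(K_0)\,\|A_\eps^{-1}-EA_0^{-1}M\|_{\mathcal{L}(X_\eps)}\le\tfrac12$ for $\eps\le\eps_0(K_0)$; a Neumann series then shows $I-\lambda A_\eps^{-1}$ is invertible on $X_\eps$ with $\|(I-\lambda A_\eps^{-1})^{-1}\|_{\mathcal{L}(X_\eps)}\le 2C_1(K_0)$. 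Hence $K_0\subset\rho(A_\eps)$, and from $(\lambda I-A_\eps)^{-1}=-(I-\lambda A_\eps^{-1})^{-1}A_\eps^{-1}$ together with $\|A_\eps^{-1}\|_{\mathcal{L}(X_\eps)}=1/\lambda_1^\eps\le 1/c$ we get $\|(\lambda I-A_\eps)^{-1}\|_{\mathcal{L}(X_\eps,X_\eps)}\le 2C_1(K_0)/c$. Since by the diagonal (spectral) representation this norm equals $1/\mathrm{dist}(\lambda,\sigma(A_\eps))$, we obtain the uniform lower bound $\mathrm{dist}(\lambda,\sigma(A_\eps))\ge\delta(K_0):=c/(2C_1(K_0))>0$. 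Finally, again by the spectral representation, $\|(\lambda I-A_\eps)^{-1}\|_{\mathcal{L}(X_\eps,X_\eps^\alpha)}=\sup_i(\lambda_i^\eps)^\alpha/|\lambda-\lambda_i^\eps|$, which I would bound by treating separately the indices with $\lambda_i^\eps\ge 2\sup_{K_0}|\cdot|$ (where $|\lambda-\lambda_i^\eps|\ge\lambda_i^\eps/2$ and $\alpha<1$ make $(\lambda_i^\eps)^{\alpha-1}$ bounded) and those with $\lambda_i^\eps<2\sup_{K_0}|\cdot|$ (where $|\lambda-\lambda_i^\eps|\ge\delta(K_0)$). This produces the constant $C(K_0)$ and completes the argument.

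The only genuine difficulty I anticipate is the uniform invertibility of $I-\lambda A_\eps^{-1}$: the whole scheme rests on the explicit inverse formula for $I-\lambda EA_0^{-1}M$ furnished by $ME=I$, which is precisely what allows {\bf (H1)} to enter as a small bounded perturbation, uniformly over the compact set $K_0$. The passage from the $\mathcal{L}(X_\eps)$ bound to the $\mathcal{L}(X_\eps,X_\eps^\alpha)$ bound is where self-adjointness of $A_\eps$ is used essentially, since it makes the resolvent diagonal and hence its norm on every fractional power space controlled by the eigenvalues alone.
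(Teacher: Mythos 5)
Your argument is correct, but it is a genuinely different route from the paper's. The paper proves the lemma by contradiction and compactness: it first shows that if $\|(\lambda_{\eps_n}I-A_{\eps_n})^{-1}\|_{\mathcal{L}(X_{\eps_n},X^\alpha_{\eps_n})}$ blows up along a sequence $\lambda_{\eps_n}\to\lambda_0$, then $\lambda_0\in\sigma(A_0)$; this uses normalized almost-singular vectors $u_{\eps_n}$, the auxiliary solutions $\hat u_{\eps_n}=A_0^{-1}(\lambda_{\eps_n}Mu_{\eps_n}-\cdot)$, the \emph{compactness of the resolvent of $A_0$} to extract a convergent subsequence in $X_0^\alpha$, and {\bf (H1)} together with $M\circ E=I$ to pass to the limit and produce a genuine eigenfunction; both the inclusion $K_0\subset\rho(A_\eps)$ and the uniform bounds \eqref{uniform-bounds-spectrum} then follow by contradiction using the compactness of $K_0$. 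You instead argue directly and quantitatively: you reduce to the bounded operators via $\lambda I-A_\eps=-A_\eps(I-\lambda A_\eps^{-1})$, invert $I-\lambda EA_0^{-1}M$ explicitly through the identity $(I-\lambda EA_0^{-1}M)^{-1}=I+\lambda EA_0^{-1}(I-\lambda A_0^{-1})^{-1}M$ (which is where $ME=I$ enters, and which checks out on both sides), and then absorb $A_\eps^{-1}-EA_0^{-1}M$ by a Neumann series, using the uniform embedding $X_\eps^\alpha\hookrightarrow X_\eps$ (valid since $\lambda_1^\eps\geq c>0$ uniformly) to convert {\bf (H1)} into $\mathcal{L}(X_\eps)$-smallness; the fractional-power bound then comes from the spectral (diagonal) representation after you have extracted the uniform lower bound $\mathrm{dist}(\lambda,\sigma(A_\eps))\geq\delta(K_0)$. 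What your approach buys: explicit constants, an explicit threshold $\eps_0(K_0)$ expressible through $\tau(\eps)$, no use of compactness of the resolvents (only self-adjointness and the uniform positivity), and the distance bound $\mathrm{dist}(\lambda,\sigma(A_\eps))\geq\delta(K_0)$, which in particular also gives the upper semicontinuity of the spectrum stated in Remark \ref{autovalores}. What the paper's softer argument buys: it needs no algebraic identities or norm bookkeeping, and the same compactness scheme is the one the authors reuse to discuss spectral convergence. The only points worth making explicit if you write this up are the (standard) complexification needed to speak of non-real $\lambda$ for operators on real Hilbert spaces, and the one-line verification that $-(I-\lambda A_\eps^{-1})^{-1}A_\eps^{-1}$ indeed maps into $D(A_\eps)$ and is a two-sided inverse of $\lambda I-A_\eps$; neither is a gap.
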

 \begin{proof} Let us start by showing the following: if $\lambda_{\varepsilon_n}\in\rho(A_{\varepsilon_n})$ with $\|(\lambda_{\varepsilon_n} I-A_{\varepsilon_n})^{-1}\|_{\mathcal{L}(X_{\varepsilon_n},X_{\varepsilon_n}^\alpha)}\geq k_n$, $k_n\rightarrow +\infty$ as $n\rightarrow +\infty$, and $\lambda_{\varepsilon_n}\rightarrow \lambda_0$, then $\lambda_0\in\sigma(A_0)$.

Then, assume there exists a sequence $\{\lambda_{\varepsilon_n}\}\in \rho(A_{\varepsilon_n})$ with 
$\|(\lambda_{\varepsilon_n}I-A_{\varepsilon_n})^{-1}\|_{\mathcal{L}(X_{\varepsilon_n},X_{\varepsilon_n}^\alpha)}\geq k_n$, and such that $\lambda_{\varepsilon_n}\rightarrow \lambda_0$ as $\varepsilon_n\rightarrow 0$, for some $\lambda_0$. This implies that there exists  $f_{\eps_n}\in X_{\varepsilon_n}$ with $\|f_{\eps_n}\|_{X_{\varepsilon_n}}=1$ and if
$w_{\eps_n}=(\lambda_{\varepsilon_n} I-A_{\eps_n})^{-1}f_{\eps_n}$, then $\|w_{\eps_n}\|_{X_{\varepsilon_n}^\alpha}\to +\infty$.

If we define $u_{\eps_n}=w_{\eps_n}/\|w_{\eps_n}\|_{X^\alpha_{\varepsilon_n}}$, then $\lambda_{\varepsilon_n}u_{\varepsilon_n}-A_{\eps_n} u_{\eps_n}=f_{\eps_n}/\|w_{\eps_n}\|_{X^\alpha_{\varepsilon_n}}$, which implies 
$$ A_{\eps_n} u_{\eps_n}= \lambda_{\eps_n} u_{\eps_n} -\frac{f_{\eps_n}}{\|w_{\eps_n}\|_{X^\alpha_{\varepsilon_n}}}.$$
Let $\hat{u}_{\varepsilon_n}\in X^\alpha_0$ satisfy the following equation,
\begin{equation}\label{hatU}
A_0\hat{u}_{\varepsilon_n}= \lambda_{\eps_n} Mu_{\eps_n} -\frac{M f_{\eps_n}}{\|w_{\eps_n}\|_{X^\alpha_{\varepsilon_n}}}.
\end{equation}
If we study the norm of the right side, since $\left\|\frac{Mf_{\varepsilon_n}}{\|w_{\varepsilon_n}\|_{X^\alpha_{\varepsilon_n}}}\right\|_{X_0}\rightarrow 0$, we have, by (\ref{cotaextensionproyeccion})
$$\left\|\lambda_{\eps_n} Mu_{\eps_n} -\frac{M f_{\eps_n}}{\|w_{\eps_n}\|_{X^\alpha_{\varepsilon_n}}}\right\|_{X_0}\leq2|\lambda_{\varepsilon_n}|\|u_{\varepsilon_n}\|_{X_{\varepsilon_n}}+\left\|\frac{Mf_{\varepsilon_n}}{\|w_{\varepsilon_n}\|_{X^\alpha_{\varepsilon_n}}}\right\|_{X_0}\leq C.$$
So, $\{\hat{u}_{\varepsilon_n}\}\subset X^\alpha_0$ is a compact family. Then, there exists a $\hat{u}_0\in X_0^\alpha$ and a subsequence, we denote it again as $\hat{u}_{\varepsilon_n}$, such that $\hat{u}_{\varepsilon_n}\rightarrow \hat{u}_0$ in $X_0^\alpha$, as $\varepsilon_n\rightarrow 0$.
Moreover, by hypothesis (H1), we have, $\|u_{\eps_n}-E\hat{u}_{\varepsilon_n}\|_{X_{\varepsilon_n}^\alpha}\to 0$. And,
$$\|u_{\varepsilon_n}-E\hat{u}_0\|_{X_{\varepsilon_n}}\leq \|u_{\varepsilon_n}-E\hat{u}_{\varepsilon_n}\|_{X_{\varepsilon_n}}+\|E\hat{u}_{\varepsilon_n}-E\hat{u}_0\|_{X_{\varepsilon_n}}\leq$$
$$\leq \|u_{\varepsilon_n}-E\hat{u}_{\varepsilon_n}\|_{X_{\varepsilon_n}}+2\|\hat{u}_{\varepsilon_n}-\hat{u}_0\|_{X_0}\rightarrow 0.$$
So, again by (\ref{cotaextensionproyeccion}),
$$\|M u_{\varepsilon_n}-\hat{u}_0\|_{X_0}=\|M(u_{\varepsilon_n}-E\hat{u}_0)\|_{X_0}\leq 2\|u_{\varepsilon_n}-E\hat{u}_0\|_{X_{\varepsilon_n}}\rightarrow 0.$$

Hence, via subsequences, $\lambda_{\eps_n} M u_{\eps_n} -\frac{M f_{\eps_n}}{\|w_{\eps_n}\|_{X^\alpha_{\varepsilon_n}}}\to \lambda_0 \hat{u}_0$  in $X_0$ for some
$\hat{u}_0\in X^\alpha_0$. Also, from the definition of $u_{\varepsilon_n}$ we have that $\|u_{\varepsilon_n}\|_{X_{\varepsilon_n}^\alpha}=1$. Then $1=\|u_{\varepsilon_n}\|_{X_{\varepsilon_n}^\alpha}\leq\|u_{\varepsilon_n}-E\hat{u}_0\|_{X_{\varepsilon_n}^\alpha}+\|E\hat{u}_0\|_{X^\alpha_{\varepsilon_n}}\leq\|u_{\varepsilon_n}-E\hat{u}_0\|_{X_{\varepsilon_n}^\alpha}+2\|\hat{u}_0\|_{X_0^\alpha}.$ But since $\|u_{\varepsilon_n}-E\hat{u}_0\|_{X_{\varepsilon_n}^\alpha}\rightarrow 0$ then $\|\hat{u}_0\|_{X_0^\alpha}>0$ and hence $\hat{u}_0\neq 0$. So, from equation (\ref{hatU}) and the above estimates, we obtain $A_0 \hat{u}_0=\lambda_0 \hat{u}_0$, which shows that $\lambda_0\in \sigma(A_0)$.

\bigskip

Next, we apply this result to prove our lemma. For the first part, we proceed as follows. If $K_0\cap \sigma(A_\eps)$ is non empty for $\eps$ small enough, then there exists a sequence $\varepsilon_n\rightarrow 0$ and $\hat{\lambda}_{\varepsilon_n}\in K_0\cap \sigma(A_{\eps_n})$.  Since the spectrum of $A_{\varepsilon_n}$ is discrete for all $\varepsilon_n$, for each $n$ we can choose $\lambda_{\varepsilon_n}\in\rho(A_{\varepsilon_n})$ such that $|\lambda_{\varepsilon_n}-\hat{\lambda}_{\varepsilon_n}|<\frac{1}{n}$ and $\|(\lambda_{\varepsilon_n} I-A_{\varepsilon_n})^{-1}\|_{\mathcal{L}(X_{\varepsilon_n},X^\alpha_{\varepsilon_n})}>k_n$ with $k_n\rightarrow +\infty$. Moreover, since $K_0$ is compact, there is a subsequence $\hat{\lambda}_{\hat{\varepsilon}_n}$  with $\hat{\lambda}_{\hat{\varepsilon}_n}\to \lambda_0$ and $\lambda_0\in K_0$. Then, we have just proved that, $\lambda_0\in \sigma(A_0)$. This is a contradiction. So, $K_0\cap \sigma(A_{\eps})$ is empty, and then $K_0\subset \rho(A_{\varepsilon})$ as we wanted to prove. 
\bigskip

To obtain the desired estimates, suppose there exist  sequences $\{\lambda_n\}\in K_0$  and $\{\varepsilon_n\}$ with $\varepsilon_n\rightarrow 0$ as $n\rightarrow +\infty$ such that,
$$\|(\lambda_n I-A_{\varepsilon_n})^{-1}\|_{\mathcal{L}(X_{\varepsilon_n}, X_{\varepsilon_n}^\alpha)}\geq k_n,$$
with $k_n\rightarrow +\infty$. Since $K_0$ is a compact set, there exists a $\lambda_0\in K_0$ and a subsequence $\{\lambda_{n_k}\}\in K_0$ with $\lambda_{n_k}\rightarrow \lambda_0$, $\lambda_0\in K_0$, and
$$\|(\lambda_{n_k} I-A_{\varepsilon_{n_k}})^{-1}\|_{\mathcal{L}(X_{\varepsilon_{n_k}}, X^\alpha_{\varepsilon_{n_k}})}\geq k_{n_k}.$$

Then, we have proved above that, $\lambda_0\in\sigma(A_0)$. This is a contradiction because $\lambda_0\in K_0\subset \rho(A_0)$. So, we have for $\lambda\in K_0$,
$$\|(\lambda I-A_\eps)^{-1}\|_{\mathcal{L}(X_\varepsilon, X^\alpha_\varepsilon)}\leq C(K_0),\qquad \|(\lambda I-A_\eps)^{-1}\|_{\mathcal{L}(X_\varepsilon, X_\varepsilon)}\leq C(K_0).$$
This concludes the proof. 
\end{proof}

\bigskip

\begin{re}\label{autovalores}
The result just proved implies the uppersemicontinuity of the spectrum: if $\lambda_\eps\in \sigma(A_\eps)$ and $\lambda_\eps \to \lambda_0$ (via subsequences) then $\lambda_0\in \sigma(A_0)$.   
\end{re}

\bigskip

Now we want to estimate $\|(\lambda I+A_\varepsilon)^{-1}E-E(\lambda I+A_0)^{-1}\|_{\mathcal{L}(X_0, X^\alpha_\varepsilon)}$. We have the following result.
\begin{lem}\label{estimacionlandas}
With the notation above and assuming hypothesis {\bf (H1)} is satisfied, if $\lambda\in\rho(-A_0)$ and $\varepsilon$ is small enough so that $\lambda\in\rho(-A_\varepsilon)$,  we have 
$$\|(\lambda I+A_\varepsilon)^{-1}E-E(\lambda I+A_0)^{-1}\|_{\mathcal{L}(X_0, X^\alpha_\varepsilon)}\leq C^\varepsilon_3(\lambda)\tau(\varepsilon),$$
where $C^\varepsilon_3(\lambda)=\left(1+\frac{|\lambda|}{dist(\lambda, \sigma(-A_\varepsilon))}\right)\left(1+\frac{|\lambda|}{dist(\lambda, \sigma(-A_0))}\right)$ and $\tau(\eps)$ is defined by \eqref{definition-tau}.
\end{lem}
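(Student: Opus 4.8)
The plan is to reduce the resolvent difference to the quantity $A_\varepsilon^{-1}E-EA_0^{-1}$, which by \eqref{definition-tau} has size $\tau(\varepsilon)$, and then to control the remaining factors by the self-adjoint resolvent estimates recalled at the beginning of this section. The starting point is the algebraic operator identity
\[
(\lambda I+A_\varepsilon)^{-1}E-E(\lambda I+A_0)^{-1}=A_\varepsilon(\lambda I+A_\varepsilon)^{-1}\bigl(A_\varepsilon^{-1}E-EA_0^{-1}\bigr)A_0(\lambda I+A_0)^{-1}.
\]
This makes sense because $(\lambda I+A_0)^{-1}$ maps $X_0$ into $D(A_0)$, $A_\varepsilon^{-1}$ maps $X_\varepsilon$ into $D(A_\varepsilon)$, and $(\lambda I+A_\varepsilon)^{-1}$ maps $X_\varepsilon^\alpha$ into $X_\varepsilon^{1+\alpha}\subset D(A_\varepsilon)$. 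I would verify the identity by applying the right-hand side to an arbitrary $u\in X_0$, using $A_\varepsilon(\lambda I+A_\varepsilon)^{-1}A_\varepsilon^{-1}=(\lambda I+A_\varepsilon)^{-1}$ and the commutation of $A_0$ with $(\lambda I+A_0)^{-1}$: after expansion the terms proportional to $\lambda$ cancel in pairs and precisely $(\lambda I+A_\varepsilon)^{-1}Eu-E(\lambda I+A_0)^{-1}u$ survives.

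Next I would rewrite the two outer factors by means of $A_\varepsilon(\lambda I+A_\varepsilon)^{-1}=I-\lambda(\lambda I+A_\varepsilon)^{-1}$ and $A_0(\lambda I+A_0)^{-1}=I-\lambda(\lambda I+A_0)^{-1}$, obtaining
\[
(\lambda I+A_\varepsilon)^{-1}E-E(\lambda I+A_0)^{-1}=\bigl(I-\lambda(\lambda I+A_\varepsilon)^{-1}\bigr)\bigl(A_\varepsilon^{-1}E-EA_0^{-1}\bigr)\bigl(I-\lambda(\lambda I+A_0)^{-1}\bigr),
\]
and I would read the right-hand side as a composition $X_0\to X_0\to X_\varepsilon^\alpha\to X_\varepsilon^\alpha$. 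Taking norms: the middle factor contributes $\|A_\varepsilon^{-1}E-EA_0^{-1}\|_{\mathcal{L}(X_0,X_\varepsilon^\alpha)}\le\tau(\varepsilon)$ by \eqref{definition-tau}; since $-A_0$ is self-adjoint and $\lambda\in\rho(-A_0)$, the factor on the right satisfies $\|I-\lambda(\lambda I+A_0)^{-1}\|_{\mathcal{L}(X_0,X_0)}\le 1+|\lambda|\,dist(\lambda,\sigma(-A_0))^{-1}$; and, by the analogous estimate on the fractional power space $X_\varepsilon^\alpha$, the factor on the left satisfies $\|I-\lambda(\lambda I+A_\varepsilon)^{-1}\|_{\mathcal{L}(X_\varepsilon^\alpha,X_\varepsilon^\alpha)}\le 1+|\lambda|\,dist(\lambda,\sigma(-A_\varepsilon))^{-1}$. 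Multiplying the three bounds yields exactly $C_3^\varepsilon(\lambda)\tau(\varepsilon)$.

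The only delicate point is the justification of the operator identity, i.e. checking that each composition of a resolvent with the unbounded operator $A_0$ or $A_\varepsilon$ is legitimate and that the algebra is carried out on the correct domains and fractional power spaces; once this bookkeeping is done, the estimate is immediate from the self-adjoint resolvent bounds and from hypothesis {\bf (H1)} in the form \eqref{definition-tau}. I would also remark that the assumption that $\varepsilon$ is small enough for $\lambda\in\rho(-A_\varepsilon)$ is precisely what Lemma \ref{spectrum-behavior} guarantees when $\lambda$ is restricted to a compact subset of $\rho(-A_0)$.
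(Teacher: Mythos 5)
Your proposal is correct and follows essentially the same route as the paper: you establish the same key factorization \eqref{identidadprincipal}, namely $(\lambda I+A_\varepsilon)^{-1}E-E(\lambda I+A_0)^{-1}=\bigl(I-\lambda(\lambda I+A_\varepsilon)^{-1}\bigr)\bigl(A_\varepsilon^{-1}E-EA_0^{-1}\bigr)\bigl(I-\lambda(\lambda I+A_0)^{-1}\bigr)$, and then bound the middle factor by $\tau(\varepsilon)$ via \eqref{definition-tau} and the outer factors by the self-adjoint resolvent estimates on $X_0$ and $X_\varepsilon^\alpha$, exactly as in the paper's proof (which derives the same identity by a slightly different algebraic manipulation).
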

\begin{proof}
First of all notice that from Lemma \ref{spectrum-behavior} if $\lambda\in\rho(-A_0)$ then $\lambda\in\rho(-A_\varepsilon)$ for $\varepsilon$ small enough. Hence $(\lambda I+A_\varepsilon)^{-1}$ and $(\lambda I+A_0)^{-1}$ are well defined for all $\lambda\in \rho(-A_0)$. 

We are interested in estimating, $$\|(\lambda I+A_\varepsilon)^{-1}E-E(\lambda I+A_0)^{-1}\|_{\mathcal{L}(X_0,X_\varepsilon^\alpha)}.$$
The first thing we are going to do is to show the following identity:
\begin{equation}\label{identidadprincipal} 
(\lambda I+A_\varepsilon)^{-1}E-E(\lambda I+A_0)^{-1}=  
[I-(\lambda I+A_\varepsilon)^{-1}\lambda ](A_\varepsilon^{-1}E-E A_0^{-1})[I-\lambda(\lambda I+A_0)^{-1}].
\end{equation}
First, note that 
\begin{equation}\label{identidad}
(I+A_\varepsilon^{-1}\lambda)[I-(A_\varepsilon+\lambda I)^{-1}\lambda]= I,
\end{equation}
then,
$$(I+A_\varepsilon^{-1}\lambda)(\lambda I+A_\varepsilon)^{-1}=A_\varepsilon^{-1}.$$
Hence,
$$(I+A_\varepsilon^{-1}\lambda)\left[(\lambda I+A_\varepsilon)^{-1}E-E(\lambda I+A_0)^{-1}\right]=$$
$$=A_\varepsilon^{-1}E-E(\lambda I+A_0)^{-1}-A_\varepsilon^{-1}\lambda E(\lambda I+A_0)^{-1}.$$
Since, 
$$E(\lambda I+A_0)^{-1}=EA_0^{-1}- EA_0^{-1}+E(\lambda I+A_0)^{-1}=EA_0^{-1}- EA_0^{-1}[I-A_0( \lambda I+A_0)^{-1}]=$$
$$=EA_0^{-1}- EA_0^{-1}[(A_0+\lambda I)^{-1}\lambda],$$
we have,
$$(I+A_\varepsilon^{-1}\lambda)\left[(\lambda I+A_\varepsilon)^{-1}E-E(\lambda I+A_0)^{-1}\right]=$$
$$=A_\varepsilon^{-1}E-A_\varepsilon^{-1}E\lambda(\lambda I+A_0)^{-1}-EA_0^{-1}+EA_0^{-1}[(A_0+\lambda I)^{-1}\lambda]=$$
$$=(A_\varepsilon^{-1} E-EA_0^{-1})[I-\lambda(\lambda I+A_0)^{-1}].$$
By (\ref{identidad}), $[I-\lambda(A_\varepsilon+\lambda I)^{-1}](I+A_\varepsilon^{-1}\lambda)= I$, then we obtain the desired identity (\ref{identidadprincipal}),
$$(\lambda I+A_\varepsilon)^{-1}E-E(\lambda I+A_0)^{-1}=[I-\lambda(A_\varepsilon+\lambda I)^{-1}](A_\varepsilon^{-1} E-EA_0^{-1})[I-\lambda(\lambda I+A_0)^{-1}].$$
Hence, since hypothesis {\bf (H1)} is satisfied, we obtain the desired estimates,
$$\|(\lambda I+A_\varepsilon)^{-1}E-E(\lambda I+A_0)^{-1}\|_{\mathcal{L}(X_0, X^\alpha_\varepsilon)}\leq $$
$$\leq\|(I-\lambda(A_\varepsilon+\lambda I)^{-1}\|_{\mathcal{L}(X^\alpha_\varepsilon, X^\alpha_\varepsilon)}\|A_\varepsilon^{-1}E-E A_0^{-1}\|_{\mathcal{L}(X_0, X^\alpha_\varepsilon)}\|I-\lambda(\lambda I+A_0)^{-1}\|_{\mathcal{L}(X_0, X_0)}\leq$$
$$\leq \left(1+\frac{|\lambda|}{dist(\lambda, \sigma(-A_\varepsilon))}\right)\tau(\varepsilon)\left(1+\frac{|\lambda|}{dist(\lambda, \sigma(-A_0))}\right).$$
This concludes the proof.
\end{proof}
\par\bigskip 
We can easily show now,

\begin{cor}\label{C3}
(i) If $K_0\subset\rho(-A_0)$ as in Lemma \ref{spectrum-behavior} and $\Sigma_{-a, \phi}$ is the set of the complex plane described by 
$$\Sigma_{-a, \phi}=\{\lambda\in\mathbb{C}: |arg(\lambda+a)|\leq \pi-\phi\},$$
with $a\geq 0$,  then,
$$\sup_{\lambda\in K_0\cup \Sigma_{-a, \phi}}C^\varepsilon_3(\lambda)\leq {C}_3.$$
(ii) If we take $a=0$ and $\phi=\frac{\pi}{4}$ then
\begin{equation}\label{cotaC3}
C_3^\varepsilon(\lambda)\leq\left(1+\frac{1}{\sin(\phi)}\right)^2\leq 6, \qquad\textrm{for all}\quad \lambda\in \Sigma_{0, \frac{\pi}{4}}.
\end{equation}
\end{cor}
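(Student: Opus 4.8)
The plan is to note that $C_3^\varepsilon(\lambda)$ is the product of two factors of the shape $1+|\lambda|/\mathrm{dist}(\lambda,\sigma(-A_\bullet))$, so it is enough to bound, uniformly in $0\le\varepsilon\le\varepsilon_0$ and in $\lambda$ ranging over $K_0\cup\Sigma_{-a,\phi}$, the quantity $|\lambda|/\mathrm{dist}(\lambda,\sigma(-A_\varepsilon))$; the very same argument with $\varepsilon=0$ controls the $A_0$ factor. The structural fact that makes this uniform is that the operators $A_\varepsilon$ are positive with a bound independent of $\varepsilon$ (recall $\lambda_1^\varepsilon\ge c>0$ with $c$ not depending on $\varepsilon$), so that \emph{every} spectrum $\sigma(-A_\varepsilon)$ is contained in the single half-line $(-\infty,-c]$; hence $\mathrm{dist}(\lambda,\sigma(-A_\varepsilon))\ge \mathrm{dist}(\lambda,(-\infty,-c])$ for all $\varepsilon$, and it remains to estimate this last, $\varepsilon$-free, distance.

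First I would handle the compact piece $K_0$. Since $K_0\subset\rho(-A_0)$ is compact, $\delta_0:=\mathrm{dist}(K_0,\sigma(-A_0))>0$ and $R_0:=\sup_{\lambda\in K_0}|\lambda|<\infty$, which bounds the $A_0$ factor by $1+R_0/\delta_0$. For the $A_\varepsilon$ factor I would apply Lemma \ref{spectrum-behavior} to the compact set $-K_0\subset\rho(A_0)$: it yields $K_0\subset\rho(-A_\varepsilon)$ for $\varepsilon$ small and a uniform bound $\|(\lambda I+A_\varepsilon)^{-1}\|_{\mathcal{L}(X_\varepsilon,X_\varepsilon)}\le C(K_0)$; since $A_\varepsilon$ is self-adjoint this norm equals $1/\mathrm{dist}(\lambda,\sigma(-A_\varepsilon))$, so $\mathrm{dist}(\lambda,\sigma(-A_\varepsilon))\ge 1/C(K_0)$ on $K_0$. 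Both factors are then bounded uniformly in $\varepsilon$ on $K_0$.

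Next I would treat the sector $\Sigma_{-a,\phi}$, taking $a$ small enough (e.g. $a<c$) that $\Sigma_{-a,\phi}$ is disjoint from $(-\infty,-c]$, hence contained in $\rho(-A_\varepsilon)$ for every $\varepsilon$. Here the estimate is plane geometry: writing $\mu=\lambda+a$, the condition $|\arg\mu|\le\pi-\phi$ with $\phi\in(0,\tfrac{\pi}{2})$ forces $\mathrm{dist}(\mu,(-\infty,0])\ge|\mu|\sin\phi$, since the nearest point of $(-\infty,0]$ is the origin when $\mathrm{Re}\,\mu\ge0$ and the foot of the perpendicular when $\mathrm{Re}\,\mu<0$, in which case the distance is $|\mathrm{Im}\,\mu|=|\mu|\,|\sin\arg\mu|\ge|\mu|\sin\phi$. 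Thus $\mathrm{dist}(\lambda,(-\infty,-a])\ge|\lambda+a|\sin\phi$, and combining with $\mathrm{dist}(\lambda,\sigma(-A_\varepsilon))\ge\mathrm{dist}(\lambda,(-\infty,-c])\ge\mathrm{dist}(\lambda,(-\infty,-a])$ (using $a\le c$) and $|\lambda|\le|\lambda+a|+a$ gives $|\lambda|/\mathrm{dist}(\lambda,\sigma(-A_\varepsilon))\le(1+a/|\lambda+a|)/\sin\phi$. For $|\lambda+a|$ bounded below this is bounded; for $\lambda$ near the vertex $-a$ one instead uses $\mathrm{dist}(\lambda,\sigma(-A_\varepsilon))\ge c-a>0$ together with $|\lambda|$ bounded, again a uniform bound. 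Taking the larger constant over $K_0$ and over $\Sigma_{-a,\phi}$, and squaring, produces the constant $C_3$ in (i).

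Finally, for (ii) I would simply set $a=0$ and $\phi=\tfrac{\pi}{4}$, so the vertex term is absent and the geometric estimate reads $\mathrm{dist}(\lambda,\sigma(-A_\varepsilon))\ge\mathrm{dist}(\lambda,(-\infty,0])\ge|\lambda|\sin\phi$ for all $\lambda\in\Sigma_{0,\pi/4}$; hence each factor of $C_3^\varepsilon(\lambda)$ is at most $1+1/\sin\phi$ and $C_3^\varepsilon(\lambda)\le(1+1/\sin\phi)^2=(1+\sqrt{2})^2=3+2\sqrt2\le6$. The only point requiring any care is the uniformity in $\varepsilon$, which, as stressed above, reduces entirely to the single inclusion $\sigma(-A_\varepsilon)\subset(-\infty,-c]$ valid for all $\varepsilon$; everything else is elementary.
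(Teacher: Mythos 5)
Your proposal is correct and follows what the paper intends but leaves unproved (the corollary is stated after ``We can easily show now''; Remark \ref{C3acotado} only invokes the spectral uppersemicontinuity for the uniformity in $\varepsilon$): you use exactly the intended ingredients, namely $\sigma(-A_\varepsilon)\subset(-\infty,-c]$ uniformly, the elementary sector geometry giving $\mathrm{dist}(\lambda,(-\infty,-a])\geq|\lambda+a|\sin\phi$, and Lemma \ref{spectrum-behavior} for the compact piece $K_0$. Your explicit restriction to $a$ small (e.g.\ $a<c$, so that $\Sigma_{-a,\phi}$ stays in the resolvent sets) is a sensible and in fact necessary reading of the loosely stated ``$a\geq 0$'', and the computation $(1+1/\sin(\pi/4))^2=3+2\sqrt2\leq 6$ for part (ii) is exactly right.
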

\vspace{0.8cm}

\begin{re}\label{C3acotado}
Note that, although $C_3^\varepsilon(\lambda)$ depends on $\varepsilon$, thanks to the uppersemicontinuity of the eigenvalues, see Remark \ref{autovalores}, we can consider it uniform in $\varepsilon$.
\end{re}

\bigskip

The estimate found in Lemma \ref{estimacionlandas} will be applied to obtain estimates on the distance of the spectral projections and estimates on the distance of the linear semigroups generated by $A_0$ and $A_\eps$.   
Let us start with the spectral projections. 

Let us assume that for some $m=1, 2, ...$ we have $\lambda_m^0<\lambda_{m+1}^0$ and as we have mentioned in the introduction, we denote by $\{\varphi^\varepsilon_i\}_{i=1}^m$  the first $m$ eigenfunctions of the operator $A_\varepsilon $, $0\leq\varepsilon\leq\varepsilon_0$ and by $\mathbf{P}_{\mathbf{m}}^{\bm\varepsilon}$ the canonical orthogonal projection onto the subspace $[\varphi_1^\eps, \ldots, \varphi_m^\eps]$, that is, if $0<\eps\leq \eps_0$
\begin{equation}\label{projectionP}
\begin{array}{rl}
\mathbf{P}_{\mathbf{m}}^{\bm\varepsilon}:X_\varepsilon&\longrightarrow X_\varepsilon \\ \\
v&\longrightarrow \mathbf{P}_{\mathbf{m}}^{\bm\varepsilon}(v)=\sum_{i=1}^m (v,\varphi_i^\eps)_{X_\varepsilon}\varphi_i^\eps 
\end{array}
\end{equation}
or if $\eps=0$, 
\begin{equation}\label{projectionP0}
\begin{array}{rl}
\mathbf{P}_{\mathbf{m}}^{\bm 0}:X_0&\longrightarrow X_0 \\ \\
v&\longrightarrow \mathbf{P}_{\mathbf{m}}^{\bm 0}(v)=\sum_{i=1}^m (v,\varphi_i^0)_{X_0}\varphi_i^0
\end{array}
\end{equation}
Notice that in a natural way, the projections may be defined in the intermediate space $X_\eps^\alpha$ and, since it is a finite linear combination of eigenfunctions, its range is contained also in $X_\eps^\alpha$.

We have the following estimate.
\begin{lem}\label{estimacionProyecciones}
Let $\{\mathbf{P}_{\mathbf{m}}^{\bm\varepsilon}\}_{0\leq\varepsilon\leq\varepsilon_0}$ be the family of canonical orthogonal projections described above, $v\in X_0$, $\Gamma$ a curve in the complex plane contained in $\rho(-A_0)$ and encircling the first $m$ eigenvalues of $-A_0$. Then if we assume {\bf (H1)} is satisfied, we have
$$\|\mathbf{P}_{\mathbf{m}}^{\bm\varepsilon}E(v)-E\mathbf{P}_{\mathbf{m}}^{\mathbf{0}}(v)\|_{X^\alpha_\varepsilon}\leq C_P \tau(\varepsilon)\|v\|_{X_0},$$
with $C_P=\frac{|\Gamma|}{2\pi}\sup_{\lambda\in\Gamma} C^\varepsilon_3(\lambda)$, $|\Gamma|$ the length of the curve $\Gamma$ and $C_3^\eps$ is given in Lemma \ref{estimacionlandas}. 

\end{lem}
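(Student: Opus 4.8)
The plan is to represent both spectral projections as Riesz integrals over the common contour $\Gamma$ and then exploit the resolvent estimate from Lemma \ref{estimacionlandas}. Recall that the canonical orthogonal projection onto the first $m$ eigenfunctions of a self-adjoint operator $A_\eps$ can be written as
\begin{equation*}
\mathbf{P}_{\mathbf{m}}^{\bm\varepsilon} = \frac{1}{2\pi i}\int_{\Gamma}(\lambda I+A_\eps)^{-1}\,d\lambda,
\end{equation*}
where $\Gamma\subset\rho(-A_0)$ encircles $-\lambda_1^0,\dots,-\lambda_m^0$ (equivalently, after the sign convention of the paper, the first $m$ eigenvalues of $-A_0$) and no other eigenvalue of $-A_0$. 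By Lemma \ref{spectrum-behavior}, for $\eps$ small enough $\Gamma$ also lies in $\rho(-A_\eps)$, and by upper semicontinuity of the spectrum (Remark \ref{autovalores}) together with the fact that the first $m$ eigenvalues of $-A_\eps$ converge to those of $-A_0$ — a consequence of the spectral convergence discussed after {\bf (H1)} — the contour $\Gamma$ encircles exactly the first $m$ eigenvalues of $-A_\eps$ as well. Hence the same Riesz formula represents $\mathbf{P}_{\mathbf{m}}^{\bm\varepsilon}$ for all small $\eps$.

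Next I would write the difference, for $v\in X_0$, as
\begin{equation*}
\mathbf{P}_{\mathbf{m}}^{\bm\varepsilon}E(v)-E\mathbf{P}_{\mathbf{m}}^{\mathbf{0}}(v) = \frac{1}{2\pi i}\int_{\Gamma}\left[(\lambda I+A_\eps)^{-1}E - E(\lambda I+A_0)^{-1}\right]v\,d\lambda,
\end{equation*}
using linearity of $E$ and that it commutes with the scalar integral. Then I take norms in $X_\eps^\alpha$, pull the norm inside the integral, and apply Lemma \ref{estimacionlandas} pointwise in $\lambda\in\Gamma$:
\begin{equation*}
\|\mathbf{P}_{\mathbf{m}}^{\bm\varepsilon}E(v)-E\mathbf{P}_{\mathbf{m}}^{\mathbf{0}}(v)\|_{X^\alpha_\varepsilon} \leq \frac{1}{2\pi}\int_{\Gamma} C_3^\eps(\lambda)\tau(\eps)\|v\|_{X_0}\,|d\lambda| \leq \frac{|\Gamma|}{2\pi}\Big(\sup_{\lambda\in\Gamma}C_3^\eps(\lambda)\Big)\tau(\eps)\|v\|_{X_0},
\end{equation*}
which is exactly the claimed bound with $C_P = \frac{|\Gamma|}{2\pi}\sup_{\lambda\in\Gamma}C_3^\eps(\lambda)$. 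The supremum is finite (and can be taken uniform in $\eps$) because $\Gamma$ is a compact subset of $\rho(-A_0)$, so $\mathrm{dist}(\lambda,\sigma(-A_0))$ is bounded below on $\Gamma$, and by Lemma \ref{spectrum-behavior} plus Remark \ref{C3acotado} the same is true for $\mathrm{dist}(\lambda,\sigma(-A_\eps))$ uniformly in small $\eps$.

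The main obstacle is the justification that $\Gamma$ encircles precisely the first $m$ eigenvalues of $-A_\eps$ for $\eps$ small — i.e. that no eigenvalue of $-A_\eps$ crosses $\Gamma$ and that none of the first $m$ escapes. Upper semicontinuity (Remark \ref{autovalores}) rules out spurious eigenvalues accumulating inside $\Gamma$ from outside, while the resolvent convergence in {\bf (H1)} forces each of the first $m$ eigenvalues of $A_\eps$ to converge to the corresponding one of $A_0$ (so they stay inside $\Gamma$); one also needs that the total multiplicity enclosed is continuous, which follows from the convergence of $\mathbf{P}_{\mathbf{m}}^{\bm\varepsilon}$ in norm, itself a byproduct of the very estimate being proved. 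To avoid circularity I would simply invoke the spectral convergence already asserted in the text right after hypothesis {\bf (H1)} (convergence of eigenvalues and eigenprojections), which guarantees that for $\eps\le\eps_0(\Gamma)$ the Riesz projection over $\Gamma$ has rank $m$ and equals $\mathbf{P}_{\mathbf{m}}^{\bm\varepsilon}$. Everything else is a routine norm estimate on a contour integral.
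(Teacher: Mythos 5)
Your proposal is correct and follows essentially the same route as the paper: both write the difference of projections as a Riesz (Dunford) integral over the common contour $\Gamma$, pull the $X_\eps^\alpha$-norm inside, and apply Lemma \ref{estimacionlandas} to get the bound with $C_P=\frac{|\Gamma|}{2\pi}\sup_{\lambda\in\Gamma}C_3^\eps(\lambda)$. The only (minor) difference is in identifying the Riesz projection over $\Gamma$ with $\mathbf{P}_{\mathbf{m}}^{\bm\varepsilon}$: the paper deduces equality of ranks directly from the estimate just obtained (so there is no circularity to avoid), whereas you appeal to the spectral convergence/upper semicontinuity statements, which is an acceptable equivalent justification.
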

\begin{proof}
Let $\Gamma$ be the curve mentioned above. From Lemma \ref{spectrum-behavior},  taking $K_0=\Gamma$, we have that  $\Gamma\subset \rho(-A_\eps)$ for $0\leq \eps\leq \eps_0(\Gamma)$ with $\eps_0(\Gamma)$ small enough.  The spectral projection over the eigenspace generated by the part of the spectrum of $-A_\eps$ contained ``inside'' the curve $\Gamma$ is given by 
$$\mathbf{P}_{\mathbf{\Gamma}}^{\bm\varepsilon}=\frac{1}{2\pi i}\int_{\Gamma} (A_\varepsilon + \lambda I)^{-1}d\lambda,\qquad\textrm{with}\quad \lambda\in\Gamma, \quad 0\leq \eps\leq \eps_0.$$
Therefore, 
$$\|\mathbf{P}_{\mathbf{\Gamma}}^{\bm\varepsilon}E(v)-E\mathbf{P}_{\mathbf{\Gamma}}^{\mathbf{0}}(v)\|_{X^\alpha_\varepsilon}\leq \left|\frac{1}{2\pi i}\right|\left|\int_{\Gamma}\|(\lambda I+A_\varepsilon )^{-1}E(v)-E(\lambda I+A_0 )^{-1}(v)\|_{X^\alpha_\varepsilon}d\lambda\right|.$$
Applying now Lemma \ref{estimacionlandas}, we obtain
\begin{equation}\label{estimate-projectionsGamma}
\|\mathbf{P}_{\mathbf{\Gamma}}^{\bm\varepsilon}E(v)-E\mathbf{P}_{\mathbf{\Gamma}}^{\mathbf{0}}(v)\|_{X^\alpha_\varepsilon}\leq \frac{1}{2\pi}|\Gamma|\sup_{\lambda\in\Gamma}C_3^\eps(\lambda)\tau(\varepsilon)\|v\|_{X_0}= C_P \tau(\varepsilon)\|v\|_{X_0}.
\end{equation}
Since the curve $\Gamma$ encircles only the first $m$ eigenvalues of $-A_0$, then we know that $\mathbf{P}_{\mathbf{\Gamma}}^{\mathbf{0}}=\mathbf{P}_{\mathbf{m}}^{\mathbf{0}}$, that is, the projection over the first $m$ eigenfunctions.   This implies that $Rank(\mathbf{P}_{\mathbf{\Gamma}}^{\mathbf{0}})=m$ and from \eqref{estimate-projectionsGamma}, we also have that $Rank(\mathbf{P}_{\mathbf{\Gamma}}^{\bm\varepsilon})=m$ and therefore we also have $\mathbf{P}_{\mathbf{\Gamma}}^{\bm\varepsilon}=\mathbf{P}_{\mathbf{m}}^{\bm\varepsilon}$. Hence, \eqref{estimate-projectionsGamma} proves the result.  
\end{proof}

\begin{re}\label{continuity}
With a similar argument as the one in the proof of Lemma \ref{estimacionProyecciones}, we may prove the continuity of the eigenvalues and of the spectral projections. If $\lambda_0$ is an eigenvalue of $-A_0$ of multiplicity $s$  and if
$\Gamma=\{ z\in  \mathbb{C}: |z-\lambda_0|=\delta\}$, with $\delta>0$ small enough so that $\sigma(-A_0)\cap \{ z\in  \mathbb{C}: |z-\lambda_0|\leq 2\delta\}=\{ \lambda_0\}$, then for $\eps$ small enough, $\Gamma\subset \rho(-A_\eps)$ and 
$$\|\mathbf{P}_{\mathbf{\Gamma}}^{\bm\varepsilon}E(v)-E\mathbf{P}_{\mathbf{\Gamma}}^{\mathbf{0}}(v)\|_{X^\alpha_\varepsilon}\leq  C\tau(\varepsilon)\|v\|_{X_0}\to 0, \hbox{ as }\eps\to 0,
$$
which implies that the rank of the projection $\mathbf{P}_{\mathbf{\Gamma}}^{\bm\varepsilon}$ is also $s$ and therefore there are exactly $s$ eigenvalues (counting multiplicity) of $-A_\eps$ in $\{ z\in \mathbb{C}: |z-\lambda_0|\leq \delta\}$ and the projections converge. 
\end{re}

\bigskip

We can also obtain good estimates for the linear semigroups.
\begin{lem}\label{estimacionsemigruposlineales}
Let hypothesis {\bf (H1)} be satisfied. If we denote,
$$l^\alpha_\varepsilon(t):=\min\{t^{-1}\tau(\varepsilon),t^{-\alpha}\},\qquad t>0\quad\textrm{and}\quad \alpha\in[0, 1)$$
then, 
\begin{equation}\label{estimacionlinealL2}
\|e^{-A_\varepsilon t}E-E e^{-A_0t}\|_{\mathcal{L}(X_0, X^\alpha_\varepsilon)}\leq 4 l^\alpha_\varepsilon(t).
\end{equation}

\end{lem}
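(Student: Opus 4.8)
The plan is to represent both semigroups via the Dunford integral along a common contour $\Gamma$ lying in $\rho(-A_0)$ (hence, by Lemma \ref{spectrum-behavior}, in $\rho(-A_\eps)$ for $\eps$ small), and to use the resolvent estimate of Lemma \ref{estimacionlandas} together with Corollary \ref{C3} to control the difference. Concretely, write
\[
e^{-A_\eps t}E - E e^{-A_0 t} = \frac{1}{2\pi i}\int_{\Gamma} e^{\lambda t}\Big[(\lambda I + A_\eps)^{-1}E - E(\lambda I + A_0)^{-1}\Big]\, d\lambda,
\]
with $\Gamma = \partial\Sigma_{0,\pi/4}$ the boundary of the sector $\Sigma_{0,\pi/4}$ from Corollary \ref{C3}(ii) (appropriately oriented), so that $\mathrm{Re}\,\lambda \sim -|\lambda|/\sqrt{2}$ on the two rays and $e^{\lambda t}$ decays. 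By Lemma \ref{estimacionlandas} and \eqref{cotaC3}, the bracketed term is bounded in $\mathcal{L}(X_0, X_\eps^\alpha)$ by $6\,\tau(\eps)$ uniformly in $\lambda\in\Gamma$ (using Remark \ref{C3acotado} to make the constant $\eps$-independent). Parametrizing $\Gamma$ by $|\lambda| = s$ gives a bound of the form
\[
\|e^{-A_\eps t}E - E e^{-A_0 t}\|_{\mathcal{L}(X_0, X_\eps^\alpha)} \le C\,\tau(\eps)\int_0^\infty e^{-c s t}\, ds = \frac{C}{c}\,\frac{\tau(\eps)}{t},
\]
which yields the $t^{-1}\tau(\eps)$ branch of $l^\alpha_\eps(t)$ after tracking the constant down to $4$.

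For the other branch, $t^{-\alpha}$, I would not use the resolvent-difference estimate at all but rather the triangle inequality together with Lemma \ref{estimacionsemigrupolineal}:
\[
\|e^{-A_\eps t}E - E e^{-A_0 t}\|_{\mathcal{L}(X_0, X_\eps^\alpha)} \le \|e^{-A_\eps t}E\|_{\mathcal{L}(X_0, X_\eps^\alpha)} + \|E e^{-A_0 t}\|_{\mathcal{L}(X_0, X_\eps^\alpha)}.
\]
By Lemma \ref{estimacionsemigrupolineal}, $\|e^{-A_\eps t}\|_{\mathcal{L}(X_\eps, X_\eps^\alpha)} \le e^{-\lambda_1^\eps t}(\max\{\lambda_1^\eps, \alpha/t\})^\alpha$; for $t$ small this behaves like $(\alpha/t)^\alpha \le t^{-\alpha}$, and combined with $\|E\|\le 2$ on both $X_0\to X_\eps$ and $X_0^\alpha\to X_\eps^\alpha$ (from \eqref{cotaextensionproyeccion}) and the fact that $\|e^{-A_0 t}\|_{\mathcal{L}(X_0, X_0^\alpha)}$ obeys the same bound, each term is controlled by $2 t^{-\alpha}$, giving the constant $4$ again. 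Taking the minimum of the two bounds gives \eqref{estimacionlinealL2}. One should also check the case of large $t$ separately, where the exponential factors $e^{-\lambda_1^\eps t}$ make both terms small and the minimum is trivially dominated; here the uniform positivity $\lambda_1^\eps \ge c > 0$ and the uppersemicontinuity of the spectrum (Remark \ref{autovalores}) keep constants $\eps$-independent.

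The main obstacle I anticipate is making the contour-integral estimate genuinely uniform in $\eps$ and in $t$: the constant $C_3^\eps(\lambda)$ in Lemma \ref{estimacionlandas} involves $\mathrm{dist}(\lambda,\sigma(-A_\eps))$, which a priori could degenerate as $\eps\to 0$ near points where eigenvalues of $-A_\eps$ approach $\Gamma$; this is precisely what Corollary \ref{C3} and Remark \ref{C3acotado} are designed to rule out, so the argument hinges on invoking them correctly with the sector contour rather than a compact $K_0$. A secondary technical point is the interchange of the $\mathcal{L}(X_0,X_\eps^\alpha)$-norm with the contour integral and the convergence of $\int_0^\infty e^{-cst}\,ds$ at $s=0$ — the integrand $e^{\lambda t}(\lambda I + A_\eps)^{-1}E - \ldots$ is bounded near $\lambda = 0$ since $0\in\rho(-A_\eps)$ uniformly (as $\lambda_1^\eps \ge c$), so no extra indentation of the contour around the origin is needed. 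Everything else is routine bookkeeping to pin the constant at $4$.
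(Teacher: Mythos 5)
Your proposal is correct and follows essentially the same route as the paper: the Dunford integral over the boundary of $\Sigma_{0,\pi/4}$ combined with Lemma \ref{estimacionlandas} and the uniform bound $C_3^\varepsilon(\lambda)\leq 6$ of Corollary \ref{C3}(ii) for the $t^{-1}\tau(\varepsilon)$ branch, and the triangle inequality with Lemma \ref{estimacionsemigrupolineal} and \eqref{cotaextensionproyeccion} for the $t^{-\alpha}$ branch. The only superfluous worry is the separate treatment of large $t$: the elementary inequality $e^{-\lambda t}\left(\max\{\lambda,\alpha/t\}\right)^{\alpha}\leq t^{-\alpha}$ holds for all $t>0$ and $\lambda>0$, so no case distinction in $t$ is needed.
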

\begin{proof}
Let $\Sigma_{0,\phi}=\{\lambda\in\mathbb{C}: |arg(\lambda)|\leq\pi-\phi\}$, with $\phi=\frac{\pi}{4}$,  and let $\Gamma$ be the boundary of $\Sigma_{0,\frac{\pi}{4}}$, that is the curve consisting of the following segments $\Gamma^1$ and $\Gamma^2$, 
$$\Gamma=\Gamma^1\cup\Gamma^2=\{re^{-i(\pi-\phi)} : 0\leq r<\infty\}\cup\{re^{i(\pi-\phi)}: 0\leq r< +\infty\}$$ oriented such that the imaginary part grows as $\lambda$ runs in $\Gamma$. We know that,
$$e^{-A_\varepsilon t}E-E e^{-A_0t}=\frac{1}{2\pi i}\int_{\Gamma} \left((\lambda I+A_\varepsilon)^{-1}E-E(\lambda I+A_0)^{-1}\right)e^{\lambda t}d\lambda.$$
So,
$$\|e^{-A_\varepsilon t}E-E e^{-A_0t}\|_{\mathcal{L}(X_0, X^\alpha_\varepsilon)}\leq\frac{1}{2\pi}\left|\int_{\Gamma}C_3\tau(\varepsilon)|e^{\lambda t}|d\lambda\right|,$$
with $C_3=\sup_{\lambda\in\Gamma}C_3^\eps(\lambda)$. Since $\lambda\in\Gamma$,
$$|e^{\lambda t}|=|e^{(re^{-i(\pi-\phi)})t}|=e^{(-rcos(\phi))t}\qquad\textrm{for}\quad 0\leq r\leq +\infty,\quad \lambda\in \Gamma^1$$
and,
$$|e^{\lambda t}|=|e^{(re^{i(\pi-\phi)})t}|=e^{(-rcos(\phi))t}\qquad\textrm{for}\quad 0\leq r\leq +\infty,\quad \lambda\in\Gamma^2.$$
With this,

$$\|e^{-A_\varepsilon t}E-Ee^{-A_0t}\|_{\mathcal{L}(X_0, X^\alpha_\varepsilon)}\leq\frac{2}{2\pi}C_3\tau(\varepsilon)\int_0^\infty e^{(-rcos(\phi))t}dr.$$
We make the change of variables $(rcos(\phi))t=z$, and then,
$$\|e^{-A_\varepsilon t}E-E e^{-A_0t}\|_{\mathcal{L}(X_0, X^\alpha_\varepsilon)}\leq\frac{1}{\pi}C_3\tau(\varepsilon)\frac{1}{cos(\phi)t}\int_{0}^\infty e^{-z}dz\leq \frac{1}{\pi cos(\phi)}C_3\tau(\varepsilon)t^{-1},$$
with $C_3=\sup_{\lambda\in\Gamma}C_3^\eps(\lambda)\leq 6$ and, for $\phi=\frac{\pi}{4}$, $\frac{C_3}{\pi cos(\phi)}<4$, which implies 
\begin{equation}\label{eq-aux1}
\|e^{-A_\varepsilon t}E-E e^{-A_0t}\|_{\mathcal{L}(X_0, X^\alpha_\varepsilon)}\leq 4\tau(\eps) t^{-1}.
\end{equation}

\vspace{0.2cm}

On the other hand,
$$\|e^{-A_\varepsilon t}E-Ee^{-A_0 t}\|_{\mathcal{L}(X_0, X^\alpha_\varepsilon)}\leq \|e^{-A_\varepsilon t}E\|_{\mathcal{L}(X_0, X^\alpha_\varepsilon)}+\|E e^{-A_0 t}\|_{\mathcal{L}(X_0, X^\alpha_\varepsilon))}.$$

Then, by Lemma \ref{estimacionsemigrupolineal} and (\ref{cotaextensionproyeccion}),
$$\|e^{-A_\varepsilon t}E-E e^{-A_0 t}\|_{\mathcal{L}(X_0, X^\alpha_\varepsilon)}\leq 2e^{-\lambda_1^\varepsilon t}\left(\max\{\lambda_1^\varepsilon,\frac{\alpha}{t}\}\right)^{\alpha}+2e^{-\lambda_1^0 t}\left(\max\{\lambda_1^0,\frac{\alpha}{t}\}\right)^{\alpha}
$$
But, direct computations show that for each $\lambda>0$ we have $e^{-\lambda  t}\left(\max\{\lambda ,\frac{\alpha}{t}\}\right)^{\alpha}\leq t^{-\alpha}$ and therefore,
\begin{equation}\label{eq-aux2}
\|e^{-A_\varepsilon t}E-E e^{-A_0 t}\|_{\mathcal{L}(X_0, X^\alpha_\varepsilon)}\leq 4t^{-\alpha}
\end{equation}

Putting together \eqref{eq-aux1} and \eqref{eq-aux2}, we get the result. 
\end{proof}

\par\bigskip

For further analysis we will include here some properties of the function $l^\alpha_\varepsilon(t)$ that will be used below.

\begin{lem}\label{lepsilon}
Let $0\leq \gamma<1$ and $a>0$.  If we consider, for all $t>0$,
$$l^\alpha_\varepsilon(t):=\min\{t^{-1}\tau(\varepsilon), t^{-\alpha}\},\qquad\textrm{with}\quad 0\leq\alpha<1,\quad\textrm{and}\quad \tau(\varepsilon)\xrightarrow[\varepsilon\rightarrow 0]{} 0,$$
then, we have the following estimates,
$$\int_0^t (t-s)^{-\gamma}l^\alpha_\varepsilon(s)ds\leq\frac{2^\gamma}{(1-\gamma)(1-\alpha)}t^{-\gamma}(|\log(t))+|\log(\tau(\varepsilon))|)\tau(\varepsilon),$$
$$\int_0^t e^{-as}l^\alpha_\varepsilon(s)ds\leq \frac{2}{1-\alpha}(|\log(t)|+|\log(\tau(\varepsilon))|)\tau(\varepsilon),$$
and,
$$\int_0^\infty e^{-as}l^\alpha_\varepsilon(s)ds\leq \frac{2}{1-\alpha}|\log(\tau(\varepsilon))|\tau(\varepsilon),\qquad\textrm{if}\quad a\geq 1.$$

\end{lem}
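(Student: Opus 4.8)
The plan is to estimate each of the three integrals by splitting the domain of integration at the threshold where the two competing expressions in the definition of $l^\alpha_\varepsilon$ balance, namely at $s=\tau(\varepsilon)$, since $t^{-1}\tau(\varepsilon)\leq t^{-\alpha}$ precisely when $s\geq \tau(\varepsilon)^{1/(1-\alpha)}$ — but it is cleaner to use the cruder sufficient cut $s=\tau(\varepsilon)$ (valid once $\tau(\varepsilon)$ is small). On $(0,\tau(\varepsilon))$ we bound $l^\alpha_\varepsilon(s)\leq s^{-\alpha}$, and on $(\tau(\varepsilon),t)$ (or $(\tau(\varepsilon),\infty)$) we bound $l^\alpha_\varepsilon(s)\leq s^{-1}\tau(\varepsilon)$. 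This replaces the minimum by an explicit piecewise expression that can be integrated directly.

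For the first estimate, I would write $\int_0^t(t-s)^{-\gamma}l^\alpha_\varepsilon(s)\,ds$ as the sum of an integral over $(0,\tau(\varepsilon))$, where $(t-s)^{-\gamma}\leq (t/2)^{-\gamma}=2^\gamma t^{-\gamma}$ provided $\tau(\varepsilon)\leq t/2$ (the complementary regime $\tau(\varepsilon)>t/2$ being handled by the $s^{-\alpha}$ bound throughout), giving a contribution bounded by $2^\gamma t^{-\gamma}\int_0^{\tau(\varepsilon)}s^{-\alpha}ds=\frac{2^\gamma}{1-\alpha}t^{-\gamma}\tau(\varepsilon)^{1-\alpha}\leq \frac{2^\gamma}{1-\alpha}t^{-\gamma}\tau(\varepsilon)$; plus an integral over $(\tau(\varepsilon),t)$ bounded by $2^\gamma t^{-\gamma}\tau(\varepsilon)\int_{\tau(\varepsilon)}^t s^{-1}ds=2^\gamma t^{-\gamma}\tau(\varepsilon)(\log t-\log\tau(\varepsilon))\leq 2^\gamma t^{-\gamma}\tau(\varepsilon)(|\log t|+|\log\tau(\varepsilon)|)$. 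Adding these and absorbing the $\frac{1}{1-\alpha}$ and the $\frac{1}{1-\gamma}$ (which appears if instead one keeps $(t-s)^{-\gamma}$ under the integral on the first piece rather than pulling out the max) yields the claimed bound. The second estimate follows the same split with $(t-s)^{-\gamma}$ replaced by $e^{-as}\leq 1$: the $(0,\tau(\varepsilon))$ piece gives $\frac{1}{1-\alpha}\tau(\varepsilon)^{1-\alpha}\leq \frac{1}{1-\alpha}\tau(\varepsilon)$ and the $(\tau(\varepsilon),t)$ piece gives $\tau(\varepsilon)(|\log t|+|\log\tau(\varepsilon)|)$, and the factor $2$ comfortably covers the sum. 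For the third, with $a\geq 1$ and integrating to $\infty$, I would again split at $\tau(\varepsilon)$: the first piece contributes $\frac{1}{1-\alpha}\tau(\varepsilon)^{1-\alpha}$, while the tail is $\tau(\varepsilon)\int_{\tau(\varepsilon)}^\infty e^{-as}s^{-1}ds\leq \tau(\varepsilon)\big(\int_{\tau(\varepsilon)}^1 s^{-1}ds+\int_1^\infty e^{-as}ds\big)\leq \tau(\varepsilon)\big(|\log\tau(\varepsilon)|+1\big)$, and since for $\tau(\varepsilon)$ small $1\leq |\log\tau(\varepsilon)|$ and $\tau(\varepsilon)^{1-\alpha}\leq |\log\tau(\varepsilon)|\tau(\varepsilon)/(\text{something})$ is not quite right — rather $\tau(\varepsilon)^{1-\alpha}=\tau(\varepsilon)\cdot\tau(\varepsilon)^{-\alpha}$, which is \emph{not} dominated by $\tau(\varepsilon)|\log\tau(\varepsilon)|$; so here I must be more careful and use the sharp threshold $s=\tau(\varepsilon)^{1/(1-\alpha)}$ instead.

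Let me restate the correct cut: the two branches of $l^\alpha_\varepsilon$ cross at $s_*=\tau(\varepsilon)^{1/(1-\alpha)}$. On $(0,s_*)$ use $l^\alpha_\varepsilon(s)\leq s^{-\alpha}$, giving $\int_0^{s_*}s^{-\alpha}ds=\frac{1}{1-\alpha}s_*^{1-\alpha}=\frac{1}{1-\alpha}\tau(\varepsilon)$; on $(s_*,\infty)$ use $l^\alpha_\varepsilon(s)\leq s^{-1}\tau(\varepsilon)$, and since $s_*\to 0$ we have (for $a\geq 1$) $\int_{s_*}^\infty e^{-as}s^{-1}ds\leq \int_{s_*}^1 s^{-1}ds+\int_1^\infty e^{-s}ds\leq |\log s_*|+1=\frac{1}{1-\alpha}|\log\tau(\varepsilon)|+1\leq \frac{2}{1-\alpha}|\log\tau(\varepsilon)|$ once $\tau(\varepsilon)$ is small enough. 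Hence the total is at most $\frac{1}{1-\alpha}\tau(\varepsilon)+\frac{2}{1-\alpha}|\log\tau(\varepsilon)|\tau(\varepsilon)$, and since $\tau(\varepsilon)\leq |\log\tau(\varepsilon)|\tau(\varepsilon)$ for $\tau(\varepsilon)$ small, this is $\leq \frac{2}{1-\alpha}|\log\tau(\varepsilon)|\tau(\varepsilon)$ after adjusting the implied smallness of $\varepsilon$; the same sharp-threshold bookkeeping (with the extra $(t-s)^{-\gamma}$ or $e^{-as}$ factor handled as above) upgrades the first two estimates to exactly the stated constants. The only genuine subtlety — and the step I expect to need the most care — is this constant-chasing across the threshold, together with making sure the inequalities $\tau(\varepsilon)^{1-\alpha}\leq \tau(\varepsilon)$ and $\tau(\varepsilon)\leq |\log\tau(\varepsilon)|\tau(\varepsilon)$ (and, for the first estimate, that $\tau(\varepsilon)\leq t/2$ can be assumed or else the $t$-dependent logarithm absorbs the remainder) are uniformly valid; everything else is elementary integration of powers and exponentials.
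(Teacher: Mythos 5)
Your overall strategy -- split the integral at the crossover of the two branches of $l^\alpha_\varepsilon$ and integrate each branch elementarily -- is exactly the paper's, and your self-correction to the sharp threshold $s_*=\tau(\varepsilon)^{1/(1-\alpha)}$ is essential: your first attempt, cutting at $s=\tau(\varepsilon)$ and invoking $\tau(\varepsilon)^{1-\alpha}\leq\tau(\varepsilon)$, is false for $\tau(\varepsilon)<1$, as you yourself noticed. With the sharp cut, your third estimate (and, with $e^{-as}\leq 1$, the second) goes through up to the same harmless ``take $\varepsilon$ small so that $|\log\tau(\varepsilon)|$ is large'' bookkeeping that the paper also uses implicitly.

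There is, however, a genuine gap in your first estimate. On the interval $(s_*,t)$ you bound $(t-s)^{-\gamma}\leq (t/2)^{-\gamma}=2^\gamma t^{-\gamma}$ uniformly, but for $\gamma>0$ this pointwise bound fails near the endpoint $s=t$, where $(t-s)^{-\gamma}\to+\infty$; hence the step $\int_{s_*}^t (t-s)^{-\gamma}s^{-1}\tau(\varepsilon)\,ds\leq 2^\gamma t^{-\gamma}\tau(\varepsilon)\int_{s_*}^t s^{-1}ds$ is not justified as written, and your parenthetical about keeping $(t-s)^{-\gamma}$ under the integral concerns the piece near $s=0$, where nothing is singular, not the problematic endpoint $s=t$. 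The repair is precisely the paper's extra split: for $t\geq 2s_*$, treat $(s_*,t/2)$ as you do (there $t-s\geq t/2$, giving the logarithmic term), but on $(t/2,t)$ instead bound $s^{-1}\leq 2/t$ and integrate $(t-s)^{-\gamma}$, which produces the additional term $\frac{2^\gamma}{1-\gamma}t^{-\gamma}\tau(\varepsilon)$; this, together with the regime $t\leq 2s_*$ handled by the $s^{-\alpha}$ bound throughout (as you indicate), is then absorbed into the stated constant times the factor $|\log t|+|\log\tau(\varepsilon)|$. With that additional split your argument coincides with the paper's proof.
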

\begin{proof}
To prove the first estimate, we divide the analysis in several cases.  First, if $0<t\leq 2\tau(\eps)^{\frac{1}{1-\alpha}}$, we have
$$\int_0^t(t-s)^{-\gamma}l^\alpha_\eps(s)ds\leq \int_0^t(t-s)^{-\gamma}s^{-\alpha}ds=t^{-\gamma+1-\alpha}\int_0^1(1-z)^{-\gamma}z^{-\alpha}dz$$
where we have performed the change of variables $s=tz$ in the integral.  Hence,
$$\int_0^t(t-s)^{-\gamma}l^\alpha_\eps(s)ds\leq Ct^{-\gamma} t^{1-\alpha}\leq Ct^{-\gamma}\tau(\eps).$$

Second, if $2\tau(\eps)^{\frac{1}{1-\alpha}}\leq t$, then 
$$\int_0^t(t-s)^{-\gamma}l^\alpha_\eps(s)ds\leq \int_0^{\tau(\eps)^{\frac{1}{1-\alpha}}}(t-s)^{-\gamma} s^{-\alpha}ds+\int_{\tau(\eps)^{\frac{1}{1-\alpha}}}^{t/2}(t-s)^{-\gamma}s^{-1}\tau(\eps)ds$$
$$+\int_{t/2}^t (t-s)^{-\gamma}s^{-1}\tau(\eps)ds=I_1+I_2+I_3.$$
We study each term separately. For the first one, $I_1$, note that if $t\geq 2\tau(\eps)^{\frac{1}{1-\alpha}}$ and $s\in [0, \tau(\eps)^{\frac{1}{1-\alpha}}]$ then $t-s\geq \frac{t}{2}$. So,
$$I_1\leq \left(\frac{t}{2}\right)^{-\gamma}\int_0^{\tau(\eps)^{\frac{1}{1-\alpha}}}s^{-\alpha}ds\leq 2^\gamma t^{-\gamma}\frac{1}{1-\alpha}\tau(\eps),$$
$$I_2\leq (t/2)^{-\gamma} (\log(t/2)-\log(\tau(\eps)^{\frac{1}{1-\alpha}}))\tau(\eps)\leq 2^\gamma t^{-\gamma}(|\log(t)| +\frac{1}{1-\alpha}|\log(\tau(\eps))|)\tau(\eps),$$
$$I_3\leq t^{-\gamma}\int_{1/2}^1(1-z)^{-\gamma}z^{-1}dz \tau(\eps)\leq \frac{2^\gamma}{1-\gamma}t^{-\gamma}\tau(\varepsilon)\leq\frac{2^\gamma}{1-\gamma}\frac{1}{1-\alpha}t^{-\gamma}\tau(\varepsilon).$$
Putting together the three estimates we show the desired estimate,
$$\int_0^t (t-s)^{-\gamma}l^\alpha_\varepsilon(s)ds\leq \frac{2^\gamma}{(1-\gamma)(1-\alpha)}t^{-\gamma}(|\log(t)|+|\log(\tau(\varepsilon))|)\tau(\varepsilon).$$
For the second estimate, we proceed as follows,
$$\int_0^t e^{-as}l^\alpha_\varepsilon(s)ds=\int_0^{\tau(\varepsilon)^{\frac{1}{1-\alpha}}}e^{-as}s^{-\alpha}ds+\int_{\tau(\varepsilon)^{\frac{1}{1-\alpha}}}^t e^{-as}s^{-1}\tau(\varepsilon)\leq $$
$$\leq\frac{1}{1-\alpha}\tau(\varepsilon)+ e^{-a\tau(\varepsilon)^{\frac{1}{1-\alpha}}}\tau(\varepsilon)\left|\log(t)-\left(\frac{1}{1-\alpha}\right)\log(\tau(\varepsilon))\right|\leq $$
$$\leq \frac{2}{1-\alpha}(|\log(t)|+|\log(\tau(\varepsilon))|)\tau(\varepsilon),$$
as we wanted to prove.
For the last one, we write,
$$\int_0^\infty e^{-as}l^\alpha_\varepsilon(s)ds=\int_0^{{\tau(\varepsilon)}^{\frac{1}{1-\alpha}}}s^{-\alpha}ds+\int_{{\tau(\varepsilon)}^{\frac{1}{1-\alpha}}}^1s^{-1}\tau(\varepsilon)ds+\tau(\varepsilon)\int_1^\infty e^{-as}s^{-1}ds=$$
$$=\frac{\tau(\varepsilon)}{1-\alpha}+\frac{1}{1-\alpha}|\log(\tau(\varepsilon))|\tau(\varepsilon)+\frac{e^{-a}}{a}\tau(\varepsilon)\leq \frac{2e^{-a}}{a(1-\alpha)}|\log(\tau(\varepsilon))|\tau(\varepsilon).$$
Note that, if $a\geq 1$ then,
$$\int_0^\infty e^{-as}l^\alpha_\varepsilon(s)ds\leq\frac{2}{1-\alpha}|\log(\tau(\varepsilon))|\tau(\varepsilon).$$
This concludes the proof of the result. \end{proof}

\begin{re}\label{simple-estimate}
If  $t=1$, the first  estimate is simplified to
\begin{equation}\label{first-simple-estimate}
\int_0^t(t-s)^{-\gamma}l^\alpha_\eps(s)ds\leq \frac{2^\gamma}{(1-\gamma)(1-\alpha)}|\log(\tau(\eps))| \tau(\eps).
\end{equation}
\end{re}

\section{Existence of Inertial Manifolds}
\label{existence}

Our objective in this section is to construct inertial manifolds $\mathcal{M}_\varepsilon$, for each $0\leq\varepsilon\leq\varepsilon_0$, which will be invariant manifolds for the semi flow generated by  (\ref{problemalimite}) and (\ref{problemaperturbado}), therefore proving Proposition \ref{existenciavariedadinercial}. For this purpose, we will use the Lyapunov-Perron method, see \cite{Sell&You}. This method consists in constructing the inertial manifold as the graph of a Lipschitz map, which is obtained as the fixed point of an appropriate transformation. For that, observe that Lemma \ref{spectrum-behavior} and Remark \ref{autovalores} give us that if the operator $A_0$ has spectral gap, then the operator $A_\varepsilon$ will also have it for $\varepsilon$ small enough. This spectral gap is essential in the construction of the inertial manifold.

To obtain these inertial manifolds $\mathcal{M}_\varepsilon$, $0\leq\varepsilon\leq\varepsilon_0$, consider $m\in\mathbb{N}$ such that $\lambda_m^0<\lambda_{m+1}^0$ (and therefore $\lambda_m^\eps<\lambda_{m+1}^\eps$ for $\eps$ small enough) and denote by $\mathbf{P}_{\mathbf{m}}^{\bm\varepsilon}$ the canonical orthogonal projection onto the eigenfunctions, $\{\varphi^\varepsilon_i\}_{i=1}^m$, corresponding to the first $m$ eigenvalues of the operator $A_\varepsilon $, $0\leq\varepsilon\leq\varepsilon_0$ and $\mathbf{Q}^{\bm\varepsilon}_{\mathbf{m}}$ its orthogonal complement, see (\ref{projectionP}) and (\ref{projectionP0}). The Lyapunov-Perron method obtains $\mathcal{M}_\eps$  as the graph of a function $\Psi_\eps: \mathbf{P_m^\eps} X_\eps^\alpha\to \mathbf{Q_m^\eps} X_\eps^\alpha$ which is obtained as a fixed point of the functional
 \begin{equation}\label{definition-Teps}
 (\mathbf{T}_{\bm \varepsilon}\Psi_\varepsilon)(p^0)=\int_{-\infty}^0e^{A_\varepsilon\mathbf{Q}^{\bm\varepsilon}_{\mathbf{m}} s}\mathbf{Q^\eps_m} F_\eps({p}(s)+\Psi_\varepsilon({p}(s)))ds,
 \end{equation}
where $p(s)\in [\varphi_1^\eps,\ldots,\varphi_m^\eps]$ is the globally defined solution of 
\begin{equation}\label{equationp}
\left\{
\begin{array}{l}
p_t=-A_\varepsilon p+\mathbf{P_m^\eps} F_\eps(p+\Psi_\epsilon(p(t)))\\
p(0)=p^0.
\end{array}
\right.
\end{equation}

Following \cite{Sell&You} it can be seen that:

\begin{prop}\label{existence-inertial-manifold}
Assume hypotheses {\bf (H1)} and {\bf (H2)} are satisfied.  If $m$ is such that 
$$\lambda_{m+1}^0-\lambda_m^0\geq 12L_F[(\lambda_{m+1}^0)^\alpha+(\lambda_{m}^0)^\alpha]$$
$$(\lambda_m^0)^{1-\alpha}\geq 24L_F(1-\alpha)^{-1}$$
then equation \eqref{problemaperturbado} has an inertial manifold $\mathcal{M}_\eps$ given as the graph of a Lipschitz function $\Psi_\eps:[\varphi_1^\eps,\ldots,\varphi_m^\eps]\to \mathbf{Q_m^\eps} X_\eps$ satisfying  
$$\hbox{supp}(\Psi_\eps)\subset \{ \phi\in \mathbf{P_m^\eps} X_\eps^\alpha, \|\phi\|_{X_\eps^\alpha}\leq R\}$$
$$\|\Psi_\eps (p)\|_{X_\eps^\alpha}\leq L_0$$
$$\|\Psi_\eps(p)-\Psi_\eps(p')\|_{X_\eps^\alpha}\leq L_1\|p-p'\|_{X_\eps^\alpha}$$
for certain $L_0$, $L_1$ independent of $\eps$. 

\end{prop}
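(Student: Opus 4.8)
The plan is to obtain $\mathcal{M}_\eps$, for each fixed $0\le\eps\le\eps_0$, by the Lyapunov--Perron scheme of \cite{Sell&You} applied to $(P_\eps)$, and to track the constants so that they come out independent of $\eps$. The first step is to pass from the gap hypotheses on $A_0$ to uniform gap hypotheses on the family $A_\eps$: by Remark \ref{continuity} the eigenvalues satisfy $\lambda_m^\eps\to\lambda_m^0$ and $\lambda_{m+1}^\eps\to\lambda_{m+1}^0$ as $\eps\to 0$ (so in particular $\lambda_m^\eps<\lambda_{m+1}^\eps$ for $\eps$ small), hence after shrinking $\eps_0$ and relaxing the numerical constants a little, the inequalities $\lambda_{m+1}^\eps-\lambda_m^\eps\ge 11L_F[(\lambda_{m+1}^\eps)^\alpha+(\lambda_m^\eps)^\alpha]$ and $(\lambda_m^\eps)^{1-\alpha}\ge 22 L_F(1-\alpha)^{-1}$ hold for all $0\le\eps\le\eps_0$, with $L_F$ the uniform Lipschitz constant of {\bf (H2)}(b). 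From now on $\eps$ is fixed and one works in the complete metric space $\mathcal{F}_L$ with the $L^\infty(\R^m,X_\eps^\alpha)$ distance.

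Next I would set up the transformation $\mathbf{T}_{\bm\eps}$ of \eqref{definition-Teps}. For $\Psi_\eps\in\mathcal{F}_L$ and $p^0\in\mathbf{P}_{\mathbf{m}}^{\bm\eps}X_\eps^\alpha$, equation \eqref{equationp} has a unique solution on all of $(-\infty,0]$ because $\mathbf{P}_{\mathbf{m}}^{\bm\eps}F_\eps(\cdot+\Psi_\eps(\cdot))$ is bounded and globally Lipschitz on the finite-dimensional space $\mathbf{P}_{\mathbf{m}}^{\bm\eps}X_\eps^\alpha$ (its backward growth is at most exponential, with rate $\lambda_m^\eps$, by a Gronwall argument), so $\mathbf{T}_{\bm\eps}\Psi_\eps$ is well defined. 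Then one checks that $\mathbf{T}_{\bm\eps}$ maps $\mathcal{F}_L$ into itself. The boundedness $\|(\mathbf{T}_{\bm\eps}\Psi_\eps)(p^0)\|_{X_\eps^\alpha}\le L_0$ follows from {\bf (H2)}(a) and the decay estimate of Lemma \ref{estimacionsemigrupolineal} for $e^{A_\eps\mathbf{Q}_{\mathbf{m}}^{\bm\eps}s}$ ($s<0$), the resulting convergent integral being bounded uniformly in $\eps$ thanks to the lower bound on $\lambda_{m+1}^\eps$. The Lipschitz bound $\|(\mathbf{T}_{\bm\eps}\Psi_\eps)(p^0_1)-(\mathbf{T}_{\bm\eps}\Psi_\eps)(p^0_2)\|_{X_\eps^\alpha}\le L_1\,|j_\eps(p^0_1)-j_\eps(p^0_2)|_\alpha$ with $L_1\le L<1$ is the heart of the matter: one estimates $\|p_1(s)-p_2(s)\|_{X_\eps^\alpha}$ backward in $s$ in terms of $|j_\eps(p^0_1)-j_\eps(p^0_2)|_\alpha$ (using {\bf (H2)}(b), $\Psi_\eps\in\mathcal{F}_L$ and Gronwall on \eqref{equationp}) and inserts this into \eqref{definition-Teps}; the resulting constant is made $<1$ precisely by the relaxed gap inequality above, which forces the backward growth rate of the low modes to be dominated by the decay rate $\lambda_{m+1}^\eps$ of the high modes. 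Finally the support property is obtained from an a priori lower bound: if $\|p^0\|_{X_\eps^\alpha}$ exceeds a fixed radius, then $\|p(s)\|_{X_\eps^\alpha}>R$ for every $s\le 0$, so the integrand in \eqref{definition-Teps} vanishes identically and $(\mathbf{T}_{\bm\eps}\Psi_\eps)(p^0)=0$; since $\mathrm{supp}\,F_\eps\subset D_R$ uniformly in $\eps$, this gives $\mathrm{supp}(\mathbf{T}_{\bm\eps}\Psi_\eps)\subset B_R$ with $R$ independent of $\eps$.

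With $\mathbf{T}_{\bm\eps}:\mathcal{F}_L\to\mathcal{F}_L$ in hand, a nearly identical computation, estimating $\|(\mathbf{T}_{\bm\eps}\Psi_\eps^1)(p^0)-(\mathbf{T}_{\bm\eps}\Psi_\eps^2)(p^0)\|_{X_\eps^\alpha}$ in terms of $\|\Psi_\eps^1-\Psi_\eps^2\|_{L^\infty(\R^m,X_\eps^\alpha)}$ through the difference of the corresponding solutions of \eqref{equationp}, shows that $\mathbf{T}_{\bm\eps}$ is a contraction, with contraction factor $<1$ uniform in $\eps$ (again by the gap condition). The Banach fixed point theorem then yields a unique $\Psi_\eps\in\mathcal{F}_L$ with $\mathbf{T}_{\bm\eps}\Psi_\eps=\Psi_\eps$; identifying $\mathbf{P}_{\mathbf{m}}^{\bm\eps}X_\eps^\alpha$ with $\R^m$ via $j_\eps$ gives the function of Proposition \ref{existenciavariedadinercial} and the manifold $\mathcal{M}_\eps$, and the three displayed properties are exactly membership in $\mathcal{F}_L$ together with the bound $L_0$. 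Invariance of $\mathcal{M}_\eps$ under the semiflow of $(P_\eps)$ and its exponential attraction (with rate controlled by $\lambda_{m+1}^\eps-\lambda_m^\eps$, hence uniform in $\eps$) then follow verbatim from \cite{Sell&You}. I expect the genuine difficulty to be not any individual estimate but the uniformity bookkeeping --- ensuring that $L$, $L_0$, $L_1$, the contraction factor and $\eps_0$ are all $\eps$-independent, which rests entirely on the spectral continuity of Section \ref{linear} (turning the gap hypothesis for $A_0$ into a uniform gap hypothesis for the family $A_\eps$) and on the uniform bounds of {\bf (H2)} --- together with the verification of the support property for the backward solutions of \eqref{equationp}.
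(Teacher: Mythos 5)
Your proposal is correct and takes essentially the same route as the paper: the paper's proof consists exactly of the observation that, by the spectral continuity of Remark \ref{continuity}, the gap conditions transfer to $A_\eps$ for $\eps$ small (with relaxed constants, as in \eqref{gaps-eps}), after which it simply cites \cite{Sell&You} for the Lyapunov--Perron construction. The additional detail you supply (the fixed-point argument for $\mathbf{T}_{\bm\eps}$ on $\mathcal{F}_L$, the support property, and the uniformity bookkeeping) is precisely the content the paper delegates to that reference.
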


\begin{proof} Observe that if $m$ is such that the gap conditions of the proposition hold, then for $\eps$ small enough, see Remark \ref{continuity}, we have 
\begin{equation}\label{gaps-eps}
\begin{array}{l} 
(\lambda_m^\eps)^{1-\alpha}\geq 12L_F(1-\alpha)^{-1} \\
\lambda_{m+1}^\eps-\lambda_m^\eps\geq 6L_F[(\lambda_{m+1}^\eps)^\alpha+(\lambda_{m}^\eps)^\alpha]
\end{array}
\end{equation}
which are the gap conditions needed in \cite{Sell&You} to obtain the inertial manifolds for each $\eps$ small enough. \end{proof}

\par\bigskip  With the definition of the isomorphism $j_\eps$, \eqref{definition-jeps}, we may define now the inertial manifolds $\Phi_\eps:\mathbb{R}^m\to \mathbf{Q_m^\eps} X_\eps^\alpha$ as  $\Phi_\eps=\Psi_\eps\circ j_\eps^{-1}$.  Notice also that since $\Psi_\eps$ is a fixed point of $\mathbf{T}_{\bm \eps}$, then the function $\Phi_\eps$ satisfies,

 \begin{equation}\label{definition-Teps}
 (\mathbf{T}_{\bm \varepsilon}\Phi_\varepsilon)(\bar p^0)=\int_{-\infty}^0e^{A_\varepsilon\mathbf{Q}^{\bm\varepsilon}_{\mathbf{m}} s}\mathbf{Q^{\bm \eps}_m} F_\eps\big({p}(s)+\Phi_\varepsilon(j_\eps({p}(s)))\big)ds,
 \end{equation}
where $p(s)$ is the solution of \eqref{equationp} with $p^0=j_\eps^{-1}(\bar p^0)$ or equivalently, $p(s)$ is the solution of
\begin{equation}\label{equationp-modified}
\left\{
\begin{array}{l}
p_t=-A_\varepsilon p+\mathbf{P_m^{\bm \eps}} F_\eps(p+\Phi_\epsilon\circ j_\eps (p(t)))\\
p(0)=j_\eps^{-1}(\bar p^0).
\end{array}
\right.
\end{equation}

 It is an easy exercise now to show that these functions $\Phi_\eps$ are the inertial manifolds from Proposition \ref{existenciavariedadinercial}.

\section{Rate of convergence of the inertial manifolds}
\label{distance}
Once we have proved the existence of the inertial manifolds $\mathcal{M}_{\varepsilon}$, $\varepsilon\geq 0$ and therefore we have fixed the value of $m$,  we are interested in obtaining the rate of convergence of these inertial manifolds as $\varepsilon\rightarrow 0$. To accomplish this, we will need to subtract the integral expressions \eqref{definition-Teps} for $\eps=0$ and $\eps>0$ and make several estimates on these differences. Therefore, we will need first to obtain good estimates on the behavior of the semigroup acting in the spaces $\mathbf{P_m^{\bm\eps}}X_\eps^\alpha$ and  $\mathbf{Q_m^{\bm \eps}}X_\eps^\alpha$.  

Since the value of $m$ is fixed and we have the gap condition from Proposition \ref{existence-inertial-manifold} without loss of generality we will assume that $\lambda_{m+1}^\eps-\lambda_m^\eps\geq 3$ for all $0\leq \eps\leq \eps_0$. This allows us to construct the  following rectangular curve, encircling the first $m$ eigenvalues: 
$$\Gamma=\Gamma^1\cup\Gamma^2\cup\Gamma^3\cup\Gamma^4,$$
where,
$$\Gamma^1=\{\lambda\in\mathbb{C}: Re(\lambda)=-\lambda_1^0+1\,\,\textrm{and}\,\,  |Im(\lambda)|\leq 1\},$$
$$\Gamma^2=\{\lambda\in\mathbb{C}: -\lambda_m^0-1\leq Re(\lambda)\leq-\lambda_1^0+1\,\,\textrm{and}\,\, Im(\lambda)=1\},$$
$$\Gamma^3=\{\lambda\in\mathbb{C}: Re(\lambda)=-\lambda_m^0-1\,\,\textrm{and}\,\, |Im(\lambda)|\leq 1\},$$
$$\Gamma^4=\{\lambda\in\mathbb{C}: -\lambda_m^0-1\leq Re(\lambda)\leq-\lambda_1^0+1\,\,\textrm{and}\,\, Im(\lambda)=-1\}.$$

We can prove now,
\begin{lem}\label{estimacionsemigruposlinealesproyectadosP}
Let hypothesis {\bf (H1)} be satisfied and let $\Gamma$ the curve defined above. Then,
$$\|e^{-A_\varepsilon t}\mathbf{P}_{\mathbf{m}}^{\bm\varepsilon}E-E e^{-A_0 t}\mathbf{P}_{\mathbf{m}}^{\mathbf{0}}\|_{\mathcal{L}(X_0, X^\alpha_\varepsilon)}\leq C_4 e^{-(\lambda_m^0+1)t}\tau(\varepsilon),\qquad \,\,t\leq 0,$$
with $C_4=\frac{|\Gamma|}{2\pi}\sup_{\lambda\in\Gamma} C^\varepsilon_3(\lambda).$
\end{lem}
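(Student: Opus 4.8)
The plan is to exploit that $\mathbf{P}_{\mathbf{m}}^{\bm\varepsilon}$ has $m$-dimensional range, so that $e^{-A_\varepsilon t}\mathbf{P}_{\mathbf{m}}^{\bm\varepsilon}$ makes sense for \emph{all} $t\in\mathbb{R}$ (in particular for $t\le 0$) and, because $\Gamma$ is a bounded curve, admits a Dunford representation. First I would check, using Lemma \ref{spectrum-behavior} with $K_0=\Gamma$ together with the convergence of the eigenvalues in Remark \ref{continuity} and the standing normalization $\lambda_{m+1}^\varepsilon-\lambda_m^\varepsilon\ge 3$, that for $\varepsilon$ small enough $\Gamma\subset\rho(-A_\varepsilon)$ and $\Gamma$ encircles exactly the eigenvalues $-\lambda_1^\varepsilon,\dots,-\lambda_m^\varepsilon$ of $-A_\varepsilon$ (and $-\lambda_1^0,\dots,-\lambda_m^0$ of $-A_0$). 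Then, from the spectral expansion of the resolvent and the residue theorem applied term by term, for every $t\in\mathbb{R}$ and $0\le\varepsilon\le\varepsilon_0$,
\[
e^{-A_\varepsilon t}\mathbf{P}_{\mathbf{m}}^{\bm\varepsilon}=\frac{1}{2\pi i}\int_{\Gamma}(\lambda I+A_\varepsilon)^{-1}e^{\lambda t}\,d\lambda=\sum_{i=1}^{m}e^{-\lambda_i^\varepsilon t}(\cdot,\varphi_i^\varepsilon)\varphi_i^\varepsilon,
\]
the integral being a genuine (Bochner) integral of a continuous $\mathcal{L}(X_\varepsilon,X_\varepsilon^\alpha)$-valued function over the compact curve $\Gamma$.

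Subtracting this identity for $\varepsilon=0$ and for $\varepsilon>0$ (the curve $\Gamma$ is the same in both cases, and composing with $E$ on the appropriate side), I would obtain
\[
e^{-A_\varepsilon t}\mathbf{P}_{\mathbf{m}}^{\bm\varepsilon}E-Ee^{-A_0 t}\mathbf{P}_{\mathbf{m}}^{\mathbf{0}}=\frac{1}{2\pi i}\int_{\Gamma}[(\lambda I+A_\varepsilon)^{-1}E-E(\lambda I+A_0)^{-1}]e^{\lambda t}\,d\lambda.
\]
Taking $\mathcal{L}(X_0,X_\varepsilon^\alpha)$-norms and estimating under the integral sign, Lemma \ref{estimacionlandas} bounds the bracket by $C_3^\varepsilon(\lambda)\tau(\varepsilon)$. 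For $t\le 0$ and $\lambda\in\Gamma$ one has $\mathrm{Re}(\lambda)\ge-\lambda_m^0-1$, hence $|e^{\lambda t}|=e^{\mathrm{Re}(\lambda)t}\le e^{-(\lambda_m^0+1)t}$. Multiplying these two bounds, pulling $\sup_{\lambda\in\Gamma}C_3^\varepsilon(\lambda)$ (finite and, by Corollary \ref{C3} and Remark \ref{C3acotado}, uniform in $\varepsilon$) out of the integral, and using $\int_{\Gamma}|d\lambda|=|\Gamma|$, yields precisely $C_4\,e^{-(\lambda_m^0+1)t}\tau(\varepsilon)$ with $C_4=\frac{|\Gamma|}{2\pi}\sup_{\lambda\in\Gamma}C_3^\varepsilon(\lambda)$, which is the claim.

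The only point requiring some care, rather than a genuine difficulty, is the validity of the contour representation for negative $t$: because $\mathbf{P}_{\mathbf{m}}^{\bm\varepsilon}$ has finite rank, the relevant spectrum consists of the $m$ points $-\lambda_i^\varepsilon$, which can be surrounded by the \emph{bounded} curve $\Gamma$, and over such a curve the integral $\frac{1}{2\pi i}\int_{\Gamma}(\lambda I+A_\varepsilon)^{-1}e^{\lambda t}\,d\lambda$ converges and represents $e^{-A_\varepsilon t}\mathbf{P}_{\mathbf{m}}^{\bm\varepsilon}$ for every $t\in\mathbb{R}$ — unlike the usual formula for $e^{-A_\varepsilon t}$ over an unbounded sectorial contour, which needs $t>0$. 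One also has to verify that $\Gamma$ separates the first $m$ eigenvalues of $-A_\varepsilon$ from the rest of $\sigma(-A_\varepsilon)$ uniformly for small $\varepsilon$, which follows from the spectral convergence of Remark \ref{continuity} and the gap normalization $\lambda_{m+1}^\varepsilon-\lambda_m^\varepsilon\ge 3$. Once these are in place the estimate follows at once from Lemma \ref{estimacionlandas}.
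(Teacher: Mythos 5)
Your proposal is correct and follows essentially the same route as the paper: represent $e^{-A_\varepsilon t}\mathbf{P}_{\mathbf{m}}^{\bm\varepsilon}E-Ee^{-A_0 t}\mathbf{P}_{\mathbf{m}}^{\mathbf{0}}$ as a Dunford integral of the resolvent difference over the bounded curve $\Gamma$, bound the integrand via Lemma \ref{estimacionlandas}, and estimate $\sup_{\lambda\in\Gamma}e^{\mathrm{Re}(\lambda)t}\le e^{-(\lambda_m^0+1)t}$ for $t\le 0$. The extra care you take (finite rank making the representation valid for $t\le 0$, and $\Gamma$ enclosing exactly the first $m$ eigenvalues of $-A_\varepsilon$ for small $\varepsilon$) is implicit in the paper via Lemma \ref{estimacionProyecciones} and Remark \ref{continuity}, so there is no substantive difference.
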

\begin{proof}

Since the curve $\Gamma$ contains the first $m$ eigenvalues of $-A_\eps$, $0\leq \eps\leq \eps_0$, then
$$e^{-A_\varepsilon t}\mathbf{P}_{\mathbf{m}}^{\bm\varepsilon}E-E e^{-A_0 t}\mathbf{P}_{\mathbf{m}}^{\mathbf{0}}=\frac{1}{2\pi i}\int_{\Gamma}\left((\lambda I+A_\varepsilon)^{-1}E-E(\lambda I+A_0)^{-1}\right)e^{\lambda t}d\lambda.$$
So,
$$\|e^{-A_\varepsilon t}\mathbf{P}_{\mathbf{m}}^{\bm\varepsilon}E-E e^{-A_0 t}\mathbf{P}_{\mathbf{m}}^{\mathbf{0}}\|_{\mathcal{L}(X_0, X_\varepsilon^\alpha)}$$
$$\leq\frac{1}{2\pi}\int_{\Gamma}\|(\lambda I+A_\varepsilon)^{-1}E-E(\lambda I+A_0)^{-1}\|_{\mathcal{L}(X_0, X_\varepsilon^\alpha)}|e^{\lambda t}|d\lambda.$$
Applying Lemma \ref{estimacionlandas}, for $t\leq 0$ we have,
$$\|e^{-A_\varepsilon t}\mathbf{P}_{\mathbf{m}}^{\bm\varepsilon}E-E e^{-A_0 t}\mathbf{P}_{\mathbf{m}}^{\mathbf{0}}\|_{\mathcal{L}(X_0, X_\varepsilon^\alpha)}$$
$$\leq\frac{|\Gamma|}{2\pi}\sup_{\lambda\in\Gamma} C^\varepsilon_3(\lambda)\tau(\varepsilon)\sup_{\lambda\in\Gamma}e^{Re (\lambda) t}=C_4e^{-(\lambda_m^0+1)t}\tau(\varepsilon),$$
with $C_4=\frac{|\Gamma|}{2\pi}\sup_{\lambda\in\Gamma} C^\varepsilon_3(\lambda)$ and $|\Gamma|$ the length of the curve $\Gamma$.
\end{proof}
\vspace{0.4cm}

With respect to the behavior of the linear semigroup in the subspace $\mathbf{Q}^{\bm\varepsilon}_{\mathbf{m}}X_\eps^\alpha$, notice that we have the expression 
$$e^{-A_\varepsilon \mathbf{Q}^{\bm\varepsilon}_{\mathbf{m}} t}u=\sum_{i=m+1}^\infty e^{-\lambda_i^\varepsilon t}(u, \varphi_i^\varepsilon)\varphi_i^\varepsilon.$$
Hence, following a similar proof as Lemma \ref{estimacionsemigrupolineal}, we get
$$\|e^{-A_\varepsilon \mathbf{Q}^{\bm\varepsilon}_{\mathbf{m}} t}\|_{\mathcal{L}(X_\varepsilon, X_\varepsilon)}\leq e^{-\lambda_{m+1}^\varepsilon t},$$ 
and,
\begin{equation}\label{semigrupoproyectado}
\|e^{-A_\varepsilon \mathbf{Q}^{\bm\varepsilon}_{\mathbf{m}} t}\|_{\mathcal{L}(X_\varepsilon, X_\varepsilon^\alpha)}\leq e^{-\lambda_{m+1}^\varepsilon t}\left(\max\{\lambda_{m+1}^\varepsilon, \frac{\alpha}{t}\}\right)^\alpha,
\end{equation}
for $t\geq 0.$
\vspace{0.5cm}

 Before continuing, we now present technical lemmas henceforward needed.  
 \begin{lem}\label{integralmaximo}
 Let $a$ be a positive constant, $a>0$, $\alpha\in(0, 1)$ and $\lambda >0$ a positive real number. We have the following estimate,
 $$\int_0^\infty e^{-as} \left(\max\{\lambda, \frac{\alpha}{s}\}\right)^\alpha ds\leq(1-\alpha)^{-1}\lambda^{\alpha-1}+\lambda^\alpha a^{-1}.$$
 \end{lem}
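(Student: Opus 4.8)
The plan is to split the integration interval at the point where the two candidates for the maximum coincide, namely at $s = \alpha/\lambda$, and estimate each piece by an elementary integral. First I would observe that $\max\{\lambda,\alpha/s\} = \alpha/s$ when $0 < s \le \alpha/\lambda$ and $\max\{\lambda,\alpha/s\} = \lambda$ when $s \ge \alpha/\lambda$. Hence
\begin{equation*}
\int_0^\infty e^{-as}\left(\max\{\lambda,\tfrac{\alpha}{s}\}\right)^\alpha ds = \int_0^{\alpha/\lambda} e^{-as}\left(\tfrac{\alpha}{s}\right)^\alpha ds + \int_{\alpha/\lambda}^\infty e^{-as}\lambda^\alpha ds.
\end{equation*}

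For the second integral, I would simply bound $e^{-as}$ above and extend/keep the interval: $\int_{\alpha/\lambda}^\infty e^{-as}\lambda^\alpha ds \le \lambda^\alpha \int_0^\infty e^{-as}ds = \lambda^\alpha a^{-1}$, which produces the second term on the right-hand side. For the first integral, I would discard the exponential by the crude bound $e^{-as}\le 1$ on $[0,\alpha/\lambda]$, leaving $\alpha^\alpha\int_0^{\alpha/\lambda} s^{-\alpha}ds$. Since $\alpha\in(0,1)$ the exponent $-\alpha$ is integrable near $0$, and $\int_0^{\alpha/\lambda} s^{-\alpha}ds = \frac{1}{1-\alpha}(\alpha/\lambda)^{1-\alpha}$, so this piece is $\frac{\alpha^\alpha}{1-\alpha}(\alpha/\lambda)^{1-\alpha} = \frac{\alpha}{1-\alpha}\lambda^{\alpha-1} \le (1-\alpha)^{-1}\lambda^{\alpha-1}$ using $\alpha \le 1$. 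Adding the two bounds gives the claimed estimate.

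There is really no serious obstacle here; the only mild subtlety is making sure the power of $\alpha$ collects correctly ($\alpha^\alpha\cdot\alpha^{1-\alpha} = \alpha$) and that one is willing to be wasteful with the exponential factor on the bounded interval, which is exactly what the stated (non-sharp) inequality permits. If a slightly tighter constant were desired one could instead substitute $u = as$ and express the first integral via an incomplete Gamma function, but that is unnecessary for the use made of this lemma in the sequel.
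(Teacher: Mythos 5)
Your proposal is correct and follows essentially the same argument as the paper: split the integral at $s=\alpha/\lambda$, drop the exponential on the first piece (using $\alpha^\alpha(\alpha/\lambda)^{1-\alpha}=\alpha\lambda^{\alpha-1}\leq\lambda^{\alpha-1}$), and bound the second piece by $\lambda^\alpha a^{-1}$. No discrepancies to report.
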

  \begin{proof}
  Let $\alpha\in(0,1)$ and $\lambda$ a  real positive number. Then we know that
  $$\max\{\lambda, \frac{\alpha}{s}\}=
  \begin{cases}
  \frac{\alpha}{s} &\mbox{if \hspace{0.3cm}} 0<s\leq\frac{\alpha} {\lambda} \\
  \lambda&\mbox{if \hspace{0.3cm}} \frac{\alpha}{\lambda}< s<\infty.
  \end{cases}
  $$
  So,
  $$\int_0^\infty\left(\max\{\lambda, \frac{\alpha}{s}\}\right)^\alpha e^{-as}ds=\int_0^{\frac{\alpha}{\lambda}} \left(\frac{\alpha}{s}\right)^\alpha e^{-as}ds+ \int_{\frac{\alpha}{\lambda}}^\infty \lambda^\alpha e^{-as}ds=$$
  $$=\alpha^\alpha\int_0^{\frac{\alpha}{\lambda}} s^{-\alpha} e^{-as}ds+ \lambda^\alpha\int_{\frac{\alpha}{\lambda}}^\infty  e^{-as}ds=$$
  $$=\alpha^\alpha\left(\frac{\alpha}{\lambda}\right)^{1-\alpha}(1-\alpha)^{-1}+\lambda^\alpha e^{-\frac{a\alpha}{\lambda}}a^{-1}\leq $$
  $$\leq(1-\alpha)^{-1}\lambda^{\alpha-1}+ \lambda^\alpha a^{-1},$$
  as we wanted to prove.\end{proof}
  
  \par\bigskip
  
  Now we want to compare both semigroups $e^{-A_\eps t}$ and $e^{-A_0 t}$ in $\mathbf{Q}_{\mathbf{m}}^{\bm\varepsilon}X_\eps^\alpha$ and $\mathbf{Q}_{\mathbf{m}}^{\bm 0}X_0^\alpha$. For this, we define first the curve
  $\Gamma_m$ which is given by the boundary of $\Sigma_{b, \phi}=\{\lambda\in\mathbb{C}: |arg(\lambda-b)|\leq\pi-\phi\}$, with $\phi=\frac{\pi}{4}$ and $b=-\lambda_{m+1}^0+1$. That is,
$$\Gamma_m=\Gamma_m^1\cup\Gamma_m^2=\{b+re^{-i(\pi-\phi)}: 0\leq r<\infty\}\cup\{b+re^{i(\pi-\phi)}: 0\leq r<+\infty\},$$
oriented such that the imaginary part grows as $\lambda$ runs in $\Gamma$.

We have the following estimates, 
\begin{lem}\label{estimacionsemigruposlinealesproyectados}
Let hypothesis {\bf (H1)} be satisfied. If, for $t>0$, as before we denote by
$$ l_\varepsilon^\alpha(t):=\min\{t^{-1}\tau(\varepsilon), t^{-\alpha}\},$$ 
then, for each   $t>0$,
$$\|e^{-A_\varepsilon t}\mathbf{Q}_{\mathbf{m}}^{\bm\varepsilon}E-E e^{-A_0 t}\mathbf{Q}_{\mathbf{m}}^{\mathbf{0}}\|_{\mathcal{L}(X_0, X^\alpha_\varepsilon)}\leq C_5 e^{-(\lambda_{m+1}^0-1) t}l_\varepsilon^\alpha(t),$$
where $C_5=\max\{\frac{\sup_{\lambda\in\Gamma_m} C_3^\eps(\lambda)}{\pi cos(\phi) }, 4\}$ and $C_3^\eps(\lambda)$ is defined in Lemma \ref{estimacionlandas}.
\end{lem}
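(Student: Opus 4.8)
The plan is to mimic the two-bound strategy used in Lemma \ref{estimacionsemigruposlineales}: produce one estimate that decays like $e^{-(\lambda_{m+1}^0-1)t}\tau(\eps)t^{-1}$ via the contour integral representation, and a second one of the form $e^{-(\lambda_{m+1}^0-1)t}t^{-\alpha}$ by estimating each semigroup separately, and then take the minimum of the two, which is exactly $e^{-(\lambda_{m+1}^0-1)t}l_\eps^\alpha(t)$ up to the constant $C_5$. First I would write, using the curve $\Gamma_m$ (which encircles the tail of the spectrum $\{\lambda_i^\eps\}_{i\geq m+1}$ of both operators for $\eps$ small, since $b=-\lambda_{m+1}^0+1$ lies strictly between $-\lambda_{m+1}^0$ and $-\lambda_m^0$ and, by Remark \ref{continuity}, the perturbed eigenvalues stay close),
$$e^{-A_\varepsilon t}\mathbf{Q}_{\mathbf{m}}^{\bm\varepsilon}E-E e^{-A_0 t}\mathbf{Q}_{\mathbf{m}}^{\mathbf{0}}=\frac{1}{2\pi i}\int_{\Gamma_m}\left((\lambda I+A_\varepsilon)^{-1}E-E(\lambda I+A_0)^{-1}\right)e^{\lambda t}d\lambda,$$
apply Lemma \ref{estimacionlandas} to bound the integrand by $C_3^\eps(\lambda)\tau(\eps)|e^{\lambda t}|$, and then compute $\int_{\Gamma_m}|e^{\lambda t}|\,|d\lambda|$. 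On $\Gamma_m$ one has $\mathrm{Re}(\lambda)=b-r\cos\phi$, so $|e^{\lambda t}| = e^{bt}e^{-r\cos\phi\, t}$ and the integral over $r\in[0,\infty)$ gives $2e^{bt}/(\cos\phi\, t)$; together with $\sup_{\Gamma_m}C_3^\eps(\lambda)$ (uniform in $\eps$ by Corollary \ref{C3} and Remark \ref{C3acotado}, since $\Gamma_m\subset\Sigma_{-a,\phi}$ for suitable $a$) this yields a bound $\le \frac{\sup_{\Gamma_m}C_3^\eps(\lambda)}{\pi\cos\phi}\, e^{-(\lambda_{m+1}^0-1)t}\,\tau(\eps)\,t^{-1}$, recalling $b=-(\lambda_{m+1}^0-1)$.

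For the second bound I would use the triangle inequality $\|e^{-A_\varepsilon t}\mathbf{Q}_{\mathbf{m}}^{\bm\varepsilon}E-E e^{-A_0 t}\mathbf{Q}_{\mathbf{m}}^{\mathbf{0}}\| \le \|e^{-A_\varepsilon t}\mathbf{Q}_{\mathbf{m}}^{\bm\varepsilon}E\|+\|E e^{-A_0 t}\mathbf{Q}_{\mathbf{m}}^{\mathbf{0}}\|$, and then estimate each term using \eqref{semigrupoproyectado} and \eqref{cotaextensionproyeccion}: each term is $\le 2 e^{-\lambda_{m+1}^\eps t}(\max\{\lambda_{m+1}^\eps,\alpha/t\})^\alpha$ (resp. with $\lambda_{m+1}^0$). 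Using the elementary inequality $e^{-\lambda t}(\max\{\lambda,\alpha/t\})^\alpha\le e^{-(\lambda-\mu)t}t^{-\alpha}$ for $\lambda\ge\mu$ — or more simply bounding $(\max\{\lambda,\alpha/t\})^\alpha$ and splitting off a factor $e^{-(\lambda_{m+1}-1)t}$ while absorbing the remaining $e^{-t}\lambda^\alpha$ into a constant — one arrives, after noting $\lambda_{m+1}^\eps\ge\lambda_{m+1}^0-1$ for $\eps$ small, at a bound of the form $\le 4\, e^{-(\lambda_{m+1}^0-1)t}\,t^{-\alpha}$; I'd check the constant can be taken to be $4$ exactly as in the proof of Lemma \ref{estimacionsemigruposlineales} (where $e^{-\lambda t}(\max\{\lambda,\alpha/t\})^\alpha\le t^{-\alpha}$ was used directly, here with the harmless extra factor $e^{-(\lambda_{m+1}^0-1)t}$ pulled out and the leftover exponential $\le 1$ on the relevant range, or absorbed). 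Taking the minimum of the two bounds and setting $C_5=\max\{\frac{\sup_{\lambda\in\Gamma_m}C_3^\eps(\lambda)}{\pi\cos\phi},4\}$ gives the claim.

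The main obstacle I anticipate is purely bookkeeping rather than conceptual: making sure the exponential prefactor comes out exactly as $e^{-(\lambda_{m+1}^0-1)t}$ in \emph{both} bounds simultaneously. In the contour bound this is automatic because $\mathrm{Re}(\lambda)\le b$ on $\Gamma_m$; in the triangle-inequality bound one must be slightly careful since the natural decay rate is $\lambda_{m+1}^\eps$ (not $\lambda_{m+1}^0-1$) and the maximum $\max\{\lambda_{m+1}^\eps,\alpha/t\}$ interacts with the exponential — so one has to verify the elementary one-variable inequality $e^{-\lambda t}(\max\{\lambda,\alpha/t\})^\alpha\le e^{-(\lambda-1)t}t^{-\alpha}$ cleanly, or argue via $\lambda_{m+1}^\eps\ge\lambda_{m+1}^0-1$ plus $e^{-t}\big(\max\{\lambda_{m+1}^\eps,\alpha/t\}\big)^\alpha / t^{-\alpha}$ being bounded. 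Beyond that, one needs the uniform-in-$\eps$ control of $\sup_{\lambda\in\Gamma_m}C_3^\eps(\lambda)$, which is already available from Corollary \ref{C3} together with Remark \ref{C3acotado}.
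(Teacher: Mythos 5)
Your overall strategy---a contour estimate over $\Gamma_m$ giving $\frac{\sup_{\Gamma_m}C_3^\eps}{\pi\cos\phi}\,\tau(\eps)\,t^{-1}e^{-(\lambda_{m+1}^0-1)t}$, plus a triangle-inequality estimate for the $t^{-\alpha}$ decay, and then taking the minimum---is exactly the paper's proof, and your contour half is correct as written. The gap is in the second estimate. The elementary inequality you lean on, $e^{-\lambda t}\left(\max\{\lambda,\alpha/t\}\right)^\alpha\le e^{-(\lambda-1)t}t^{-\alpha}$, i.e. $e^{-t}\left(\max\{\lambda,\alpha/t\}\right)^\alpha\le t^{-\alpha}$, is false precisely in the regime $\lambda>\alpha/t$ with $\lambda$ large: at $t=1$ it reads $e^{-1}\lambda^\alpha\le 1$, which fails as soon as $\lambda>e^{1/\alpha}$, and here $\lambda=\lambda_{m+1}^\eps$ is large because of the spectral gap conditions. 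Your fallback of ``absorbing $e^{-t}\lambda^\alpha$ into a constant'' does not rescue the stated form either: it yields $4e^{-(\lambda_{m+1}^0-1)t}\max\{(\lambda_{m+1}^0+1)^\alpha,\,t^{-\alpha}\}$ (this is in fact what the paper records at this point), or a bound $C_m\,e^{-(\lambda_{m+1}^0-1)t}t^{-\alpha}$ with $C_m$ depending on $\lambda_{m+1}^0$, not the constant $4$ appearing in $C_5$.

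The step can be repaired by a regime split, which is also what is implicitly needed to pass from $\min\{t^{-1}\tau(\eps),\,\max\{(\lambda_{m+1}^0+1)^\alpha,t^{-\alpha}\}\}$ to $l_\eps^\alpha(t)$ in the paper itself. The $t^{-\alpha}$ branch of $l_\eps^\alpha$ matters only when $t^{-\alpha}\le t^{-1}\tau(\eps)$, i.e. $t\le\tau(\eps)^{1/(1-\alpha)}$; for $\eps$ small this forces $t\le\alpha/\lambda_{m+1}^\eps$ (in particular $t\le 1/(\lambda_{m+1}^0+1)$), so that $\max\{\lambda_{m+1}^\eps,\alpha/t\}=\alpha/t$ and $e^{-\lambda_{m+1}^\eps t}(\alpha/t)^\alpha\le e^{-(\lambda_{m+1}^0-1)t}t^{-\alpha}$ holds directly (using $\lambda_{m+1}^\eps\ge\lambda_{m+1}^0-1$ and $\alpha^\alpha\le1$); hence the triangle-inequality bound gives $4e^{-(\lambda_{m+1}^0-1)t}t^{-\alpha}=4e^{-(\lambda_{m+1}^0-1)t}l_\eps^\alpha(t)$ in that regime. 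For $t\ge\tau(\eps)^{1/(1-\alpha)}$ one has $l_\eps^\alpha(t)=t^{-1}\tau(\eps)$ and your contour estimate alone already gives the claim. With this split your argument closes and reproduces the lemma with the stated constant $C_5$.
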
  
\begin{proof}
From Lemma \ref{spectrum-behavior} and Remark \ref{autovalores}, we know that there is a real number $\varepsilon_0=\varepsilon_0(m)$ such that, for $0\leq\varepsilon\leq\varepsilon_0$, there is a gap between the  $mth$-eigenvalue, $-\lambda_m^\varepsilon$, and $m+1$-eigenvalues, $-\lambda_{m+1}^\varepsilon$, of $-A_\varepsilon$. We denote by $\Gamma_m$ the boundary of $\Sigma_{b, \phi}=\{\lambda\in\mathbb{C}: |arg(\lambda-b)|\leq\pi-\phi\}$, with $\phi=\frac{\pi}{4}$ and $b=-\lambda_{m+1}^0+1$. That is,
$$\Gamma_m=\Gamma_m^1\cup\Gamma_m^2=\{b+re^{-i(\pi-\phi)}: 0\leq r<\infty\}\cup\{b+re^{i(\pi-\phi)}: 0\leq r<+\infty\},$$
oriented such that the imaginary part grows as $\lambda$ runs in $\Gamma$. 

With this, 

$$e^{-A_\varepsilon t}\mathbf{Q}_{\mathbf{m}}^{\bm\varepsilon}E-E e^{-A_0 t}\mathbf{Q}_{\mathbf{m}}^{\mathbf{0}}= \frac{1}{2\pi i}\int_{\Gamma_m}\left((\lambda+A_\varepsilon)^{-1}E-E(\lambda+A_0)^{-1}\right)e^{\lambda t}d\lambda.$$
Then,
$$\|e^{-A_\varepsilon t}\mathbf{Q}_{\mathbf{m}}^{\bm\varepsilon}E-E e^{-A_0 t}\mathbf{Q}_{\mathbf{m}}^{\mathbf{0}}\|_{\mathcal{L}(X_0, X_\varepsilon^\alpha)}$$
$$\leq \frac{1}{2\pi}\left|\int_{\Gamma_m}\|\left((\lambda+A_\varepsilon)^{-1}E-E(\lambda+A_0)^{-1}\right)\|_{\mathcal{L}(X_0, X_\varepsilon^\alpha)}|e^{\lambda t}|d\lambda\right|,$$
applying Lemma \ref{estimacionlandas}
$$\|e^{-A_\varepsilon t}\mathbf{Q}_{\mathbf{m}}^{\bm\varepsilon}E-E e^{-A_0 t}\mathbf{Q}_{\mathbf{m}}^{\mathbf{0}}\|_{\mathcal{L}(X_0, X_\varepsilon^\alpha)}$$
$$\leq \frac{\sup_{\lambda\in\Gamma_m} C_3^\eps(\lambda)\tau(\varepsilon)}{2\pi}\left|\int_{\Gamma_m}|e^{\lambda t}| d\lambda \right|= \frac{\sup_{\lambda\in\Gamma_m} C_3^\eps(\lambda)\tau(\varepsilon)}{\pi}\left|\int_{\Gamma_m^2}|e^{\lambda t}| d\lambda \right|.$$
Since $\lambda\in\Gamma_m^2$,
$$|e^{\lambda t}|=e^{(b-rcos(\phi))t }.$$
So,
$$\|e^{-A_\varepsilon t}\mathbf{Q}_{\mathbf{m}}^{\bm\varepsilon}E-E e^{-A_0 t}\mathbf{Q}_{\mathbf{m}}^{\mathbf{0}}\|_{\mathcal{L}(X_0, X_\varepsilon^\alpha)}$$
$$\leq \frac{\sup_{\lambda\in\Gamma_m} C_3^\eps(\lambda)\tau(\varepsilon)}{\pi}\int_0^\infty e^{(b-rcos(\phi))t}|e^{-i(\pi-\phi)}|dr.$$
We make the change of variables $(-b+rcos(\phi))t=z$,
$$\|e^{-A_\varepsilon t}\mathbf{Q}_{\mathbf{m}}^{\bm\varepsilon}E-E e^{-A_0 t}\mathbf{Q}_{\mathbf{m}}^{\mathbf{0}}\|_{\mathcal{L}(X_0, X_\varepsilon^\alpha)}\leq \frac{\sup_{\lambda\in\Gamma_m} C_3^\eps(\lambda)\tau(\varepsilon)}{\pi cos(\phi) t}\int_{-bt}^\infty e^{-z}dz=$$
$$= \frac{\sup_{\lambda\in\Gamma_m} C_3^\eps(\lambda)}{\pi cos(\phi) } t^{-1}e^{-(\lambda_{m+1}^0-1)t}\tau(\varepsilon). $$

On the other side, we know that,
$$\|e^{-A_\varepsilon t}\mathbf{Q}_{\mathbf{m}}^{\bm\varepsilon}E-E e^{-A_0 t}\mathbf{Q}_{\mathbf{m}}^{\mathbf{0}}\|_{\mathcal{L}(X_0, X_\varepsilon^\alpha)}\leq $$
$$\|e^{-A_\varepsilon t}\mathbf{Q}_{\mathbf{m}}^{\bm\varepsilon}E\|_{\mathcal{L}(X_0, X^\alpha_\varepsilon)}+\|E e^{-A_0 t}\mathbf{Q}_{\mathbf{m}}^{\mathbf{0}}\|_{\mathcal{L}(X_0, X^\alpha_\varepsilon)}.$$
Then, by (\ref{semigrupoproyectado}),
$$\leq 2e^{-\lambda_{m+1}^\varepsilon t}\left(\max\{\lambda_{m+1}^\varepsilon, \frac{\alpha}{t}\}\right)^\alpha+2 e^{-\lambda_{m+1}^0 t}\left(\max\{\lambda_{m+1}^0, \frac{\alpha}{t}\}\right)^\alpha\leq$$
$$\leq 4e^{-(\lambda_{m+1}^0-1) t}\left(\max\{(\lambda_{m+1}^0+1)^\alpha,t^{-\alpha}\}\right).$$

So, if we put everything together,
$$\|e^{-A_\varepsilon t}\mathbf{Q}_{\mathbf{m}}^{\bm\varepsilon}E-E e^{-A_0 t}\mathbf{Q}_{\mathbf{m}}^{\mathbf{0}}\|_{\mathcal{L}(X_0, X^\alpha_\varepsilon)}$$
$$\leq C_5 \min\left\{t^{-1}\tau(\varepsilon), \max\{(\lambda_{m+1}^0+1)^\alpha, t^{-\alpha}\} \right\}e^{-(\lambda_{m+1}^0 -1) t}=$$
$$=C_5 \min\left\{t^{-1}\tau(\varepsilon), t^{-\alpha}\right\}e^{-(\lambda_{m+1}^0 -1) t}= C_5 l_\varepsilon^\alpha e^{-(\lambda_{m+1}^0 -1) t},$$
as we wanted to prove.
\end{proof}

We may show now the following result.

\begin{lem}\label{normacoordenadas}
Let $w_\varepsilon\in\mathbf{P}_{\mathbf{m}}^{\bm\varepsilon} X_\varepsilon$ and $w_0\in\mathbf{P}_{\mathbf{m}}^{\mathbf{0}}X_0$. Then, for $\eps$ small enough and for $0\leq\alpha<1$,
$$|j_\varepsilon(w_\varepsilon)-j_0(w_0)|_\alpha\leq 3\|w_\varepsilon-Ew_0\|_{X_\varepsilon^\alpha}+ 3C_P\tau(\varepsilon)\|w_0\|_{X_0}.$$
\end{lem}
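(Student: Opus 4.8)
The plan is to relate the $|\cdot|_\alpha$-norm of the coordinate difference $j_\varepsilon(w_\varepsilon)-j_0(w_0)\in\mathbb{R}^m$ to the $X_\varepsilon^\alpha$-norm of the honest function difference $w_\varepsilon-Ew_0$, using the fact that the basis $\{\psi_i^\varepsilon\}=\{\mathbf{P}_{\mathbf{m}}^{\bm\varepsilon}(E\varphi_i^0)\}$ of $\mathbf{P}_{\mathbf{m}}^{\bm\varepsilon}(X_\varepsilon)$ is close to the image under $E$ of the orthonormal basis $\{\varphi_i^0\}$ of $\mathbf{P}_{\mathbf{m}}^{\mathbf{0}}(X_0)$, by Lemma \ref{estimacionProyecciones}. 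First I would write $w_0=\sum_{i=1}^m q_i\varphi_i^0$ so that $j_0(w_0)=\bar q=(q_1,\dots,q_m)$, and $w_\varepsilon=\sum_{i=1}^m p_i\psi_i^\varepsilon$ so that $j_\varepsilon(w_\varepsilon)=\bar p$. The key elementary observation is that $Ew_0=\sum_i q_i E\varphi_i^0$, whereas $\sum_i q_i\psi_i^\varepsilon = \sum_i q_i\mathbf{P}_{\mathbf{m}}^{\bm\varepsilon}(E\varphi_i^0) = \mathbf{P}_{\mathbf{m}}^{\bm\varepsilon}(Ew_0)$, so by Lemma \ref{estimacionProyecciones} (applied with $v=w_0$, using $E\mathbf{P}_{\mathbf{m}}^{\mathbf{0}}w_0=Ew_0$) we get
$$\Big\| \sum_{i=1}^m q_i\psi_i^\varepsilon - Ew_0\Big\|_{X_\varepsilon^\alpha}=\|\mathbf{P}_{\mathbf{m}}^{\bm\varepsilon}(Ew_0)-E\mathbf{P}_{\mathbf{m}}^{\mathbf{0}}(w_0)\|_{X_\varepsilon^\alpha}\leq C_P\tau(\varepsilon)\|w_0\|_{X_0}.$$

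Next I would combine this with the hypothesis-free bound $\|w_\varepsilon-Ew_0\|_{X_\varepsilon^\alpha}$ via the triangle inequality to control $\big\|\sum_i(p_i-q_i)\psi_i^\varepsilon\big\|_{X_\varepsilon^\alpha}$, namely
$$\Big\|\sum_{i=1}^m(p_i-q_i)\psi_i^\varepsilon\Big\|_{X_\varepsilon^\alpha}\leq \|w_\varepsilon-Ew_0\|_{X_\varepsilon^\alpha}+C_P\tau(\varepsilon)\|w_0\|_{X_0}.$$
The remaining task is to pass from this $X_\varepsilon^\alpha$-norm of a linear combination of the $\psi_i^\varepsilon$ back to the $|\cdot|_\alpha$-norm of the coefficient vector $\bar p-\bar q$. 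For this I would show that for $\varepsilon$ small the map $\bar r\mapsto \big\|\sum_i r_i\psi_i^\varepsilon\big\|_{X_\varepsilon^\alpha}$ is comparable to $|\bar r|_\alpha$; concretely, since $\psi_i^\varepsilon$ is $X_\varepsilon^\alpha$-close to $E\varphi_i^0$ (again by Lemma \ref{estimacionProyecciones} applied to $v=\varphi_i^0$, giving $\|\psi_i^\varepsilon-E\varphi_i^0\|_{X_\varepsilon^\alpha}\le C_P\tau(\varepsilon)$), and since $\|E\varphi_i^0\|_{X_\varepsilon^\alpha}\to\|\varphi_i^0\|_{X_0^\alpha}$, one controls the discrepancy between $\big\|\sum_i r_i\psi_i^\varepsilon\big\|_{X_\varepsilon^\alpha}$ and $\big\|\sum_i r_i E\varphi_i^0\big\|_{X_\varepsilon^\alpha}$, and the latter is in turn close to $\|\sum_i r_i\varphi_i^0\|_{X_0^\alpha}=|j_0(\sum r_i\varphi_i^0)|_\alpha=|\bar r|_\alpha$ by \eqref{normajepsilon} and the convergence of $E$ on $X_0^\alpha$. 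Also one must note the eigenvalue convergence $\lambda_i^\varepsilon\to\lambda_i^0$ (Remark \ref{continuity}) so that the two versions of $|\cdot|_\alpha$ — one weighted by $(\lambda_i^\varepsilon)^{2\alpha}$, one by $(\lambda_i^0)^{2\alpha}$ — are comparable, the factor tending to $1$. Collecting these comparisons, for $\varepsilon$ small enough one gets $|\bar r|_\alpha\le (1+\delta(\varepsilon))\big\|\sum_i r_i\psi_i^\varepsilon\big\|_{X_\varepsilon^\alpha}$ with $\delta(\varepsilon)\to 0$, and in particular $\le \tfrac{3}{2}$ times, say; taking $\bar r=\bar p-\bar q$ and absorbing constants yields
$$|j_\varepsilon(w_\varepsilon)-j_0(w_0)|_\alpha\leq 3\|w_\varepsilon-Ew_0\|_{X_\varepsilon^\alpha}+3C_P\tau(\varepsilon)\|w_0\|_{X_0},$$
with the constant $3$ being a comfortable choice that accommodates the $(1+\delta(\varepsilon))$ factors for $\varepsilon_0$ small.

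The main obstacle I anticipate is the last comparison — showing that the $X_\varepsilon^\alpha$-norm of $\sum_i r_i\psi_i^\varepsilon$ is (up to a factor arbitrarily close to $1$) bounded below by $|\bar r|_\alpha$, uniformly over the finite-dimensional unit sphere. This is where one genuinely uses that $m$ is fixed (finite dimension), the spectral convergence $\lambda_i^\varepsilon\to\lambda_i^0$ with strict gaps, the norm convergence $\|E\varphi_i^0\|_{X_\varepsilon^\alpha}\to\|\varphi_i^0\|_{X_0^\alpha}$, and also approximate orthogonality of the $\{E\varphi_i^0\}$ in $X_\varepsilon^\alpha$ inherited from orthonormality of $\{\varphi_i^0\}$ — all of which are perturbations of the clean identity \eqref{normajepsilon}. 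Once the norm equivalence constant is pinned to within $1+\delta(\varepsilon)$, everything else is a routine triangle-inequality bookkeeping exercise and the stated constants $3$ and $3C_P$ follow by choosing $\varepsilon_0$ small.
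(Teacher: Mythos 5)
Your first two steps are sound and essentially the same algebra as the paper's (writing $\sum_i q_i\psi_i^\varepsilon=\mathbf{P}_{\mathbf{m}}^{\bm\varepsilon}(Ew_0)$ and invoking Lemma \ref{estimacionProyecciones} to pay only $C_P\tau(\varepsilon)\|w_0\|_{X_0}$), but the step you yourself flag as the main obstacle is a genuine gap as written. To pass from $\bigl\|\sum_i r_i\psi_i^\varepsilon\bigr\|_{X_\varepsilon^\alpha}$ back to $|\bar r|_\alpha$ with a factor $1+\delta(\varepsilon)$ you invoke ``$\|E\varphi_i^0\|_{X_\varepsilon^\alpha}\to\|\varphi_i^0\|_{X_0^\alpha}$'' and ``the convergence of $E$ on $X_0^\alpha$''. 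Neither is a hypothesis of the paper: \eqref{propiedadesextensionproyeccion} gives norm convergence of $E$ only in the $X_\varepsilon$-norm, while in $X_\varepsilon^\alpha$ one only has the uniform bound $\|E\|_{\mathcal{L}(X_0^\alpha,X_\varepsilon^\alpha)}\leq 2$ from \eqref{cotaextensionproyeccion}. A near-isometry of $E$ on $\mathbf{P}_{\mathbf{m}}^{\mathbf{0}}X_0^\alpha$ in the $\alpha$-norm can in fact be derived, but it needs a separate argument (cluster-wise spectral projections as in Remark \ref{continuity}, eigenvalue convergence, and polarization of the $X$-norm convergence to control the Gram matrix of $\{\psi_i^\varepsilon\}$); since that is precisely the quantitative heart of the lemma, leaving it at the level of ``one controls the discrepancy'' means the proof is not complete.

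The paper avoids this issue entirely, and its trick also repairs your plan with no extra work: instead of comparing $\alpha$-norms across spaces, apply $M$ and use $M\circ E=I$. Writing $\sum_i r_i\varphi_i^0=M\bigl(\sum_i r_i\psi_i^\varepsilon\bigr)-M\bigl((\mathbf{P}_{\mathbf{m}}^{\bm\varepsilon}E-E\mathbf{P}_{\mathbf{m}}^{\mathbf{0}})\sum_i r_i\varphi_i^0\bigr)$ and taking the $X_0^\alpha$-norm, the identity \eqref{normajepsilon} gives $|\bar r|_\alpha$ exactly on the left, while $\|M\|\leq 2$ and Lemma \ref{estimacionProyecciones} give $|\bar r|_\alpha\leq 2\bigl\|\sum_i r_i\psi_i^\varepsilon\bigr\|_{X_\varepsilon^\alpha}+2C_P\tau(\varepsilon)|\bar r|$; absorbing the last term (using $|\bar r|\leq|\bar r|_\alpha$ and $\varepsilon$ small so that $2/(1-2C_P\tau(\varepsilon))\leq 3$) yields the stated constants $3$ and $3C_P$. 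This costs a harmless factor close to $2$ rather than $1+\delta(\varepsilon)$, but it uses only the stated hypotheses; if you substitute it for your unproven norm-equivalence step, the rest of your bookkeeping goes through and you recover essentially the paper's proof.
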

  \begin{proof}
Since $\varphi_i^0=\mathbf{P_m^0}(\varphi_i^0)$, then if we denote by $j_\eps(w_\eps)=\bar p_\eps$ and $j_0(w_0)=\bar p_0$ 
$$w_\varepsilon-Ew_0=\sum_{I=1}^mp_i^\varepsilon \mathbf{P}_{\mathbf{m}}^{\bm\varepsilon}(E\varphi_i^0)-E\sum_{i=1}^mp_i^0\mathbf{P}_{\mathbf{m}}^{\mathbf{0}}\varphi_i^0$$
$$=(\mathbf{P}_{\mathbf{m}}^{\bm\varepsilon}E- E \mathbf{P_m^0})\left(\sum_{I=1}^mp_i^\varepsilon \varphi_i^0 \right)+E\sum_{i=1}^m(p_i^\eps-p_i^0)\varphi_i^0$$
Applying the operator $M$ and using that $M\circ E=I$, we get
$$\sum_{i=1}^m(p_i^\eps-p_i^0)\varphi_i^0=M(w_\varepsilon-Ew_0)- M(\mathbf{P}_{\mathbf{m}}^{\bm\varepsilon}E- E \mathbf{P_m^0})\left(\sum_{I=1}^mp_i^\varepsilon \varphi_i^0 \right)$$
Taking the $X_0^\alpha$ norm in the last expression and with \eqref{cotaextensionproyeccion},  Lemma \ref{estimacionProyecciones} and \eqref{normajepsilon}, we get
$$|\bar p_\eps-\bar p_0|_\alpha\leq 2\|w_\varepsilon-Ew_0\|_{X_\eps^\alpha}+2C_p\tau(\eps)|\bar p_\eps|$$
$$\leq 
2\|w_\varepsilon-Ew_0\|_{X_\eps^\alpha}+2C_p\tau(\eps)|\bar p_\eps-\bar p_0|+2C_p\tau(\eps)|\bar p_0|.$$
From here, we get 
$$|\bar p_\eps-\bar p_0|_\alpha\leq \frac{2}{1-2C_P\tau(\eps)}\|w_\varepsilon-Ew_0\|_{X_\eps^\alpha}+\frac{2}{1-2C_P\tau(\eps)}C_p\tau(\eps)|\bar p_0|.$$
Taking $\eps$ small enough so that $\frac{2}{1-2C_P\tau(\eps)}\leq 3$ and since $|\bar p_0|=\|w_0\|_{X_0}$, we prove the result. \end{proof}
\par\bigskip

Next, we introduce some technical results.
\begin{lem}\label{normap}
For every $\Phi_\varepsilon\in\mathcal{F}_{L}$  with $L\leq 1$ and any $\bar{p}^0\in\mathbb{R}^m$,  
if $p_\eps(t)$ is the solution of \eqref{equationp-modified}, we have,
$$\|p_\eps(t)\|_{X^\alpha_\eps}\leq \left(\|j_\eps^{-1}(\bar p^0)\|_{X_\eps^\alpha}+\frac{C_F}{(\lambda_m^\varepsilon)^{1-\alpha}}\right)e^{-\lambda_m^\varepsilon t},\qquad t\leq 0,$$
\end{lem}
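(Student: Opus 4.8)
The plan is to apply the variation of constants formula to the finite-dimensional equation \eqref{equationp-modified} and to estimate the two resulting terms backward in time, using only that $p_\eps(t)$ lies in the $m$-dimensional subspace $\mathbf{P_m^\eps}X_\eps$ (on which $A_\eps$ has the eigenvalues $\lambda_1^\eps\le\cdots\le\lambda_m^\eps$, so that $e^{-A_\eps t}\mathbf{P_m^\eps}$ is defined for all $t\in\R$) together with the uniform bound $\|F_\eps\|_{L^\infty(X_\eps^\alpha,X_\eps)}\le C_F$ from {\bf (H2)}(a). Note that $\lambda_m^\eps>0$; in fact $(\lambda_m^\eps)^{1-\alpha}\ge 12L_F(1-\alpha)^{-1}$ by \eqref{gaps-eps}, so every quantity below is finite.

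First I would record the backward-in-time semigroup estimates on the range of $\mathbf{P_m^\eps}$. Since $e^{-A_\eps t}\mathbf{P_m^\eps}u=\sum_{i=1}^m e^{-\lambda_i^\eps t}(u,\varphi_i^\eps)\varphi_i^\eps$, and for $t\le 0$ one has $e^{-2\lambda_i^\eps t}\le e^{-2\lambda_m^\eps t}$ and $(\lambda_i^\eps)^{2\alpha}\le(\lambda_m^\eps)^{2\alpha}$ for every $i\le m$, a computation identical in spirit to Lemma \ref{estimacionsemigrupolineal} gives, for $t\le 0$,
$$\|e^{-A_\eps t}\mathbf{P_m^\eps}\|_{\mathcal{L}(X_\eps,X_\eps^\alpha)}\le(\lambda_m^\eps)^\alpha e^{-\lambda_m^\eps t},\qquad \|e^{-A_\eps t}\mathbf{P_m^\eps}\|_{\mathcal{L}(X_\eps^\alpha,X_\eps^\alpha)}\le e^{-\lambda_m^\eps t}.$$

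Next, writing \eqref{equationp-modified} in variation of constants form, for $t\le 0$,
$$p_\eps(t)=e^{-A_\eps t}j_\eps^{-1}(\bar p^0)+\int_0^t e^{-A_\eps(t-s)}\mathbf{P_m^\eps}F_\eps\big(p_\eps(s)+\Phi_\eps(j_\eps(p_\eps(s)))\big)\,ds,$$
I would take the $X_\eps^\alpha$ norm. The first term is bounded by $e^{-\lambda_m^\eps t}\|j_\eps^{-1}(\bar p^0)\|_{X_\eps^\alpha}$ using the second estimate above, since $j_\eps^{-1}(\bar p^0)\in\mathbf{P_m^\eps}X_\eps^\alpha$. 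For the integral term, since $t\le s\le 0$ forces $t-s\le 0$, the first estimate above together with $\|\mathbf{P_m^\eps}F_\eps(\cdot)\|_{X_\eps}\le C_F$ yields a bound by
$$(\lambda_m^\eps)^\alpha C_F\int_t^0 e^{-\lambda_m^\eps(t-s)}\,ds=(\lambda_m^\eps)^\alpha C_F\,e^{-\lambda_m^\eps t}\,\frac{1-e^{\lambda_m^\eps t}}{\lambda_m^\eps}\le\frac{C_F}{(\lambda_m^\eps)^{1-\alpha}}e^{-\lambda_m^\eps t},$$
where we used $0\le 1-e^{\lambda_m^\eps t}\le 1$ for $t\le 0$. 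Adding the two contributions gives precisely the asserted inequality.

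There is no genuine obstacle here: this is a routine variation-of-constants estimate, and because $F_\eps$ is globally bounded no Gronwall-type argument is needed. The only points requiring a little care are the orientation of the integral in the backward-time regime and the a priori existence of a globally defined backward solution $p_\eps$ of \eqref{equationp-modified}; the latter is part of the Lyapunov--Perron construction of \cite{Sell&You}, the hypotheses $\Phi_\eps\in\mathcal{F}_L$ with $L\le 1$ (Lipschitz, bounded support) ensuring that \eqref{equationp-modified} is globally well posed, and the estimate just proved is exactly what controls the backward growth of $p_\eps$.
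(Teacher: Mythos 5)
Your proposal is correct and follows essentially the same route as the paper: the variation of constants formula for \eqref{equationp-modified}, the backward-in-time bounds $e^{-\lambda_m^\eps t}$ and $(\lambda_m^\eps)^\alpha e^{-\lambda_m^\eps (t-s)}$ for the semigroup on $\mathbf{P_m^{\bm\eps}}X_\eps^\alpha$, and the uniform bound $C_F$ from {\bf (H2)}(a), yielding the factor $C_F(\lambda_m^\eps)^{\alpha-1}$. The extra remarks on backward well-posedness and the explicit derivation of the projected semigroup estimates are fine but not needed beyond what the paper already uses.
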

  \begin{proof}
By the variation of constant formula for $t\leq0$,
$$\|p_\eps(t)\|_{X_\eps^\alpha}\leq \|e^{-A_\eps t}j_\eps^{-1}(\bar{p}^0)\|_{X_\eps^\alpha}+\int_t^0\|e^{-A_\varepsilon  (t-s)}\mathbf{P_m^{\bm \eps}}F_\varepsilon({p}_\varepsilon(s)+\Phi_\varepsilon j_\eps ({p}_\varepsilon(s)))\|_{X_\eps^\alpha}ds$$
$$\leq e^{-\lambda_m^\eps t}\|j_\eps^{-1}(\bar p^0)\|_{X_\eps^\alpha} +\int_t^0 e^{-\lambda_m^\eps(t-s)}(\lambda_m^\eps)^\alpha\|\mathbf{P_m^{\bm \eps}}F_\varepsilon({p}_\varepsilon(s)+\Phi_\varepsilon j_\eps ({p}_\varepsilon(s)))\|_{X_\eps}ds$$
$$\leq e^{-\lambda_m^\eps t}\|j_\eps^{-1}(\bar p^0)\|_{X_\eps^\alpha} +\int_t^0 e^{-\lambda_m^\eps(t-s)}(\lambda_m^\eps)^\alpha C_Fds$$
$$\leq \left(\|j_\eps^{-1}(\bar p^0)\|_{X_\eps^\alpha}+\frac{C_F}{(\lambda_m^\varepsilon)^{1-\alpha}}\right)e^{-\lambda_m^\varepsilon t}$$
\end{proof}

Let $\Phi_0$ and $\Phi_\varepsilon$ be the inertial manifolds constructed above.  If $\bar{p}^0\in\mathbb{R}^m$, we denote by $p_0(t)\in \mathbf{P_m^0}X_0^\alpha$ and $p_\eps(t)\in \mathbf{P_m^\eps}(X_\eps^\alpha)$ the solutions of the initial value problems, respectively,
\begin{equation}\label{problemainicialp-0}
{p_0}_t=-A_0{p_0}+\mathbf{P_m^0}F_0(p_0+\Phi_0(j_0(p_0))),\qquad {p_0}(0)=j_0^{-1}\bar{p}^0,
\end{equation}
and
\begin{equation}\label{problemainicialp}
{p_\eps}_t=-A_\eps{p_\eps}+\mathbf{P_m^{\bm \eps}}F_\eps(p_\eps+\Phi_\eps(j_\eps(p_\eps))),\qquad {p_\eps}(0)=j_\eps^{-1}\bar{p}^0\end{equation}

We have now, 
\begin{lem}\label{distanciapes}
With the notations above, we have, for $t\leq 0$,
$$\|p_\eps(t)-Ep_0(t)\|_{X_\eps^\alpha}\leq  \left(\frac{1}{12}\sup_{\bar p\in \mathbb{R}^m}\|\Phi_\eps(\bar p)-E \Phi_0(\bar p)\|_{X_\eps^\alpha}+\right.$$
$$+\rho(\eps)+K_2(|t|+e^{-2 t})\tau(\eps)\Big)e^{-(\lambda_m^\eps+4L_F(\lambda_m^\eps)^\alpha) t}$$
with $K_2=(6(\lambda_m^0)^\alpha L_FC_P+C_4)(|\bar p^0|+C_F)$ and $C_4$ is the constant from Lemma  \ref{estimacionsemigruposlinealesproyectadosP}.

\end{lem}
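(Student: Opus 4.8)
The strategy is a Gronwall-type argument applied to the difference $p_\eps(t) - Ep_0(t)$ for $t \leq 0$, using the variation of constants formula for both $p_\eps$ and $p_0$ on the finite-dimensional subspaces $\mathbf{P_m^{\bm\eps}}X_\eps^\alpha$ and $\mathbf{P_m^0}X_0^\alpha$. First I would write, for $t\le 0$,
\begin{equation*}
p_\eps(t) = e^{-A_\eps t} j_\eps^{-1}(\bar p^0) + \int_t^0 e^{-A_\eps(t-s)}\mathbf{P_m^{\bm\eps}} F_\eps\big(p_\eps(s)+\Phi_\eps(j_\eps(p_\eps(s)))\big)\,ds,
\end{equation*}
and the analogous formula for $Ep_0(t)$ obtained by applying $E$ to the variation of constants formula for $p_0$. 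Subtracting and inserting intermediate terms, the difference splits naturally into: (a) a linear semigroup difference on the initial data, $\big(e^{-A_\eps t}\mathbf{P_m^{\bm\eps}}E - E e^{-A_0 t}\mathbf{P_m^0}\big)j_0^{-1}(\bar p^0)$, controlled by Lemma \ref{estimacionsemigruposlinealesproyectadosP}; (b) a term where the semigroups differ but act on the same nonlinearity $F_0(p_0+\Phi_0(j_0(p_0)))$, again controlled by Lemma \ref{estimacionsemigruposlinealesproyectadosP}; (c) a term $\int_t^0 Ee^{-A_0(t-s)}\mathbf{P_m^0}\big(F_\eps(\cdot)-\cdot\big)$ type difference where the arguments of $F_\eps$ differ, which I expand further using hypothesis {\bf(H2)}(d) to pull out $\rho(\eps)$ and using the Lipschitz bound {\bf(H2)}(b) on $F_\eps$ together with Lemma \ref{normacoordenadas} to relate $\|p_\eps-Ep_0\|_{X_\eps^\alpha}$, $\|\Phi_\eps - E\Phi_0\|$, and the coordinate differences.

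The key quantitative inputs are: the linear semigroup estimate $\|e^{-A_\eps t}\mathbf{P_m^{\bm\eps}}E - Ee^{-A_0 t}\mathbf{P_m^0}\|_{\mathcal{L}(X_0,X_\eps^\alpha)} \le C_4 e^{-(\lambda_m^0+1)t}\tau(\eps)$ for $t\le 0$, the bound $\|e^{-A_\eps t}\mathbf{P_m^{\bm\eps}}\|_{\mathcal{L}(X_\eps,X_\eps^\alpha)} \le e^{-\lambda_m^\eps t}(\lambda_m^\eps)^\alpha$ for $t\le 0$, the a priori bound on $\|p_\eps(t)\|_{X_\eps^\alpha}$ and $\|p_0(t)\|_{X_0^\alpha}$ from Lemma \ref{normap} (which produces the $(|\bar p^0|+C_F)$ factors appearing in $K_2$), the Lipschitz constant $L_F$ from {\bf(H2)}(b), the projection estimate constant $C_P$ from Lemma \ref{estimacionProyecciones}, and $\rho(\eps)$ from {\bf(H2)}(d). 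After collecting terms, I expect to arrive at an integral inequality of the shape
\begin{equation*}
g(t) \le \Big(\tfrac{1}{12}\sup_{\bar p}\|\Phi_\eps(\bar p)-E\Phi_0(\bar p)\|_{X_\eps^\alpha} + \rho(\eps) + K_2(|t|+e^{-2t})\tau(\eps)\Big)e^{-\lambda_m^\eps t} + 2L_F(\lambda_m^\eps)^\alpha\int_t^0 e^{-\lambda_m^\eps(t-s)} g(s)\,ds,
\end{equation*}
where $g(t) = \|p_\eps(t)-Ep_0(t)\|_{X_\eps^\alpha}$ (the factor $2$ accounting for both the direct argument change in $F_\eps$ and the contribution through $\Phi_\eps \circ j_\eps$ via Lemma \ref{normacoordenadas}, with the Lipschitz constant $L\le 1$ of $\Phi_\eps$). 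Multiplying through by $e^{\lambda_m^\eps t}$ and applying the Gronwall lemma to $h(t)=e^{\lambda_m^\eps t}g(t)$ on $(-\infty,0]$ then yields the exponential rate $e^{-(\lambda_m^\eps+4L_F(\lambda_m^\eps)^\alpha)t}$: the Gronwall factor contributes an extra $e^{-4L_F(\lambda_m^\eps)^\alpha t}$ (the $4 = 2\times 2$ coming from the coefficient $2L_F(\lambda_m^\eps)^\alpha$ times the constant $2$ absorbed in handling the coordinate-change term), and the prefactor is exactly the one claimed, with the constant $\tfrac{1}{12}$ arising because the self-referential term $\sup\|\Phi_\eps-E\Phi_0\|$ enters multiplied by a factor that the gap conditions \eqref{gaps-eps} force to be at most $\tfrac{1}{12}$.

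The main obstacle is the careful bookkeeping in step (c): the argument of $F_\eps$ in the perturbed equation is $p_\eps(s) + \Phi_\eps(j_\eps(p_\eps(s)))$, while after applying $E$ to the limit equation one gets $E p_0(s) + E\Phi_0(j_0(p_0(s)))$, so one must compare $\Phi_\eps(j_\eps(p_\eps(s)))$ with $E\Phi_0(j_0(p_0(s)))$ by inserting $E\Phi_0(j_\eps(p_\eps(s)))$, using that $\Phi_\eps - E\Phi_0$ is what we are bounding (its sup norm) and that $E\Phi_0$ is Lipschitz with constant controlled by $L\le 1$ composed with the coordinate comparison of Lemma \ref{normacoordenadas}, which itself reintroduces a $\tau(\eps)\|p_0\|_{X_0}$ term. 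Keeping track of which terms feed into the Gronwall inequality as coefficients of $g$ (hence affecting the exponential rate) versus which are inhomogeneous forcing terms (hence affecting only the prefactor), and ensuring the gap conditions are invoked precisely where needed to make the self-referential coefficient land at $\tfrac{1}{12}$ and the exponent at $4L_F(\lambda_m^\eps)^\alpha$, is the delicate part; the remaining integrals (e.g. $\int_t^0 e^{-(\lambda_m^\eps+1)(t-s)}e^{-\lambda_m^\eps s}\,ds$ producing the $|t|$ factor, and $\int_t^0 e^{\cdots} e^{-2s}\,ds$ producing $e^{-2t}$) are routine.
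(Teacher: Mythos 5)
Your proposal follows essentially the same route as the paper: the variation of constants formula on the $\mathbf{P_m}$-subspaces, a three-term splitting (semigroup difference on the initial data, nonlinearity difference under the $\eps$-semigroup, semigroup difference acting on $\tilde F_0$) estimated with Lemma \ref{estimacionsemigruposlinealesproyectadosP}, the decomposition \eqref{F-decomposition} combined with {\bf(H2)}, Lemma \ref{normacoordenadas} and Lemma \ref{normap}, and then Gronwall after multiplying by $e^{\lambda_m^\eps t}$, with the gap conditions \eqref{gaps-eps} producing the $\tfrac{1}{12}$. The only minor discrepancies are bookkeeping: in the paper the Gronwall coefficient $4L_F(\lambda_m^\eps)^\alpha$ arises as $L_F(1+3L)\leq 4L_F$ from the constant $3$ in Lemma \ref{normacoordenadas} (not a $2\times 2$ factor), and the nonlinearity-difference term is kept under $e^{-A_\eps(t-s)}\mathbf{P_m^{\bm\eps}}$ rather than $Ee^{-A_0(t-s)}\mathbf{P_m^0}$ (which would not even act on $\tilde F_\eps-E\tilde F_0\in X_\eps$), but these do not change the argument.
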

  \begin{proof}
To simplify the notation below, we denote by $\tilde F_\eps=F_\eps(p_\eps(s)+\Phi_\eps(j_\eps(p_\eps(s))))$ and
similarly, $\tilde F_0=F_0(p_0(s)+\Phi_0(j_0(p_0(s))))$. 
By the variation of constants formula applied to (\ref{problemainicialp-0}) and (\ref{problemainicialp}) we get

$$p_\eps(t)-Ep_0(t)=e^{-A_\eps t}j_\eps^{-1}(\bar p^0)-Ee^{-A_0 t}j_0^{-1}(\bar p^0)$$
$$+ \int_0^t
\left(
e^{-A_\eps (t-s)}\mathbf{P_m^{\bm \eps}}\tilde F_\eps-
Ee^{-A_0 (t-s)}\mathbf{P_m^0}\tilde F_0\right)ds$$

$$=e^{-A_\eps t}j_\eps^{-1}(\bar p^0)-Ee^{-A_0 t}j_0^{-1}(\bar p^0)+
\int_0^t
e^{-A_\eps (t-s)}\mathbf{P_m^{\bm \eps}}(\tilde F_\eps-E\tilde F_0)ds$$
$$+\int_0^t (e^{-A_\eps (t-s)}\mathbf{P_m^{\bm \eps}} E- Ee^{-A_0 (t-s)}\mathbf{P_m^0})\tilde F_0 ds=I_1+I_2+I_3$$

Observe that, with the definition of $j_\eps$ and with the aid of Lemma \ref{estimacionsemigruposlinealesproyectadosP}, we get 
$$\|I_1\|_{X_\eps^\alpha}=\|(e^{-A_\eps t}\mathbf{P_m^\eps}E-Ee^{-A_0 t}\mathbf{P_m^0})(\sum_{i=1}^m p_i^0\varphi_i^0)\|_{X_\eps^\alpha}\leq C_4e^{-(\lambda_m^0+1)t}\tau(\eps)|\bar p^0|$$

Moreover, we have
\begin{equation}\label{F-decomposition}
\begin{array}{l}
\tilde F_\eps-E \tilde F_0=F_\eps(p_\eps +\Phi_\eps(j_\eps(p_\eps)))-F_\eps(Ep_0 +\Phi_\eps(j_\eps(p_\eps)))\\ \\
\qquad\qquad+F_\eps(Ep_0+\Phi_\eps(j_\eps(p_\eps)))- F_\eps(Ep_0+\Phi_\eps(j_0(p_0)))\\ \\
\qquad\qquad+F_\eps(Ep_0+\Phi_\eps(j_0(p_0))- F_\eps(Ep_0+E\Phi_0(j_0(p_0)))\\ \\
\qquad\qquad+F_\eps(Ep_0+E\Phi_0(j_0(p_0))- EF_0(p_0+\Phi_0(j_0(p_0)))
\end{array}
\end{equation}
which implies
$$\|\tilde F_\eps-E \tilde F_0\|_{X_\eps}\leq L_F\|p_\eps-Ep_0\|_{X_\eps^\alpha}+L_F\cdot L|j_\eps(p_\eps)-j_0(p_0))|_{\alpha}$$
$$+L_F\sup_{\bar p\in \mathbb{R}^m}\|\Phi_\eps(\bar p)-E \Phi_0(\bar p)\|_{X_\eps^\alpha}+\rho(\eps)
$$

Taking into account Lemma \ref{normacoordenadas} , we get
$$\|\tilde F_\eps-E \tilde F_0\|_{X_\eps}\leq 4L_F\|p_\eps-Ep_0\|_{X_\eps^\alpha}+3L_FC_P\tau(\eps)\|p_0\|_{X_0}$$
$$+L_F\sup_{\bar p\in \mathbb{R}^m}\|\Phi_\eps(\bar p)-E \Phi_0(\bar p)\|_{X_\eps^\alpha}+\rho(\eps) $$
which implies with Lemma \ref{normap} and using that $\lambda_m^\eps\geq 1$, 
\begin{equation}\label{estimate}
\begin{array}{l}
\|\tilde F_\eps-E \tilde F_0\|_{X_\eps}\leq  4L_F\|p_\eps-Ep_0\|_{X_\eps^\alpha}+3L_FC_P\tau(\eps)(|\bar p^0|+C_F)e^{-\lambda_m^\eps s}+\\ \\\qquad\qquad +\displaystyle L_F\sup_{\bar p\in \mathbb{R}^m}\|\Phi_\eps(\bar p)-E \Phi_0(\bar p)\|_{X_\eps^\alpha}+\rho(\eps)
\end{array}
\end{equation}
In particular, we obtain:

$$\|I_2\|_{X_\eps^\alpha}\leq (\lambda_m^\eps)^\alpha \int_t^0e^{-\lambda_m^\eps(t-s)} \|\tilde F_\eps-E \tilde F_0\|_{X_\eps}ds$$
That is,

$$
\begin{array}{l}
\displaystyle \|I_2\|_{X_\eps^\alpha}\leq 4L_F(\lambda_m^\eps)^\alpha \int_t^0e^{-\lambda_m^\eps(t-s)} \|p_\eps(s)-Ep_0(s)\|_{X_\eps^\alpha}ds\\
\displaystyle\qquad\qquad+(\lambda_m^\eps)^\alpha3L_FC_P(|\bar p^0|+C_F) |t|\tau(\eps)e^{-\lambda_m^\eps t}+\\
\displaystyle\qquad\qquad+(\lambda_m^\eps)^\alpha \left(L_F \sup_{\bar p\in \mathbb{R}^m}\|\Phi_\eps(\bar p)-E \Phi_0(\bar p)\|_{X_\eps^\alpha}+\rho(\eps)\right)\frac{e^{-\lambda_m^\eps t}-1}{\lambda_m^\eps} \\
\qquad\qquad \leq \displaystyle \left(\frac{L_F}{(\lambda_m^\eps)^{1-\alpha}} \sup_{\bar p\in \mathbb{R}^m}\|\Phi_\eps(\bar p)-E \Phi_0(\bar p)\|_{X_\eps^\alpha}+\rho(\eps)+K_1|t|\tau(\eps)\right) e^{-\lambda_m^\eps t}+\\ 
\qquad\qquad \displaystyle+4L_F(\lambda_m^\eps)^\alpha \int_t^0e^{-\lambda_m^\eps(t-s)} \|p_\eps(s)-Ep_0(s)\|_{X_\eps^\alpha}ds \\
\qquad\qquad \leq \displaystyle \left(\frac{1}{12} \sup_{\bar p\in \mathbb{R}^m}\|\Phi_\eps(\bar p)-E \Phi_0(\bar p)\|_{X_\eps^\alpha}+\rho(\eps)+K_1|t|\tau(\eps)\right) e^{-\lambda_m^\eps t}+\\ 
\qquad\qquad \displaystyle+4L_F(\lambda_m^\eps)^\alpha \int_t^0e^{-\lambda_m^\eps(t-s)} \|p_\eps(s)-Ep_0(s)\|_{X_\eps^\alpha}ds

\end{array}
$$
where we have denoted by $K_1=6(\lambda_m^0)^\alpha L_FC_P(|\bar p^0|+C_F)$ and we have used that $\lambda_m^\eps>1$
and $(\lambda_m^\eps)^\alpha\leq 2 (\lambda_m^0)^\alpha$

Finally, 
$$\|I_3\|_{X_\eps^\alpha}\leq C_4\tau(\eps) C_F\int_t^0 e^{-(\lambda_m^0+1)(t-s)}ds\leq C_4\tau(\eps) C_Fe^{-(\lambda_m^0+1)t}$$

Putting the three expressions together, we get
$$
\begin{array}{l}
\displaystyle 
\|p_\eps(t)-Ep_0(t)\|_{X_\eps^\alpha}\leq C_4(|\bar p^0|+C_F)e^{-(\lambda_m^0+1)t}\tau(\eps)+ \\
\displaystyle \left(\frac{1}{12} \sup_{\bar p\in \mathbb{R}^m}\|\Phi_\eps(\bar p)-E \Phi_0(\bar p)\|_{X_\eps^\alpha}+\rho(\eps)+K_1|t|\tau(\eps)\right) e^{-\lambda_m^\eps t}\\ 
\displaystyle+4L_F(\lambda_m^\eps)^\alpha \int_t^0e^{-\lambda_m^\eps(t-s)} \|p_\eps(s)-Ep_0(s)\|_{X_\eps^\alpha}ds.
\end{array}
$$
Multiplying this inequality by $e^{\lambda_m^\eps t}$, denoting by $h(t)=e^{\lambda_m^\eps t}\|p_\eps(t)-Ep_0(t)\|_{X_\eps^\alpha}$ and assuming $\eps$ is small enough so that $|\lambda_m^\eps-\lambda_m^0|<1$, we may write 
$$
\begin{array}{l}
\displaystyle 
h(t)\leq 
\displaystyle \left(\frac{1}{12} \sup_{\bar p\in \mathbb{R}^m}\|\Phi_\eps(\bar p)-E \Phi_0(\bar p)\|_{X_\eps^\alpha}+\rho(\eps)+K_2(|t|+e^{-2 t})\tau(\eps)\right) \\
\displaystyle \qquad\qquad+4L_F(\lambda_m^\eps)^\alpha \int_t^0h(s) ds
\end{array}
$$
where $K_2=(6(\lambda_m^0)^\alpha L_FC_P+C_4)(|\bar p^0|+C_F)$.   Applying Gronwall inequality, we get, 
$$
h(t)\leq  \left(\frac{1}{12}\sup_{\bar p\in \mathbb{R}^m}\|\Phi_\eps(\bar p)-E \Phi_0(\bar p)\|_{X_\eps^\alpha}+\rho(\eps)+K_2(|t|+e^{-2 t})\tau(\eps)\right)e^{-4L_F(\lambda_m^\eps)^\alpha t}$$
which implies that
$$
\|p_\eps(t)-Ep_0(t)\|_{X_\eps^\alpha}\leq  \Big(\frac{1}{12}\sup_{\bar p\in \mathbb{R}^m}\|\Phi_\eps(\bar p)-E \Phi_0(\bar p)\|_{X_\eps^\alpha}+\rho(\eps)$$
$$\qquad\qquad+K_2(|t|+e^{-2t})\tau(\eps)\Big)e^{-(\lambda_m^\eps+4L_F(\lambda_m^\eps)^\alpha) t}$$
which shows the result.\end{proof}

\par\bigskip\bigskip

With these results, we have all the needed tools to estimate the rate of convergence of the inertial manifolds,  proving the main result of the article 

\par\bigskip\bigskip

  \begin{proof}
Notice that we have 
 \begin{equation}\label{phi-eps}
 \Phi_0(\bar p^0)=\int_{-\infty}^0e^{A_0 s}\mathbf{Q}^0_m F_0\big({p}_0(s)+\Phi_0(j_0({p}_0(s)))\big)ds,
 \end{equation}
and
 \begin{equation}\label{phi-eps}
 \Phi_\varepsilon(\bar p^0)=\int_{-\infty}^0e^{A_\varepsilon  s}\mathbf{Q}^\eps_m F_\eps\big({p}_\eps(s)+\Phi_\varepsilon(j_\eps({p_\eps}(s)))\big)ds,
 \end{equation}
where $p_0(s)$ and $p_\eps(s)$ are the solutions of \eqref{problemainicialp-0} and \eqref{problemainicialp}.  Denoting, as in the proof of the previous Lemma, 
$\tilde F_\eps=F_\eps(p_\eps(s)+\Phi_\eps(j_\eps(p_\eps(s))))$ and
 $\tilde F_0=F_0(p_0(s)+\Phi_0(j_0(p_0(s))))$ 
$$\Phi_\eps(\bar p^0)-E\Phi_0(\bar p^0)=\int_{-\infty}^0\left(e^{A_\varepsilon  s}\mathbf{Q}^\eps_m \tilde F_\eps -Ee^{A_0 s}\mathbf{Q}^0_m \tilde F_0\right)ds=$$ 
$$=\int_{-\infty}^0 e^{A_\varepsilon  s}\mathbf{Q}^\eps_m ( \tilde F_\eps -E \tilde F_0)ds+ \int_{-\infty}^0 \left(e^{A_\varepsilon  s}\mathbf{Q}^\eps_m E- Ee^{A_0 s}\mathbf{Q}^0_m\right)\tilde F_0ds=I_1+I_2.$$
With \eqref{semigrupoproyectado} 
$$\|I_1\|_{X_\eps^\alpha}\leq \int_{-\infty}^0 e^{\lambda_{m+1}^\eps s}\left(\max\{\lambda_{m+1}^\varepsilon, \frac{\alpha}{t}\}\right)^\alpha\|\tilde F_\eps -E \tilde F_0\|_{X_\eps}ds.$$
Now, with the decomposition as in \eqref{F-decomposition} and with \eqref{estimate} and denoting by $\|E\Phi_0-\Phi_\varepsilon\|_\infty=\|E\Phi_0-\Phi_\varepsilon\|_{L^\infty(\mathbb{R}^m, X^\alpha_\varepsilon)}$, we obtain
$$
\|I_1\|_{X_\eps^\alpha}\leq\int_{-\infty}^0 e^{\lambda_{m+1}^\eps s}\left(\max\{\lambda_{m+1}^\varepsilon, \frac{\alpha}{s}\}\right)^\alpha \Big[ 4L_F\|p_\eps(s)-Ep_0(s)\|_{X_\eps^\alpha}$$
$$+3L_FC_P\tau(\eps)(|\bar p^0|+C_F)e^{-\lambda_m^\eps s}+L_F\|E\Phi_0-\Phi_\varepsilon\|_\infty+\rho(\eps)\Big]ds
$$

$$=4L_F\int_{-\infty}^0 e^{\lambda_{m+1}^\eps s}\left(\max\{\lambda_{m+1}^\varepsilon, \frac{\alpha}{s}\}\right)^\alpha \|p_\eps(s)-Ep_0(s)\|_{X_\eps^\alpha}ds+$$
$$\qquad + 3L_FC_P\tau(\eps)(|\bar p^0|+C_F)\int_{-\infty}^0 e^{(\lambda_{m+1}^\eps-\lambda_m^\eps) s}\left(\max\{\lambda_{m+1}^\varepsilon, \frac{\alpha}{s}\}\right)^\alpha ds+$$
$$+\rho(\eps)\int_{-\infty}^0 e^{\lambda_{m+1}^\eps s}\left(\max\{\lambda_{m+1}^\varepsilon, \frac{\alpha}{s}\}\right)^\alpha ds$$
$$ +L_F\|E\Phi_0-\Phi_\varepsilon\|_\infty \int_{-\infty}^0 e^{\lambda_{m+1}^\eps s}\left(\max\{\lambda_{m+1}^\varepsilon, \frac{\alpha}{s}\}\right)^\alpha ds.$$
The second term in the last expression can be estimated with Lemma \ref{integralmaximo}, since
$$\int_{-\infty}^0 e^{(\lambda_{m+1}^\eps-\lambda_m^\eps) s}\left(\max\{\lambda_{m+1}^\varepsilon, \frac{\alpha}{s}\}\right)^\alpha
\leq (1-\alpha)^{-1}(\lambda_{m+1}^\eps)^{\alpha-1}+(\lambda_{m+1}^\eps)^{\alpha}(\lambda_{m+1}^\eps-\lambda_m^\eps)^{-1}$$
which is uniformly bounded as $\eps\to 0$. Then, the second term is bounded by $C(|\bar p^0|+1)\tau(\eps)$ with $C$ a constant independent of $\eps$.  Similar estimate is obtained for the third term: it will be bounded by $C\rho(\eps)$ with $C$ a constant independent of $\eps$. 

For the fourth term 
$$\int_{-\infty}^0 e^{\lambda_{m+1}^\eps s}\left(\max\{\lambda_{m+1}^\varepsilon, \frac{\alpha}{s}\}\right)^\alpha
\leq (1-\alpha)^{-1}(\lambda_{m+1}^\eps)^{\alpha-1}+(\lambda_{m+1}^\eps)^{\alpha-1}\leq 2(1-\alpha)^{-1}(\lambda_{m+1}^\eps)^{\alpha-1}.$$
Which implies that , 

$$ L_F\|E\Phi_0-\Phi_\varepsilon\|_\infty \int_{-\infty}^0 e^{\lambda_{m+1}^\eps s}\left(\max\{\lambda_{m+1}^\varepsilon, \frac{\alpha}{s}\}\right)^\alpha ds\leq 2L_F(1-\alpha)^{-1}(\lambda_{m+1}^\eps)^{\alpha-1}\|E\Phi_0-\Phi_\varepsilon\|_\infty$$

The first term need to be estimated with the aid of Lemma \ref{distanciapes}. Notice that,   
$$4L_F\int_{-\infty}^0 e^{\lambda_{m+1}^\eps s}\left(\max\{\lambda_{m+1}^\varepsilon, \frac{\alpha}{s}\}\right)^\alpha \|p_\eps(s)-Ep_0(s)\|_{X_\eps^\alpha}ds\leq$$
$$\leq \frac{L_F}{3} \|E\Phi_0-\Phi_\varepsilon\|_{\infty} \int_{-\infty}^0 e^{(\lambda_{m+1}^\eps-\lambda_m^\eps-4L_F(\lambda_m^\eps)^\alpha) s}\left(\max\{\lambda_{m+1}^\varepsilon, \frac{\alpha}{s}\}\right)^\alpha ds+$$
$$+4L_F \rho(\eps)  \int_{-\infty}^0 e^{(\lambda_{m+1}^\eps-\lambda_m^\eps-4L_F(\lambda_m^\eps)^\alpha) s}\left(\max\{\lambda_{m+1}^\varepsilon, \frac{\alpha}{s}\}\right)^\alpha ds+$$
$$+4K_2L_F  \tau(\eps) \int_{-\infty}^0 e^{(\lambda_{m+1}^\eps-\lambda_m^\eps-4L_F(\lambda_m^\eps)^\alpha) s}\left(\max\{\lambda_{m+1}^\varepsilon, \frac{\alpha}{s}\}\right)^\alpha (|s|+e^{-2 s})ds$$

With similar arguments as above, the last two terms are bounded by $C\rho(\eps)$  and $C\tau(\eps)$ with $C$ a constant independent of $\eps$. 

The first term is bounded by 
$$\frac{L_F}{3} \|E\Phi_0-\Phi_\varepsilon\|_{\infty}\left((1-\alpha)^{-1}(\lambda_{m+1}^\eps)^{\alpha-1}+\frac{(\lambda_{m+1}^\eps)^\alpha}{ \lambda_{m+1}^\eps-\lambda_m^\eps-4L_F(\lambda_m^\eps)^\alpha}\right)$$

Putting all these estimates together, we have

$$
\|I_1\|_{X_\eps^\alpha}\leq \Big[ 2L_F(1-\alpha)^{-1}(\lambda_{m+1}^\eps)^{\alpha-1}$$
$$+\frac{L_F}{3} \left((1-\alpha)^{-1}(\lambda_{m+1}^\eps)^{\alpha-1}+\frac{(\lambda_{m+1}^\eps)^\alpha}{ \lambda_{m+1}^\eps-\lambda_m^\eps-4L_F(\lambda_m^\eps)^\alpha}\right)\Big]\|E\Phi_0-\Phi_\varepsilon\|_{\infty}+$$
$$+C(|\bar p^0|+1)\tau(\eps)+C\rho(\eps)$$
$$\leq \left(3L_F(1-\alpha)^{-1}(\lambda_{m+1}^\eps)^{\alpha-1}+\frac{L_F(\lambda_{m+1}^\eps)^\alpha}{ \lambda_{m+1}^\eps-\lambda_m^\eps-4L_F(\lambda_m^\eps)^\alpha}\right)\|E\Phi_0-\Phi_\varepsilon\|_{\infty}$$
$$+C(|\bar p^0|+1)\tau(\eps)+C\rho(\eps)\leq \frac{1}{2}\|E\Phi_0-\Phi_\varepsilon\|_{\infty}+C(|\bar p^0|+1)\tau(\eps)+C\rho(\eps)$$
where we have used \eqref{gaps-eps}. 

Now we estimate $I_2$.

$$\|I_2\|_{X_\eps^\alpha}\leq \int_{-\infty}^0 \| \left(e^{A_\varepsilon  s}\mathbf{Q}^\eps_m- Ee^{A_0 s}\mathbf{Q}^0_m\right)\|_{\mathcal{L}(X_0, X^\alpha_\varepsilon)}\|\tilde F_0\|_{X_0}ds$$
$$\leq \int_{-\infty}^0 C_5 e^{-(\lambda_{m+1}^0-1) t}l_\varepsilon^\alpha(t)C_Fdt\leq \frac{2C_5C_F}{1-\alpha}\tau(\eps)|\log(\tau(\eps))|$$
where we have used  Lemma \ref{estimacionsemigruposlinealesproyectados} and Lemma \ref{lepsilon}. 

Putting together the estimates for $I_1$ and $I_2$, we get

$$\|\Phi_\eps(\bar p^0)-E\Phi_0(\bar p^0)\|_{X_\eps^\alpha}\leq \frac{1}{2}\|\Phi_\varepsilon-E\Phi_0\|_{\infty}+C(|\bar p^0|+1)\tau(\eps)$$
$$+C\rho(\eps)+ \frac{2C_5C_F}{1-\alpha}\tau(\eps)|\log(\tau(\eps))|$$

Now since $\Phi_\eps$ and $\Phi_0$ are of compact support, we take the sup norm for $\bar p^0$ with $|\bar p^0|\leq R$, where $R$ is an upper bound of the support of all inertial manifolds and obtain

$$\|\Phi_\varepsilon-E\Phi_0\|_{\infty}\leq \frac{1}{2}\|E\Phi_0-\Phi_\varepsilon\|_{\infty}+C(R+1)\tau(\eps)+C\rho(\eps)+ \frac{2C_5C_F}{1-\alpha}\tau(\eps)|\log(\tau(\eps))|$$
which implies that
$$\|\Phi_\varepsilon-E\Phi_0\|_{\infty}\leq C(\rho(\eps)+ \tau(\eps)|\log(\tau(\eps))|)$$
which shows the theorem. 
\end{proof}

\section{Final Remarks}\label{comment}

\par\bigskip Let us consider now some general remarks about the results of this article.

\begin{re}
We have worked our results in a Hilbert space functional setting, but most of the results, ideas and techniques can be easily adapted to
the more general setting of Banach spaces.  

\end{re}

\begin{re}
Although we have considered the convergence of the inertial manifolds in the ``sup'' norm,  it is possible to analyze the convergence in
stronger norms.  Notice first that for fixed $\eps$, the results from \cite{ChaoBiaoCo,  ChaoLuSell}, see also \cite{Sell&You} guarantee that the inertial manifold  is smooth, as long as the nonlinearity of the equations is smooth enough.  Moroever, with similar arguments as the ones developed in this paper, one could also obtained the convergence of the inertial manifolds in stronger norms, like $C^1$ or $C^{1,\theta}$.  Moreover, some rates could also be obtained for this stronger norm. This is the subject of a future publication.

\end{re}

\begin{re}

As we mentioned in the introduction, one of the motivations for this work is the analysis of a reaction diffusion equation in thin domains, see \cite{Hale&Raugel3, Raugel}.  If we start with a thin $N$-dimensional domain which collapses appropriately into a one dimensional domain, the limit equation is of Sturm-Liouville type and we will be able to apply this result, obtaining rates of the convergence of inertial manifolds, which improve the existing ones in for instance \cite{Hale&Raugel3}. Moreover, we will use the estimates obtained in the present paper to get  better (almost optimal) estimates on the distance of the attractors for thin domains, see \cite{Arrieta-Santamaria-2}. 

\end{re}

\end{document}